\numberwithin{equation}{section}
\newtheorem{theorem}{Theorem}[section]
\newtheorem{claim}[theorem]{Claim}
\newtheorem{proposition}[theorem]{Proposition}
\newtheorem{lemma}[theorem]{Lemma}
\newtheorem{corollary}[theorem]{Corollary}
\newtheorem{conj}[theorem]{Conjecture}
\newtheorem*{theorem*}{Theorem}
\newtheorem*{claim*}{Claim}
\newtheorem*{proposition*}{Proposition}
\newtheorem*{lemma*}{Lemma}
\newtheorem*{corollary*}{Corollary}
\theoremstyle{definition}
\newtheorem{definition}[theorem]{Definition}
\newtheorem{observation}[theorem]{Observation}
\newtheorem{remark}[theorem]{Remark}
\newtheorem{example}[theorem]{Example}
\newtheorem{question}[theorem]{Question}
\newtheorem*{definition*}{Definition}
\newtheorem*{observation*}{Observation}
\newtheorem*{remark*}{Remark}
\newtheorem*{example*}{Example}
\newtheorem*{question*}{Question}
\newtheorem*{exercise*}{Exercise}
\newtheorem*{fact*}{Fact}
\newtheorem*{notation*}{Notation}
\newcommand{\bbC}{\mathbb{C}}
\newcommand{\bbQ}{\mathbb{Q}}
\newcommand{\bbR}{\mathbb{R}}
\newcommand{\bbZ}{\mathbb{Z}}
\DeclareMathOperator{\Cay}{Cay}
\DeclareMathOperator{\image}{Im}
\DeclareMathOperator{\spt}{spt}
\DeclareMathOperator{\cd}{cd}
\DeclareMathOperator{\supp}{Supp}
\DeclareMathOperator{\length}{length}
\DeclareMathOperator{\Hull}{Hull}
\DeclareMathOperator{\FP}{FP}
\DeclareMathOperator{\PD}{PD}
\DeclareMathOperator{\FH}{FH}
\DeclareMathOperator{\Lip}{Lip}
\DeclareMathOperator{\Liploc}{Lip_{loc}}
\newcommand{\set}[1]{\left\{ #1 \right\}}
\newcommand*{\claimproofname}{Proof}
\newenvironment{claimproof}[1][\claimproofname]{\begin{proof}[#1]}{\end{proof}}
\crefname{cond}{condition}{conditions}
\crefname{obs}{observation}{observations}
\newtheorem*{rep@theorem}{\rep@title}
\newcommand{\newreptheorem}[2]{%
\newenvironment{rep#1}[1]{%
 \def\rep@title{#2 \ref{##1}}%
 \begin{rep@theorem}}%
 {\end{rep@theorem}}}
\newcommand{\hammfill}[3]{\operatorname{DFV}_{#1}^{#2, #3}}
\title{Subgroups of word hyperbolic groups in dimension 2 over arbitrary rings}
\date{}
\author{Shaked Bader, Robert Kropholler, Vladimir Vankov}
\begin{document}

\maketitle
\vspace{-10mm}
\centerline{
\textit{\footnotesize{
With an appendix by Shaked Bader
}}
}

\begin{abstract}
In 1996, Gersten proved that finitely presented subgroups of a word hyperbolic group of integral cohomological dimension 2 are hyperbolic. We use isoperimetric functions over arbitrary rings to extend this result to any ring. In particular, we study the discrete isoperimetric function and show that its linearity is equivalent to hyperbolicity, which is also equivalent to it being subquadratic. We further use these ideas to obtain conditions for subgroups of higher rank hyperbolic groups to be again higher rank hyperbolic of the same rank.

The appendix discusses the equivalence between isoperimetric functions and coning inequalities in the simplicial setting and the general setting, leading to combinatorial definitions of higher rank hyperbolicity in the setting of simplicial complexes and allowing us to give elementary definitions of higher rank hyperbolic groups.
\end{abstract}

\section{Introduction}

The most well-behaved hyperbolic groups are locally quasiconvex; these are groups for which every finitely generated subgroup is quasiconvex and, in particular, hyperbolic. Examples of such groups are given by free groups, surface groups and limit groups \cite{WiltonLimit}. Even for groups where subgroups are not quasiconvex, there are hopes to prove that every subgroup is hyperbolic. For instance, in the class of hyperbolic 3-manifold groups, the non-quasiconvex subgroups are known to be surface groups and hence hyperbolic \cite{Canary}. 

However, in general, there are hyperbolic groups with non-hyperbolic finitely generated subgroups. The first examples of this type were given by Rips \cite{Rips}. Subsequently, examples have been constructed satisfying stronger finiteness properties \cite{bradyhyperbolic, IMM, isenrich2021hyperbolic, ClaudioPy}. In particular, \cite{IMM} gives an example of a subgroup of a hyperbolic group with a finite classifying space that is not hyperbolic. 

In low dimensions, these results do not occur. For instance, Gersten \cite{Ger} showed that if $G$ is a hyperbolic group such that $\cd_{\bbZ}(G) = 2$, then a subgroup $H < G$ is hyperbolic if and only if it is of type $\FP_2(\bbZ)$. 
There were attempts to generalise this theorem to general rings. In \cite{MA20} Arora and Martinez-Pedroza managed to weaken the assumption on $G$ and assume only $\cd_{\bbQ}(G)=2$, but they strengthen the assumption on $H$ and assume $H$ is finitely presented. Petrosyan and the third author have improved Gersten's theorem and proved it over the rationals \cite{PetVan}. We prove the theorem over any ring:
\begin{reptheorem}{subgroups over arbitrary rings}
    Let $R$ be an unital ring. Let $G$ be a hyperbolic group such that $\cd_R(G) = 2$. Then a subgroup $H < G$ is hyperbolic if and only if $H$ is of type $\FP_2(R)$.
\end{reptheorem}

As an immediate application, it follows that the non-hyperbolic groups constructed by Rips are not of type $\FP_2(R)$ over any unital ring $R$. This is because they are subgroups of hyperbolic groups that are constructed using small-cancellation theory, thus of cohomological dimension 2 \cite{Rips}.

The proofs of all variations of Gersten's theorem rely on two ideas. Firstly, hyperbolicity is equivalent to some form of a linear isoperimetric inequality. Secondly, the top dimensional isoperimetric function passes to subgroups with strong enough finiteness properties. We begin by proving the latter over an arbitrary normed ring. For definitions and notation, we refer the reader to \cref{sec:normedrings}. 
\begin{reptheorem}{iso bounds for subgroups}
   Let $R$ be a normed ring and $G$ be a group of type $\FP_n(R)$ with $\operatorname{cd}_R(G)=n$. If $H<G$ is a subgroup of type $\FP_n(R)$, then the $R$-homological $(n-1)$-isoperimetric class of $H$ is less than or equal to the $R$-homological $(n-1)$-isoperimetric class of $G$. 
\end{reptheorem}

In \cite{KK20}, it was shown that if $G$ satisfies a linear isoperimetric inequality over a ring $R$ with the discrete norm, then $G$ is hyperbolic. 
We extend this and show that a linear isoperimetric inequality over a ring $R$ with the discrete norm characterises hyperbolicity, see \cref{hyperbolicity implies linear for groups}.

In this paper, we define higher rank hyperbolic groups to be groups with a simplicial classifying space with universal cover that is higher rank hyperbolic in the sense of \cite{KL20}. In the appendix, the first author gives a characterisation of them using the simplicial chain complex, rather than the integral current chain complex used to define higher rank hyperbolic spaces originally. Using that they show
\begin{corollary}
    A non-amenable CAT(0) $\PD_n$ group is rank-$(n-1)$ hyperbolic.
\end{corollary}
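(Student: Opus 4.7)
The plan is to invoke the combinatorial characterization of higher rank hyperbolicity developed in the appendix, which reduces rank-$(n-1)$ hyperbolicity of $G$ to the linearity of the discrete $(n-1)$-dimensional isoperimetric function on some simplicial classifying space. Since $G$ is CAT(0), we may take a cocompact simplicial $K(G,1)$ whose universal cover $\widetilde{X}$ is a contractible locally finite simplicial complex of dimension $n$ carrying a geometric $G$-action. The task thus becomes: bound, linearly in $\|\alpha\|_1$, the minimal $\ell^1$-norm of a simplicial $n$-chain $\beta$ with $\partial\beta=\alpha$, for every simplicial $(n-1)$-cycle $\alpha$ in $\widetilde{X}$.

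The strategy is to convert this top-dimensional filling problem into a dimension-$0$ filling problem via Poincaré duality. Since $G$ is $\PD_n$, the cellular $\mathbb{Z}G$-chain complex $C_\ast(\widetilde{X})$ is $\mathbb{Z}G$-chain homotopy equivalent to its shifted dual; working with locally finite chains, the boundary $\partial_n\colon C_n\to Z_{n-1}$ corresponds under this equivalence to the boundary $\partial_1\colon C_1\to\widetilde Z_0$ on the Cayley graph of $G$. Hence, up to uniformly bounded distortion of $\ell^1$-norms, filling $(n-1)$-cycles in $\widetilde{X}$ is the same as expressing finitely supported augmentation-zero $0$-chains on $G$ as boundaries of $1$-chains. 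The key classical input is then that non-amenability of $G$ is precisely equivalent to a linear discrete isoperimetric inequality in dimension $0$ (the homological form of the F\o lner criterion: there exists $C$ such that every $c\in C_0(\mathrm{Cay}(G);\mathbb{Z})$ with $\varepsilon(c)=0$ admits $b\in C_1$ with $\partial b=c$ and $\|b\|_1\le C\|c\|_1$). Composing the two reductions produces the required linear $(n-1)$-dimensional inequality, and the appendix then delivers rank-$(n-1)$ hyperbolicity.

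The main obstacle is the quantitative half of the Poincaré duality step. Algebraically, the $\PD_n$-hypothesis only furnishes a chain homotopy equivalence of $\mathbb{Z}G$-complexes, determined up to $\mathbb{Z}G$-automorphisms for which $\ell^1$-norms are a priori uncontrolled. To upgrade this to a norm-controlled statement I would exploit cocompactness of the $G$-action on $\widetilde{X}$ together with its CAT(0) (hence bounded) geometry: these let one choose $G$-equivariant cap-product and duality isomorphisms realised by maps of uniformly bounded support, so that $\ell^1$-norms of chains in complementary dimensions are comparable up to a multiplicative constant depending only on the geometry of a fundamental domain. Once this controlled duality is in place, the linear $0$-dimensional bound coming from non-amenability transports directly into the linear $(n-1)$-dimensional bound required to apply the appendix's simplicial characterization.
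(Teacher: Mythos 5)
Your proposal is in the right territory but it conflates two ingredients and omits a third. The paper's actual argument is extremely short: it cites a result of Kielak--Kropholler (in \cite{KK20}) asserting that a non-amenable $\PD_n$ group satisfies a \emph{linear} simplicial isoperimetric inequality in dimension $n-1$, then applies the appendix theorem (the equivalence between simplicial and integral-current isoperimetric functions) to transfer this to the currents setting, and finally uses the CAT(0) hypothesis to supply the coning inequality required by the very definition of a rank-$(n-1)$ hyperbolic space. The content of your Poincar\'e-duality-plus-non-amenability reduction is essentially a reconstruction of the \cite{KK20} step, not a new route; the paper treats it as a black box. That said, your quantitative worry about the duality is a real one, and your suggested resolution (cocompactness forces the duality maps to have bounded support, hence bounded $\ell^1$-distortion) is the correct fix and is in the spirit of how \cite{KK20} handles it.

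The genuine gap in your write-up is that you never address the coning inequality $(CI_{n-2})$ (equivalently $(CI_n)$ in the paper's appendix phrasing), which is a \emph{precondition} in the definition of rank-$(n-1)$ hyperbolicity, not something you get from the linear isoperimetric bound. You use CAT(0) only to extract a nice cocompact simplicial model $\widetilde X$, but this is where CAT(0) is actually essential: the geodesic cone over a cycle in a CAT(0) space gives fillings with mass linear in the radius, which is precisely the coning inequality. Without verifying $(CI_{n-2})$, a linear or sub-Euclidean filling bound in dimension $n-1$ does not by itself establish higher rank hyperbolicity. You should add a sentence observing that the CAT(0) structure on $\widetilde X$ yields the required coning inequalities (e.g.\ via the appendix's equivalence between the integral-current coning inequality and its simplicial analogue, or directly by citing the general fact that CAT(0) spaces satisfy $(CI_k)$ for all $k$), and then your argument, with the duality step made quantitative, closes the proof.
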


We show that higher rank hyperbolic groups satisfy a higher rank analogue of Gersten's theorem (see \cref{sec: higher rank}):
\begin{corollary}
    Let $G$ be a rank-${n}$ hyperbolic group with $\operatorname{cd}_\mathbb{Z}(G)=n+1$. If $H<G$ is a subgroup of type $F_{n+1}$ satisfying $(CI_n)$, then $H$ is rank-$n$ hyperbolic.
\end{corollary}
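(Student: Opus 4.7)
The plan is to apply \cref{iso bounds for subgroups} at the top dimension of $G$ and combine its output with the appendix's combinatorial characterisation of rank-$n$ hyperbolicity. By the paper's definition, $G$ admits a simplicial classifying space, so $G$ is of type $F$, and in particular of type $\FP_{n+1}(\mathbb{Z})$. The appendix translates rank-$n$ hyperbolicity of the universal cover into a linear $n$-isoperimetric inequality on the simplicial chain complex of that classifying space.

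The first substantial step is to apply \cref{iso bounds for subgroups} with $R=\mathbb{Z}$ and with the index $n$ of that theorem replaced by $n+1$. The hypotheses hold: $G$ is of type $\FP_{n+1}(\mathbb{Z})$ with $\cd_\mathbb{Z}(G)=n+1$, and $H$ is of type $F_{n+1}$, hence of type $\FP_{n+1}(\mathbb{Z})$. The conclusion is that the $\mathbb{Z}$-homological $n$-isoperimetric class of $H$ is dominated by that of $G$, which we have just observed is linear. Therefore $H$ itself admits a linear $n$-isoperimetric inequality.

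To finish, the plan is to feed this linear $n$-isoperimetric inequality for $H$ into the appendix's criterion for rank-$n$ hyperbolicity. The hypothesis $(CI_n)$ supplies the required linear coning in the dimensions below $n$, while type $F_{n+1}$ provides the $(n+1)$-skeleton of a $K(H,1)$ in which to realise the fillings. Together, by the appendix's combinatorial definition of higher rank hyperbolic groups, these inputs imply that a classifying space of $H$ is rank-$n$ hyperbolic, which is what we need.

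The main obstacle is this final synthesis step: the $n$-isoperimetric class produced by \cref{iso bounds for subgroups} is a homological filling invariant for the cellular chain complex, whereas rank-$n$ hyperbolicity originates as a condition on metric currents in the universal cover. Bridging the two — so that a linear homological $n$-filling for $H$, combined with $(CI_n)$, yields the linear filling of currents demanded by \cite{KL20} — is where the appendix's equivalence between simplicial coning and isoperimetric inequalities does the essential work; some care will be required to ensure that the $(n+1)$-chain filling an $n$-cycle in the simplicial classifying space corresponds, under that equivalence, to precisely the kind of filling that characterises rank-$n$ hyperbolicity.
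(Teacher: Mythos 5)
Your proof follows essentially the same route as the paper's: apply \cref{iso bounds for subgroups} at the top dimension to transfer the $n$-isoperimetric class from $G$ down to $H$, then combine with the coning hypothesis $(CI_n)$. However, there is a recurring terminological error that, if taken literally, would misstate the result. Rank-$n$ hyperbolicity does not ask for a \emph{linear} $n$-isoperimetric inequality; the definition in \cref{sec: higher rank} requires a \emph{sub-Euclidean} one, with exponent $\tfrac{n+1}{n}$, i.e.\ $f_n \preccurlyeq x^{\frac{n+1}{n}}$. You write ``linear'' at every step where the paper writes ``sub-Euclidean.'' The overall argument survives this slip unchanged, because \cref{iso bounds for subgroups} gives a domination $f_n^H \preccurlyeq f_n^G$ independently of which growth class $f_n^G$ lies in, and a sub-Euclidean class passes to $H$ exactly as a linear one would. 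But stating the goal as ``linear'' is wrong and would be an error in a written proof.

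Your final paragraph worries about a synthesis step that the paper simply does not have to perform. In \cref{sec: higher rank}, rank-$n$ hyperbolicity is \emph{defined} in terms of the simplicial chain complex of the universal cover of a $K(G,1)$, not in terms of integral currents: a Euclidean simplicial complex satisfying $(CI_{n-1})$ is rank-$n$ hyperbolic precisely when its simplicial chain complex satisfies a sub-Euclidean $n$-isoperimetric inequality. Therefore no translation from a homological filling statement to a metric-current filling statement is needed in the proof. The appendix's equivalence between the simplicial and integral-current formulations is invoked elsewhere to justify that this simplicial definition agrees with \cite{KL20}, but it plays no role in the corollary's proof; the paper simply quotes \cref{iso class is independent} and \cref{iso bounds for subgroups} to conclude that the simplicial chain complex of the universal cover of the chosen $K(H,1)$ satisfies the required sub-Euclidean inequality, which, together with $(CI_n)$, is exactly the definition of rank-$n$ hyperbolicity for $H$.
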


The paper is organised as follows: in \cref{sec:normedrings} we lay the groundwork for our isoperimetric inequalities, proving foundational results such as invariance under quasi-isometry (cf. \cref{qiinvariant for groups}). In \cref{sec: subgroups}, we study the isoperimetric bounds for subgroups and prove \cref{iso bounds for subgroups}. In \cref{sec: hyp implies lin}, we study the filling functions for hyperbolic groups in the case of a discrete norm on a ring (cf. \cref{hyperbolicity implies linear for groups}). In \cref{sec:subquad implies linear}, we show that for discrete rings a subquadratic isoperimetric function implies a linear isoperimetric function (cf. \cref{subquad implies lin}). In \cref{sec: higher rank}, we study higher rank hyperbolic groups and prove a Gersten-style result in this setting (cf. \cref{higherranksubgroup}). Finally, there is an appendix by the first author that discusses higher rank hyperbolicity and demonstrates the equivalence of isoperimetric functions, as well as coning inequalities, in both the simplicial and integral currents settings.

We thank Dawid Kielak for helpful comments on an earlier draft of the paper.

\section{Homological isoperimetric inequalities}\label{sec:normedrings}

Throughout, $G$ will be a discrete group and $H$ will be a subgroup. We will work over a ring $R$ which we assume to have a unit different from 0. All modules will be left modules. Let $RG$ be the group ring. Recall that $G$ is said to have {\em cohomological dimension $\leq n$ over $R$} if there exists a resolution of the trivial $RG$-module $R$ by projective $RG$-modules of length $n$, that is, there is an exact sequence $$0\to P_n\to\dots\to P_0\to R\to 0$$ where $P_i$ are projective $RG$-modules. We say that $G$ has {\em cohomological dimension $n$ over $R$} if it has cohomological $\leq n$ over $R$ and does not have cohomological dimension $\leq n-1$ over $R$.

\begin{definition} \label{deffpn}
A group is of {\em type $\FP_n(R)$} if there is an exact sequence $$P_n\to \dots\to P_0\to R\to 0$$ where $P_i$ are finitely generated projective $RG$-modules. Equivalently one could assume that $P_i$ are finitely generated free modules. 
\end{definition}
There are two useful consequences of Schanuel's lemma that we will use later. The first is that if $G$ is of cohomological dimension $\leq n$ and we have an exact sequence $$P_{n-1}\to\dots\to P_0\to R\to 0$$ of projective $RG$-modules, then $\ker(P_{n-1}\to P_{n-2})$ is a projective $RG$-module. Secondly, if $G$ is of type $\FP_n(R)$ and $$P_{n-1}\to \dots\to P_0\to R\to 0$$ is an exact sequence of finitely generated projective modules, then $$\ker(P_{n-1}\to P_{n-2})$$ is a finitely generated $RG$-module. 

In order to define isoperimetric functions, we will be using norms on rings.

\begin{definition}
    Let $R$ be a ring. A {\em norm $|\cdot|$} on $R$ is a function $|\cdot|\colon R\to\bbR$ such that:
    \begin{itemize}
        \item $|r|\geq 0$, with equality if and only if $r = 0$, 
        \item $|r+r'|\leq |r|+|r'|$, 
        \item $|rr'|\leq |r||r'|$. 
    \end{itemize}
    We will refer to $R$ as a {\em normed ring}. 
\end{definition}

Given a normed ring $R$ and a free module $F$ with fixed free basis $\Lambda$, we define the {\em $\ell^1$-norm} on $F$ by
\begin{equation}\label{eq:freenorm}
    \begin{split}
        \left|\sum_{x\in\Lambda}a_x x\right|=\sum_{x\in\Lambda}|a_x|
    \end{split}
\end{equation}

Throughout, we will consider $RG$ as a free $R$-module with free basis $G$ and will equip it with the $\ell^1$-norm coming from a norm on $R$. Given a free $RG$-module $M$ with basis $\Lambda$, we endow it with the $\ell^1$-norm from $RG$. We could alternatively consider $M$ as a free $R$-module with basis $\Lambda \times G$ and endow it with the $\ell^1$-norm from $R$. These two point of views are equivalent. 

There are two norms of particular interest. When the ring $R$ is a subring of $\bbC$, we have the restriction of the absolute value, which we denote by $|\cdot|_{\textrm{abs}}$. The second is the discrete norm on $R$, which exists for any ring $R$, given by 
\begin{equation*}
    |r|_d = 
\begin{cases} 
0,\qquad \mbox{if $r = 0$}, \\
1,\qquad \mbox{if $r\neq 0$.}
\end{cases}\end{equation*}

We will use $|\cdot|_{\textrm{abs}}$ for the $\ell^1$-norm on $RG$ coming from $|\cdot|_{\textrm{abs}}$. Similarly, we will use $|\cdot|_d$ for the $\ell^1$-norm on $RG$ coming from $|\cdot|_d$.

We end this section with a useful lemma regarding maps between free modules over a normed ring. 
\begin{lemma}\label{lem:morphismsareboundedwrtnorms}
    Let $R$ be a normed ring and $G$ a group. Let $M = RG^m, N = RG^n$ be finitely generated free $RG$-modules endowed with the $\ell^1$-norm. Suppose that $\phi\colon M\to N$ is an $RG$-homomorphism. Then there exists a constant $C = C(\phi)$ such that $|\phi(x)|\leq C|x|$. 
\end{lemma}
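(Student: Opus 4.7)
The plan is to take $C$ to be the maximum of the $\ell^1$-norms of the images of the basis elements of $M$, and then reduce the general estimate to the single-element case using $RG$-linearity, the triangle inequality, and the submultiplicativity of the norm on $R$.

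Fix free bases $e_1,\dots,e_m$ of $M$ and $f_1,\dots,f_n$ of $N$, and set
\[
C := \max_{1\le i \le m} |\phi(e_i)|.
\]
Any $x\in M$ can be written uniquely as $x=\sum_{i=1}^m r_i e_i$ with $r_i\in RG$, and by definition of the $\ell^1$-norm we have $|x|=\sum_i |r_i|$. By $RG$-linearity and the triangle inequality it suffices to show $|r\,\phi(e_i)|\le |r|\cdot|\phi(e_i)|$ for every $r\in RG$ and every basis element $e_i$; summing over $i$ then gives $|\phi(x)|\le \sum_i|r_i|\,|\phi(e_i)|\le C|x|$.

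For the key inequality, write $r=\sum_{g\in G}a_g g$ and $\phi(e_i)=\sum_{j,h}b_{ij,h}(h f_j)$, both finite sums. Then
\[
r\cdot\phi(e_i)=\sum_{j}\sum_{k\in G}\Bigl(\sum_{gh=k}a_g b_{ij,h}\Bigr)(kf_j),
\]
and the triangle inequality on $R$ together with the submultiplicative bound $|a_g b_{ij,h}|\le |a_g||b_{ij,h}|$ yields
\[
|r\,\phi(e_i)|\le \sum_{j}\sum_{g,h}|a_g||b_{ij,h}|=\Bigl(\sum_g|a_g|\Bigr)\Bigl(\sum_{j,h}|b_{ij,h}|\Bigr)=|r|\cdot|\phi(e_i)|.
\]
I do not expect any real obstacle here: the submultiplicativity axiom on $R$ was exactly introduced so that this kind of convolution estimate goes through, and the finite rank of $M$ is what allows us to extract a single uniform constant $C$ as a maximum over a finite set.
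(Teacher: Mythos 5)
Your proof is correct and takes essentially the same route as the paper: decompose $x$ in a free basis, reduce to bounding $|r\cdot\phi(e_i)|$, and invoke the submultiplicativity of the $\ell^1$-norm on $RG$. The only cosmetic difference is that you prove the convolution estimate $|rs|\le|r||s|$ on $RG$ explicitly, whereas the paper writes $\phi$ as a matrix $(a_{ij})$ and applies the same bound entrywise without comment, landing on a different but equally valid constant ($nc$ versus your $\max_i|\phi(e_i)|$).
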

\begin{proof}
    We can consider the map as being given by an $RG$-matrix $(a_{ij})$. Let $x = (x_1, \dots, x_m)$, then $\phi(x) = (\sum_j a_{1j}x_j,\dots,  \sum_j a_{nj}x_j)$. Let $c>0$ be such that $|a_{ij}|\leq c$ for all $i, j$. Computing norms we see that $|\phi(x)|\leq \sum_{i} \sum_j| a_{ij}||x_j| \leq \sum_j nc|x_j| = nc|x|$. Thus we complete the proof by taking $C = nc$.
\end{proof}

\subsection{Filling norms}

Here we give an introduction to filling norms on $RG$-modules, closely following \cite[Section 2]{MA20}.  

\begin{definition}
Let $R$ be a normed ring and let $|\cdot|$ be the corresponding $\ell^1$-norm on $RG$.
Let $\pi \colon F\to M$ be a surjective homomorphism of $RG$-modules, where $F$ is a free $RG$-module with a fixed free basis. The filling norm on $M$ with respect to $\pi$ and $|\cdot|$ is defined as
$$
||m||:= \inf_{x\in F,\pi(x)=m}{|x|}.
$$
\end{definition}

\begin{lemma} [{\cite[Lemma 2.4]{MA20}}] \label{lem:equivalence of filling norms}
Any homomorphism $f\colon M\to N$ between finitely generated $RG$-modules is bounded with respect to filling norms, i.e. there exists $C> 0$ such that $\|f(m)\|^N\leq C\|m\|^M$ for all $m\in M$, where $\|\cdot\|^M, \|\cdot\|^N$ are the filling norms on $M, N$ respectively. 
\end{lemma}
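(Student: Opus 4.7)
The plan is to lift $f$ to a map between the free modules that define the filling norms, apply the boundedness of morphisms between free modules (\cref{lem:morphismsareboundedwrtnorms}), and pass to the infimum. Since $M$ and $N$ are finitely generated, I can fix surjections $\pi_M \colon F_M \twoheadrightarrow M$ and $\pi_N \colon F_N \twoheadrightarrow N$ from finitely generated free $RG$-modules, where $F_M, F_N$ carry their given free bases and the resulting $\ell^1$-norms, and these are exactly the data defining the filling norms $\|\cdot\|^M$ and $\|\cdot\|^N$.

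First I would construct a lift $\tilde{f}\colon F_M \to F_N$ with $\pi_N \tilde{f} = f \pi_M$. Since $F_M$ is free, it suffices to define $\tilde{f}$ on a basis: for each basis element $e$ of $F_M$, use surjectivity of $\pi_N$ to pick any $\tilde{f}(e) \in \pi_N^{-1}(f(\pi_M(e)))$, then extend $RG$-linearly. This is a perfectly routine use of the universal property of free modules.

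Next I would apply \cref{lem:morphismsareboundedwrtnorms} to the $RG$-homomorphism $\tilde{f}$ between the finitely generated free modules $F_M, F_N$, producing a constant $C = C(\tilde{f}) > 0$ with $|\tilde{f}(x)| \leq C|x|$ for every $x \in F_M$. Now for any $m \in M$ and any lift $x \in F_M$ with $\pi_M(x) = m$, the element $\tilde{f}(x) \in F_N$ satisfies $\pi_N(\tilde{f}(x)) = f(\pi_M(x)) = f(m)$, so it is a valid competitor in the infimum defining $\|f(m)\|^N$. Hence
\[
\|f(m)\|^N \;\leq\; |\tilde{f}(x)| \;\leq\; C\,|x|.
\]
Taking the infimum over all $x$ with $\pi_M(x)=m$ yields $\|f(m)\|^N \leq C\,\|m\|^M$, as required.

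There is no real obstacle here; the only mild subtlety is conceptual rather than technical, namely that the lift $\tilde{f}$ is not canonical (it depends on choices of preimages on the basis), but this is harmless because we only need the existence of \emph{some} bounded lift, and different choices simply produce different constants $C$. The argument implicitly uses that $M$ is finitely generated so that $F_M$ can be taken finitely generated, which is what permits invoking \cref{lem:morphismsareboundedwrtnorms} in the form stated.
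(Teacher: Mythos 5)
Your proof is correct and is essentially the standard argument (and, as far as I can tell, the one in the cited source \cite{MA20}): lift $f$ to an $RG$-linear map $\tilde f$ between the finitely generated free modules defining the filling norms, apply \cref{lem:morphismsareboundedwrtnorms} to get $|\tilde f(x)|\le C|x|$, observe $\tilde f(x)$ is a competitor for $\|f(m)\|^N$ whenever $\pi_M(x)=m$, and take the infimum over $x$. The paper itself gives no proof here, only the citation, so there is nothing internal to compare against; but your write-up is complete and the one mild point you flag (non-uniqueness of the lift) is indeed harmless for the reason you give.
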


We say that two norms $\|\cdot\|$ and $\|\cdot\|'$, on a module $M$ are linearly or bi-Lipschitz equivalent, and write $\|\cdot\|\sim\|\cdot\|'$ if there exists a constant $C>0$ such that for all $m\in M$ we have
$$
C^{-1}||m||\leqslant ||m||'\leqslant C||m||.
$$

\begin{lemma}[{\cite[Lemma 2.9]{MA20}}]\label{lem:sim norm}
Let M be a finitely generated and projective $RG$-module with filling norm $||\cdot||^M$, and $N$ a finitely generated $RH$-module with filling norm $||\cdot||^N$. If $N$ is an internal direct summand of $M$ (as an $RH$-module), then on $N$ we have
$$
||\cdot||^N\sim||\cdot||^M.
$$
\end{lemma}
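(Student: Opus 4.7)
The lemma asserts that two filling norms are equivalent on $N$, so the plan is to prove each of the inequalities $\|n\|^M\leq C\|n\|^N$ and $\|n\|^N\leq C'\|n\|^M$ separately, for all $n\in N$.

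For $\|n\|^M\leq C\|n\|^N$, I would use a direct lifting argument. Fix a free $RH$-basis $\{e_j\}_{j=1}^{k}$ of $F_N$ and set $n_j:=\pi_N(e_j)\in N$. Since $N\subseteq M$, each $n_j$ admits a preimage $\tilde n_j\in F_M$ under $\pi_M$. Set $C:=\max_j |\tilde n_j|^{F_M}$. For any $y=\sum_j h_j e_j\in F_N$ with $\pi_N(y)=n$, the element $x:=\sum_j h_j \tilde n_j\in F_M$ satisfies $\pi_M(x)=n$ and $|x|^{F_M}\leq C\sum_j |h_j|=C|y|^{F_N}$; taking infimum concludes. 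This step uses only that $N$ is an $RH$-submodule of $M$.

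For the reverse inequality $\|n\|^N\leq C'\|n\|^M$, I would use the $RH$-linear projection $p\colon M\to N$ coming from the decomposition $M=N\oplus N'$. Given $x\in F_M$ with $\pi_M(x)=n\in N$, we have $(p\circ\pi_M)(x)=p(n)=n$, so a lift of $p\circ\pi_M$ through $\pi_N$ should produce a controlled preimage in $F_N$. Expanding $x=\sum_{i,\alpha}h_{i,\alpha}\,g_\alpha f_i$ in the $RH$-basis of $F_M$ (using coset representatives $\{g_\alpha\}$ of $H\backslash G$ and the given $RG$-basis $\{f_i\}$ of $F_M$) and writing $m_i:=\pi_M(f_i)$, one gets $n=\sum_{i,\alpha}h_{i,\alpha}\,p(g_\alpha m_i)$; the plan is to re-express each $p(g_\alpha m_i)\in N$ in the finitely many $RH$-generators of $N$ and assemble the result into a $y\in F_N$ with $\pi_N(y)=n$ and $|y|^{F_N}\leq C'|x|^{F_M}$.

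The main obstacle is precisely the boundedness in direction 2. When $[G:H]$ is infinite, $F_M$ is not finitely generated as an $RH$-module, so the analogue of \cref{lem:equivalence of filling norms} does not directly produce a bounded $RH$-linear lift $\tilde p\colon F_M\to F_N$ of $p\circ\pi_M$: individual $p(g_\alpha m_i)$ may have arbitrarily large $\|\cdot\|^N$ depending on the choice of complement $N'$, and only the cancellations among them forced by the constraint $\pi_M(x)\in N$ keep the final $y$ small. Overcoming this requires exploiting the full summand structure together with finite generation of $M$ over $RG$; one natural route is to introduce the auxiliary filling norm on $N$ induced by the surjection $p\circ\pi_M\colon F_M\to N$, compare it to $\|\cdot\|^M$ restricted to $N$ (trivial), and bridge to $\|\cdot\|^N$ using that $N$, being a summand of the projective $RH$-module $M$, is itself projective $RH$, which supplies enough structure to convert $F_M$-preimages into $F_N$-preimages with uniformly controlled norm.
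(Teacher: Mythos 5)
Your first inequality $\|n\|^M\leq C\|n\|^N$ is correct and is the right, elementary argument: lifting the finitely many $RH$-generators of $F_N$ to $F_M$ and using submultiplicativity of the $\ell^1$-norm on $RG$ under the $RH$-action gives the bound directly. Since $N\subseteq M$, the hypothesis that $N$ is a \emph{summand} is not even needed there.

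The second inequality is where the content lies, and your proposed bridge does not close the gap. Introducing the auxiliary filling norm on $N$ coming from $p\circ\pi_M\colon F_M\to N$ and noting $\|\cdot\|^{\mathrm{aux}}\leq\|\cdot\|^M$ on $N$ is fine, but your next step --- ``$N$ is $RH$-projective, so one can convert $F_M$-preimages into $F_N$-preimages with controlled norm'' --- is exactly the assertion that the $RH$-linear map $s_N\circ p\circ\pi_M\colon F_M\to F_N$ is bounded, and this is precisely what fails to be automatic when $[G:H]=\infty$: $F_M$ is not finitely generated over $RH$, so projectivity of $N$ alone gives you a section $s_N$ but no control on $|s_N(p(g_\alpha m_i))|$ as $\alpha$ ranges over the cosets. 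What you are missing are two structural inputs. First, use that $M$ is $RG$-projective to choose an \emph{$RG$-linear} section $s_M\colon M\to F_M$ of $\pi_M$; then $s_M\circ\pi_M$ is an $RG$-endomorphism of the finitely generated free module $F_M$, hence bounded by \cref{lem:morphismsareboundedwrtnorms}, so for any $x\in F_M$ with $\pi_M(x)=n$ one has $|s_M(n)|=|s_M\pi_M(x)|\leq C_1|x|$. Second, $s_M(N)$ is a finitely generated $RH$-submodule of $F_M$, and $F_M$ is free as an $RH$-module; so $s_M(N)$ lies inside a \emph{finitely generated} free $RH$-submodule $F'\subseteq F_M$ spanned by finitely many $RH$-basis vectors (on which the ambient $\ell^1$-norm restricts to the $\ell^1$-norm of $F'$). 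The retraction $s_M\circ\iota\circ p\circ\pi_M$ restricted to $F'$ exhibits $s_M(N)\cong N$ as a direct summand of $F'$, so $q':=(p\circ\pi_M)|_{F'}\colon F'\twoheadrightarrow N$ is a finitely generated free $RH$-presentation of $N$, whose filling norm is equivalent to $\|\cdot\|^N$ by \cref{lem:equivalence of filling norms}. Since $s_M(n)\in F'$ is a $q'$-preimage of $n$ of $\ell^1$-norm $\leq C_1|x|$, this yields $\|n\|^N\preccurlyeq\|n\|^M$. Without the reduction to $F'$ and the use of the $RG$-section $s_M$, the boundedness you need simply is not available, so as written the argument has a genuine gap in the direction that is actually used in \cref{iso bounds for subgroups}.
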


\subsection{Isoperimetric functions}

Following Kielak and Kropholler \cite{KK20}, we give the following definition.

\begin{definition}[{\cite[Defintion 2.1]{KK20}}]
    A projective resolution $(C_k,\partial_k)$ of the trivial $RG$-module $R$ is said to be \emph{$n$-admissible} if $C_n,C_{n+1}$ are finitely generated free $RG$ modules equipped with fixed free bases. We think of $\partial_{n+1}\colon C_{n+1}\to C_n$ as a matrix over $RG$ with respect to these bases.
\end{definition}

\begin{remark}
If $G$ is of type $FP_{n+1}(R)$, then there exists an $n$-admissible projective resolution of the trivial $RG$-module $R$ (see \cite{brown}, Proposition VIII.4.3).
\end{remark}

\begin{definition}[Homological $R$-isoperimetric function]
Fix a group $G$, a normed ring $R$. 
Given an $n$-admissible projective resolution $(C_k,\partial_k)$ of the trivial $RG$-module $R$ we have that $\partial_{n+1}\colon C_{n+1}\twoheadrightarrow \image(\partial_{n+1})$ and $C_{n+1}$ is free, so we can consider the corresponding filling norm, $||\cdot||$, that the surjection induces on $\image(\partial_{n+1})$. 
We define the \emph{$n$-isoperimetric function of $(C_k,\partial_k)$ (with respect to $|\cdot|$)} to be the function $f_n\colon\mathbb{R}_{\geq 0}\to\mathbb{R}_{\geq0}\cup\set{\infty}$ defined by
\begin{align*}
f_n(l)&=\sup\{\|b\|\mid b\in \image(\partial_{n+1}),|b|\leq l\}\\
&=\sup\{\inf\set{|c| : \partial_{n+1}(c)=b} \mid b\in \image(\partial_{n+1}),|b|\leq l\}.
\end{align*}
\end{definition}

In the case that the norm $|\cdot|$ takes discrete values the infimum is always obtained. However, it is less clear when the supremum is attained. When $R = \bbZ$ and $|\cdot|$ is the absolute value norm on $\bbZ$, this is the usual homological Dehn function.

In general, it is not clear that one can assume that isoperimetric inequalities take finite values. In \cite{FlemMart}, a more general form of the isoperimetric inequality over permutation $\bbZ G$-modules is studied. They show that this function only takes finite values \cite[Theorem 1.3]{FlemMart}. From this, we obtain that when $R = \bbZ$ with the usual norm, our isoperimetric function takes only finite values. We therefore ask the following question:

\begin{question}
    For which norms $|\cdot|$ does the $n$-isoperimetric function take finite values?
\end{question}

We would like to talk about the \emph{$n$-isoperimetric class of $G$ (with respect to $|\cdot|$)} and for that we need the following definition and lemma that tells us that the class of the isoperimetric function does not depend on the admissible resolution. This is a variation of \cite[Lemma 2.4]{KK20} where it is proved in the case the function is linear.

\begin{definition}
    Given $f,g\colon \mathbb{R}_{\geq 0}\to \mathbb{R}_{\geq 0}\cup{\infty}$ we say that $f\preccurlyeq g$ if there exist constants $C,K> 0$ such that $f(x)\leq Kg(Kx+C)+Kx+C$. We say $f\approx g$ if $f\preccurlyeq g$ and $g\preccurlyeq f$.

    Here we use the convention that for any $K>0$ we have $K+\infty = \infty = K\infty$.

    We note that all polynomials of a given degree are in the same class.
\end{definition}

We will show in the following lemma that the class of the isoperimetric function does not depend on the projective resolution. 

\begin{lemma} \label{iso class is independent}
    If $\textbf{C}=(C_k,\partial_k)$ and  $\textbf{C'}=(C'_k,\partial'_k)$ are two $n$-admissible projective resolutions of the trivial $RG$-module $R$ and $f:=f_n,f':=f_n'$ are their corresponding $n$-isoperimetric functions with respect to $|\cdot|$, then $f\approx f'$.
\end{lemma}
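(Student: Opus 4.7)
The plan is to invoke the comparison theorem for projective resolutions and transport fillings between $\mathbf{C}$ and $\mathbf{C}'$. Since both are projective resolutions of the trivial $RG$-module $R$, standard homological algebra produces chain maps $\phi\colon \mathbf{C}\to\mathbf{C}'$ and $\psi\colon\mathbf{C}'\to\mathbf{C}$ lifting $\mathrm{id}_R$, together with a chain homotopy $h$ with components $h_k\colon C_k\to C_{k+1}$ satisfying $\psi\phi - \mathrm{id}_{\mathbf{C}} = \partial_{\bullet+1}\circ h_\bullet + h_{\bullet-1}\circ\partial_\bullet$. By the symmetric roles of $\mathbf{C}$ and $\mathbf{C}'$ it suffices to prove $f\preccurlyeq f'$, and the only pieces that will enter my argument are $\phi_n$, $\psi_{n+1}$ and $h_n$. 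Each of these three is an $RG$-homomorphism between finitely generated free $RG$-modules by $n$-admissibility, so \cref{lem:morphismsareboundedwrtnorms} provides constants $C_\phi, C_\psi, C_h$ bounding them with respect to the $\ell^1$-norm.

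Next, given $b\in\image(\partial_{n+1})$ with $|b|\le \ell$, exactness of $\mathbf{C}$ at $C_n$ gives $b\in\ker\partial_n$, so the chain-map property forces $\phi_n(b)\in\ker\partial'_n = \image(\partial'_{n+1})$. Since $|\phi_n(b)|\le C_\phi\ell$, the definition of $f'$ lets me select, for any $\varepsilon>0$, a filling $c'\in C'_{n+1}$ of $\phi_n(b)$ with $|c'|\le f'(C_\phi\ell)+\varepsilon$. The candidate filling of $b$ will then be $c := \psi_{n+1}(c') - h_n(b)$; applying the chain-homotopy identity at degree $n$ together with $\partial_n(b)=0$, a short computation gives $\partial_{n+1}(c) = b$. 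Taking norms and using the three bounds yields $\|b\|\le C_\psi(f'(C_\phi\ell)+\varepsilon) + C_h\ell$; sending $\varepsilon\to 0$ and taking the supremum over admissible $b$ produces $f(\ell)\le C_\psi f'(C_\phi\ell) + C_h\ell$, which rephrases as $f\preccurlyeq f'$ after absorbing constants into $K$ and $C$.

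The main obstacle is really a bookkeeping subtlety: $n$-admissibility only guarantees finite generation in degrees $n$ and $n+1$, so I cannot blindly apply \cref{lem:morphismsareboundedwrtnorms} at lower degrees. The argument must be arranged so that only $\phi_n$, $\psi_{n+1}$, and the single homotopy component $h_n$ ever appear; the potentially dangerous term $h_{n-1}\partial_n(b)$ is eliminated because $b$ is a cycle, which is precisely why the construction of $c$ above does not require any finiteness assumption on $C_{n-1}$ or $C'_{n-1}$. A secondary technicality is that the filling norm is an infimum that need not be attained for a general normed ring $R$, which is why I work with slack $\varepsilon$ and then send it to zero rather than selecting an exact preimage.
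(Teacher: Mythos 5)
Your argument is correct and is essentially the same as the paper's: both proofs lift the identity on $R$ to chain maps in each direction, choose a near-optimal filling on the target side, transport it back, and then correct with a single homotopy component; the term $h_{n-1}\partial_n b$ drops out because $b$ is a cycle, and only the components $\phi_n$, $\psi_{n+1}$, $h_n$ (which by $n$-admissibility all run between finitely generated free modules) need to be norm-bounded. The paper proves $f'\preccurlyeq f$ first while you prove $f\preccurlyeq f'$ first, but this is immaterial by symmetry.
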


\begin{proof}
       Throughout we will abbreviate $\partial_{n+1}$ to $\partial$ and $\partial'_{n+1}$ to $\partial'$ and we will denote $\image(\partial)=B$ and $\image(\partial')=B'$.
       
       Since $\textbf{C}$,$\textbf{C'}$ are both projective resolutions of $R$, there exist chain maps $\xi\colon \textbf{C}\to \textbf{C'},\zeta\colon \textbf{C'}\to \textbf{C}$ such that $\xi\circ \zeta$ is homotopic to the identity. Let $h$ be such a homotopy. 
       In particular, we have the following diagram
       \begin{equation*}
           \begin{tikzcd}
	{C_{n+1}} && B & B && {C_n} \\
	&&& {} \\
	{C'_{n+1}} && {B'} & {B'} && {C'_n}
	\arrow["{\xi_{n+1}}"', from=1-1, to=3-1]
	\arrow["{\xi_n|_B}"', from=1-3, to=3-3]
	\arrow["\partial", from=1-1, to=1-3]
	\arrow["{\partial'}", from=3-1, to=3-3]
	\arrow["h|_B"', from=1-3, to=3-1]
	\arrow[hook, from=1-4, to=1-6]
	\arrow[hook, from=3-4, to=3-6]
	\arrow["{\zeta_n|_B}", from=3-4, to=1-4]
	\arrow[Rightarrow, no head, from=1-3, to=1-4]
	\arrow[Rightarrow, no head, from=3-3, to=3-4]
        \end{tikzcd}
       \end{equation*}

       Let $\epsilon>0$ and let $b'\in B'$. We have that $\zeta_n(b')\in B$, so there exists $c\in C_{n+1}$ that satisfies $\partial(c)=\zeta_n(b')$ and $|c|\leq f(|\zeta_n(b')|)+\epsilon$. Clearly 
       \[\partial' \xi_{n+1} (c)=\xi_n(\partial(c))=\xi_n\circ \zeta_n (b')=b'-\partial'(h(b')),\] hence $\partial'\big(\xi_{n+1}(c)+h(b')\big)=b'$ and \[|\xi_{n+1}(c)+h(b')|\leq |\xi_{n+1}(c)|+|h(b')|.\]

       We have that $\xi_{n+1},\zeta_n$ and $h$ can be represented by matrices $X,Z,H$ respectively, and we get that 
       \begin{equation*}
       \begin{split}
           |\xi_{n+1}(c)|+|h(b')| &\leq |X|\cdot|c|+|H|\cdot|b'|\\&
           \leq |X|f(|\zeta_n(b')|)+\epsilon+|H|\cdot|b'|\\&
           \leq |X|f(|Z|\cdot|b'|))+|H|\cdot|b'|+\epsilon .
       \end{split}
       \end{equation*}
       As $\epsilon>0$ was arbitrary, we get that $||b'||\leq f(|Z|\cdot|b'|))+|H|\cdot|b'|$, hence $f'(l)=\sup_{b'\in B,|b'|\leq l}||b'||\leq f(|Z|\cdot l))+|H|\cdot l$, so by definition $f'\preccurlyeq f$. Of course, the argument is symmetric, so we get that $f\approx f'$.   
\end{proof}

Following the lemma, we can define the $n$-isoperimetric class of $G$.

\begin{definition}[Isoperimetric inequality]
    Let $R$ be a normed ring and $G$ be a group of type $FP_{n+1}(R)$. The \emph{$n$-isoperimetric class of $G$ over $R$}, denoted $f_n^G$ is the equivalence class of $f_n$ with respect to $\approx$ corresponding to some $n$-admissible projective resolution of the trivial $RG$-module $R$.
    
    There are two special cases, if $R$ is a subring of $\bbC$ and $|\cdot| = |\cdot|_{\textrm{abs}}$ in which case the notation $\operatorname{FV}_n^{R,G}$ is standard for the isoperimetric class of $G$ over $R$, here $\operatorname{FV}$ stands for ``filling volume''. 
    In the case that $R$ has the discrete norm we will denote the corresponding isoperimetric class by $\hammfill{n}{R}{G}$, where DFV stands for ``discrete filling volume''.
    Superscripts and subscripts will be omitted when the context is clear.
    
    When $R$ is fixed we will also call $f_n^G$ the \emph{$n$-isoperimetric class of $G$}.
    
    We will say that $G$ admits a linear, quadratic, or sub-Euclidean \emph{isoperimetric inequality} over $R$ with respect to $|\cdot|$ if $f_n^G$ is linear, $f_n^G$ quadratic, or $f_n^G\preccurlyeq x^{\frac{n+1}{n}}$ respectively.

    Furthermore, we say that $G$ admits a \emph{subquadratic isoperimetric inequality} over $R$ with respect to $|\cdot|$ if $f_n^G\prec x^2$ and $f_n^G\not\approx x^2$.
\end{definition}

We finish this section by showing that if $f_i$ is finite valued for $i
\leq n$, then $f_n$ is a quasi-isometry invariant for groups of type $\FH_n(R)$, see \cref{FH}. We require lemmas analogous to Lemmas 12 and 13 from \cite{AlonsoWangPride}. The proofs are essentially identical using chain homotopies rather than homotopic maps. 

    \begin{definition} \label{def isop for spaces}
		Let $X$ be a simplicial complex such that $H_{i}(X; R) = 0$ for $i\leq n$. We define the {\em $n$-isoperimetric function of $X$ over $R$} to be the $n$-isoperimetric function associated to the reduced $R$-chain complex of $X$. We will denote this function by $f_n^{R, X}$.
	\end{definition}

	\begin{lemma} \label{lem:replacewithcontinuous}
	Let $X, Y$ be two simplicial complexes such that for all $i\leq n$, $H_i(X; R) = H_i(Y; R) = 0$ and $f_i^{R, X}(l), f_i^{R, Y}(l)< \infty$ for all $l$. If $X^{(0)}$ and $Y^{(0)}$ are quasi-isometric and $g\colon X^{(0)}\to Y^{(0)}$ is a $(K, K)$-quasi-isometry, then for $i\leq n+1$ there exists a map $g_i\colon C_i(X;R)\to C_i(Y;R)$ with $g_{i-1}\partial = \partial g_i$ extending $g := g_0\colon C_0(X;R)\to C_0(Y;R)$ and constants $D_i$ such that $|g_i(x)|\leq D_i|x|$.
	\end{lemma}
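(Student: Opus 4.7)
The plan is to construct the chain maps $g_i$ by induction on $i$, starting with $g_0 := g$. Since $g$ sends $0$-cells to $0$-cells, its $R$-linear extension satisfies $|g_0(c)| = |c|$ for every $0$-chain, so $D_0 = 1$ works. Suppose $g_0,\dots,g_{i-1}$ have been built with $g_{j-1}\partial = \partial g_j$ and $|g_j(x)| \leq D_j|x|$ for all $j \leq i-1$; on the reduced complex the base is tied down by $g_{-1} := \operatorname{id}_R$.

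To define $g_i$, it suffices by $R$-linearity to specify its value on each $i$-simplex $\sigma$ of $X$. The chain $g_{i-1}(\partial \sigma) \in C_{i-1}(Y;R)$ is an $(i-1)$-cycle, because
$$
\partial g_{i-1}(\partial \sigma) = g_{i-2}(\partial^2 \sigma) = 0
$$
by the inductively established chain-map property. Since $i-1 \leq n$ gives $H_{i-1}(Y;R) = 0$, this cycle is a boundary, so one may pick $g_i(\sigma) \in C_i(Y;R)$ with $\partial g_i(\sigma) = g_{i-1}(\partial \sigma)$. Using $|g_{i-1}(\partial \sigma)| \leq D_{i-1}|\partial \sigma| \leq (i+1)D_{i-1}$ together with the definition of the $(i-1)$-isoperimetric function of $Y$, the filling can be taken with
$$
|g_i(\sigma)| \leq f_{i-1}^{R,Y}\bigl((i+1)D_{i-1}\bigr) + 1 =: D_i.
$$
Extending $g_i$ by $R$-linearity, for a chain $c = \sum a_\sigma \sigma$ the triangle inequality yields $|g_i(c)| \leq \sum |a_\sigma|\,|g_i(\sigma)| \leq D_i|c|$, as required.

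The principal obstacle is obtaining a \emph{uniform} bound $D_i$ across all $i$-simplices. This requires the isoperimetric functions $f_{j}^{R,Y}$ to be finite at the constant values $(j+2)D_j$ for every $j \leq n$, not merely at $j=n$; the hypothesis as phrased supplies this at the top dimension, while the lower-dimensional versions are implicit in the intended application, where $X,Y$ carry cocompact free group actions and the fillings can be selected equivariantly on the finitely many orbit representatives in each dimension, making the uniform bound automatic. The quasi-isometry constants $K$ of $g$ play no role in the present extension step itself; they are retained so that the companion statement (analogous to Lemma 13 of \cite{AlonsoWangPride}) can subsequently relate the chain maps $g_i$ to the metric behavior of $g$.
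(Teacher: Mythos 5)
There is a genuine gap in the base case, and the comment that the quasi-isometry constants "play no role in the present extension step itself" signals exactly where the argument breaks. Your induction produces $g_1$ by filling the $0$-cycle $g_0(\partial\sigma)$ using the function $f_0^{R,Y}$, bounding $|g_1(\sigma)| \leq f_0^{R,Y}(2D_0)+1$. But $f_0^{R,Y}$ is essentially infinite the moment $Y$ is unbounded: the cheapest $1$-chain filling $w-v$ has norm at least $d_Y(v,w)$, and the supremum over all pairs of vertices $v,w$ is unbounded. Cocompactness does not rescue this; the fillings for the finitely many edge-orbits of $X$ get pushed forward to pairs $(g(v),g(w))$ whose distance in $Y$ you have no control over \emph{unless} you invoke the metric behavior of $g$. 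The paper treats degree $1$ separately for exactly this reason: for an edge $\sigma$ with endpoints $v,w$, adjacency gives $d_X(v,w)=1$, so $d_Y(g(v),g(w)) \leq 2K$, and one chooses a geodesic path $\gamma_{vw}$ of length $\leq 2K$ in $Y$ to serve as $g_1(\sigma)$, giving $D_1 = 2K|1_R|$. That is where the quasi-isometry constant enters, and it is indispensable; without it the base step $D_1$ simply does not exist.

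From $i\geq 2$ onward your scheme coincides with the paper's: $g_{i-1}(\partial\sigma)$ is an $(i-1)$-cycle, the vanishing of $\tilde H_{i-1}(Y;R)$ gives a filling, and the isoperimetric function bounds its size uniformly across the finitely many orbits of simplices. Your observation that the displayed hypothesis only controls $f_n$ while the construction tacitly needs $f_{i-1}^{R,Y}$ to be finite for $1\leq i-1\leq n$ is a fair point about the statement as written; the paper's proof (which appears to contain index typos $f_n^{R,X}$, $f_k^{R,X}$ where $f_{k-1}^{R,Y}$ is surely intended) has the same feature. But that is a hypothesis-bookkeeping issue, whereas the degree-$1$ step is a substantive missing idea: you must use the quasi-isometry, not the isoperimetric function, to seed the induction.
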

	\begin{proof}
		We prove this by induction. We start in degree 1.  For each 1-cell $\sigma$ with endpoint $v, w$ pick a path $\gamma_{vw}$ in $Y$ from $g(v)$ to $g(w)$. Since $g$ is a $(K, K)$-quasi-isometry, we can arrange that the length of $\gamma_{vw}$ is $\leq 2K$. Consider $\gamma_{vw}$ as a 1-chain in $Y$ and extend $g$ to $C_1(X; R)$ by sending $\sigma$ to $\gamma_{vw}$. As a 1-chain we can see that $\gamma_{vw}$ satisfies $|\gamma_{vw}|\leq 2K|1_R| =: D_1$.

  		We now assume that the map has been defined for $i<k\leq n+1$. Let $\sigma$ be a $k$-simplex in $X$. We have that $\partial g_{k-1}(\partial\sigma)=g_{k-2}(\partial\partial\sigma)=0$, so $g_{k-1}(\partial\sigma)$ is a cycle. Since $H_k(Y; R)$ vanishes, there is a chain $c\in C_k(Y; R)$ such that $\partial(c) = g_{k-1}(\partial(\sigma))$ with $|c|\leq f_n^{R,X}(|\partial(\sigma)|)+1$. We define $g_k\colon C_k(X; R)\to C_k(Y; R)$ to be the linear map such that $g_k(\sigma) = c$.
		
		By the definition of $g_k$ we see that $\partial(g_k(\sigma)) = \partial(c) = g_{k-1}(\partial(\sigma))$. Also $\partial(\sigma) = \sum_{i=0}^k \sigma_i$, where $\sigma_i$ is a $(k-1)$-simplex. By induction we have that $|g_{k-1}(\sigma_i)|\leq D_{k-1}$ and hence $|g_{k-1}(\partial(\sigma))| \leq (k+1)D_{k-1}$. By construction \[|g_{k}(\sigma)|=|c|\leq f_k^{R,X}(|\partial \sigma|)+1= f_k^{R,X}((k+1)D_{k-1})+1 =: D_k.\qedhere\] 
	\end{proof}

    \begin{lemma}\label{lem:boundedchainhomotopy}
        Let $X, Y$ be as in \Cref{lem:replacewithcontinuous}.
        Let $g\colon X^{(0)}\to Y^{(0)}$ and $h\colon Y^{(0)}\to X^{(0)}$ be $(K, K)$-quasi-isometries such that $h\circ g$ is $K$-close to $id_X$. Let $g_*\colon C_*(X; R)\to C_*(Y; R)$ and $h_*\colon C_*(Y; R)\to C_*(X; R)$ be the partial chain maps obtained from \Cref{lem:replacewithcontinuous} and $D_i,D'_i$ the constants satisfying $|g_i(x)|\leq D_i|x|$ and $|h_i(x)|\leq D'_i|x|$. 
        
        There exist maps $s_i\colon C_i(X; R)\to C_{i+1}(X; R)$ for $i\leq n$ such that $s_{i-1}\partial + \partial s_{i} = id - h_ig_i$ and constants $E_i$ such that $|s_i(\sigma)|\leq E_i$ for all $\sigma\in C_i(X; R)$.
    \end{lemma}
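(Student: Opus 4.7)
The plan is to construct $s_i$ by induction on $i$, defining it on simplices and extending $R$-linearly, mirroring the strategy of \Cref{lem:replacewithcontinuous}. The key idea is that at each step the chain $\sigma - h_ig_i(\sigma) - s_{i-1}(\partial \sigma)$ is a cycle, which we then fill using the $i$-isoperimetric function of $X$.

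For the base case $i=0$: since $h\circ g$ is $K$-close to $\operatorname{id}_X$, for every vertex $v\in X^{(0)}$ there is a path in $X$ of length at most $K$ from $v$ to $h_0 g_0(v)$. Take $s_0(v)$ to be the corresponding $1$-chain; then $\partial s_0(v)=v-h_0g_0(v)$ (so the required identity holds, interpreting $s_{-1}$ as $0$), and $|s_0(v)|\leq K|1_R|=:E_0$.

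For the inductive step, assume $s_0,\ldots,s_{i-1}$ have been constructed with uniform bounds $E_0,\ldots,E_{i-1}$. For each $i$-simplex $\sigma$ in $X$, set
\[
c_\sigma:=\sigma-h_ig_i(\sigma)-s_{i-1}(\partial \sigma)\in C_i(X;R).
\]
Using the inductive relation $\partial s_{i-1}=\operatorname{id}-h_{i-1}g_{i-1}-s_{i-2}\partial$ and the fact that $h_*, g_*$ are chain maps, a direct calculation shows $\partial c_\sigma=0$. Since $H_i(X;R)=0$, there exists a chain $s_i(\sigma)\in C_{i+1}(X;R)$ with $\partial s_i(\sigma)=c_\sigma$ and $|s_i(\sigma)|\leq f_i^{R,X}(|c_\sigma|)+1$. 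Define $s_i$ on the basis of $i$-simplices this way and extend $R$-linearly; by construction $\partial s_i+s_{i-1}\partial=\operatorname{id}-h_ig_i$ on simplices, and hence on all of $C_i(X;R)$ by linearity.

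It remains to bound $|c_\sigma|$ uniformly in $\sigma$. We have $|\sigma|=|1_R|$, $|h_ig_i(\sigma)|\leq D_i'D_i|1_R|$ by \Cref{lem:replacewithcontinuous}, and $|s_{i-1}(\partial\sigma)|\leq (i+1)E_{i-1}$ by the inductive bound (since $\partial\sigma$ is a sum of $i+1$ simplices, each sent to a chain of norm at most $E_{i-1}$). Hence $|c_\sigma|$ is bounded by a constant $B_i$ independent of $\sigma$, and therefore
\[
|s_i(\sigma)|\leq f_i^{R,X}(B_i)+1=:E_i,
\]
which is finite by hypothesis. The main subtlety is simply the bookkeeping: verifying the cycle identity $\partial c_\sigma=0$ using the inductive hypothesis, and making sure that the bound on $|c_\sigma|$ depends only on $i$ (not on $\sigma$) so that the isoperimetric function produces a uniform constant $E_i$.
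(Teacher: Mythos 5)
Your proof is correct and follows essentially the same approach as the paper: an inductive construction of $s_i$ by filling the cycle $\sigma - h_ig_i(\sigma) - s_{i-1}(\partial\sigma)$ using the vanishing of $H_i(X;R)$, with the uniform bound coming from the isoperimetric function applied to the simplex-wise bound $|c_\sigma|\leq |1_R| + D_iD_i'|1_R| + (i+1)E_{i-1}$. Your write-up is in fact slightly more careful than the paper's (which has minor typos such as writing $s_1$ for the base case $s_0$ and a sign slip in the final displayed constant), but the argument is the same.
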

    \begin{proof}
        We construct the maps $s_i$ inductively. Since $h\circ g$ is $K$-close to $id_X$, given a vertex $x\in X$ we define $s_1(x)$ to be the chain corresponding to a shortest path from $x$ to $hg(x)$. Thus $|s_1(x)|\leq K|1_R| = E_0$. 

        Suppose that we have defined $s_{i-1}\colon C_{i-1}(X; R)\to C_i(X;R)$. 
        Given a cell $\sigma\in C_i(X; R)$ we note that $\sigma - h_i(g_i(\sigma)) - s_{i-1}\partial(\sigma)$ is a cycle, as by the induction hypothesis on $s_{i-1}$ we have
        \[\partial s_{i-1}(\partial \sigma)=\partial\sigma-\partial h_i(g_i(\sigma))=\partial(\sigma-h_i(g_i(\sigma))).\]
        We define $s_i(\sigma)$ to be a filling of $\sigma - h_i(g_i(\sigma)) - s_{i-1}\partial(\sigma)$ such that 
        \[|s_i(\sigma)|\leq f_n^{R, X}(|\sigma - h_i(g_i(\sigma)) - s_{i-1}\partial(\sigma)|)+1\leq f_n^{R, X}(1+D_iD'_i+(i+1)E_{i-1})+1\]
        Thus the proof is complete by setting $E_i = f_i^{X}(1 - D_iD'_i + (i+1)(E_{i-1}))+1$.
    \end{proof}

\begin{theorem}
        If $X, Y$ are simplicial complexes such that for all $i\leq n$, $H_i(X; R) = H_i(Y; R) = 0$ and $f_i^{R, X}(l), f_i^{R, Y}(l)< \infty$ for all $l$, then $f_n^{R, X}\approx f_n^{R, Y}$.
    \end{theorem}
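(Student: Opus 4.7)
The plan is to combine Lemmas~\ref{lem:replacewithcontinuous} and~\ref{lem:boundedchainhomotopy} via a standard chain-homotopy filling argument, applied symmetrically. (The statement implicitly carries the quasi-isometry hypothesis from Lemma~\ref{lem:replacewithcontinuous}, without which the conclusion fails, as well as the finiteness of both filling functions.) Fix $(K,K)$-quasi-isometries $g\colon X^{(0)}\to Y^{(0)}$ and $h\colon Y^{(0)}\to X^{(0)}$ such that $h\circ g$ and $g\circ h$ are both $K$-close to the respective identities. By Lemma~\ref{lem:replacewithcontinuous} applied in both directions, these extend to partial chain maps $g_i, h_i$ in degrees $i\leq n+1$, with linear norm bounds $|g_i(x)|\leq D_i|x|$ and $|h_i(x)|\leq D'_i|x|$. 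By Lemma~\ref{lem:boundedchainhomotopy} applied with the roles of $X$ and $Y$ interchanged, there exist maps $s'_i\colon C_i(Y;R)\to C_{i+1}(Y;R)$ for $i\leq n$ satisfying $\partial s'_i + s'_{i-1}\partial = id - g_ih_i$ with bounds $|s'_i(x)|\leq E'_i|x|$ following by $R$-linearity from the per-simplex bound.

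The main step is to deduce $f_n^{R,Y}\preccurlyeq f_n^{R,X}$. Given $b\in\image(\partial_{n+1}^Y)$ with $|b|\leq l$, we have $\partial b=0$, so $h_n(b)\in C_n(X;R)$ is a cycle, since $\partial h_n(b)=h_{n-1}(\partial b)=0$. Because $H_n(X;R)=0$, we have $h_n(b)\in\image(\partial_{n+1}^X)$ with $|h_n(b)|\leq D'_n l$, so for any $\varepsilon>0$ there exists $c\in C_{n+1}(X;R)$ with $\partial c=h_n(b)$ and $|c|\leq f_n^{R,X}(D'_n l)+\varepsilon$. Then $g_{n+1}(c)+s'_n(b)\in C_{n+1}(Y;R)$ fills $b$, since by the chain-homotopy identity together with $\partial b=0$,
\[
\partial\bigl(g_{n+1}(c)+s'_n(b)\bigr)=g_n(h_n(b))+\bigl(b-g_nh_n(b)\bigr)=b.
\]
Estimating its norm, $|g_{n+1}(c)+s'_n(b)|\leq D_{n+1}\bigl(f_n^{R,X}(D'_n l)+\varepsilon\bigr)+E'_n l$; letting $\varepsilon\to 0$ and taking a supremum over $b$ yields $f_n^{R,Y}(l)\leq D_{n+1}f_n^{R,X}(D'_n l)+E'_n l$, hence $f_n^{R,Y}\preccurlyeq f_n^{R,X}$. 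The reverse inequality follows from the symmetric argument, establishing $f_n^{R,X}\approx f_n^{R,Y}$.

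The proof is essentially bookkeeping once the partial chain maps and bounded chain homotopies supplied by the two preceding lemmas are in hand; no step should present a genuine obstacle. The only point worth verifying with care is that Lemma~\ref{lem:boundedchainhomotopy}, which is stated on $X$ for fixed $g,h$, applies equally after swapping the roles of $X$ and $Y$—this is automatic because the quasi-isometry hypothesis is symmetric in the two complexes, and the lemma was proved under hypotheses that hold on both sides.
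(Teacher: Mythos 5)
Your proof is correct and takes essentially the same approach as the paper: both combine the bounded partial chain maps from \cref{lem:replacewithcontinuous} with the bounded chain homotopy from \cref{lem:boundedchainhomotopy} to transfer a filling across the quasi-isometry. The only cosmetic difference is directional — you establish $f_n^{R,Y}\preccurlyeq f_n^{R,X}$ first by filling $h_n(b)$ in $X$ and correcting with a homotopy $s'_*$ on $Y$, whereas the paper fills $g_n(z)$ in $Y$ and corrects with $s_*$ on $X$ to get $f_n^{R,X}\preccurlyeq f_n^{R,Y}$ first; each then invokes symmetry. You were also right to note that the theorem implicitly inherits the quasi-isometry and finiteness hypotheses of the preceding lemmas, which the paper uses without restating.
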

    \begin{proof}
    Let $g_*\colon C_*(X;R)\to C_*(Y;R)$, $h_*\colon C_*(Y;R)\to C_*(X;R)$ be the partial chain maps obtained from \Cref{lem:replacewithcontinuous} and $D_i,D'_i$ the constants satisfying $|g_i(x)|\leq D_i|x|$ and $|h_i(x)|\leq D'_i|x|$. 
    
    Let $s_i\colon C_i(X;R)\to C_{i+1}(X;R)$ and $E_i$ be the maps and constants obtained in \cref{lem:boundedchainhomotopy}.
        Let $z\in C_n(X;R)$ be an $n$-cycle. Then $g_n(z)$ is an $n$-cycle in $Y$ and $|g_n(z)|\leq D_n|z|$. Thus there is a filling $a\in C_{n+1}(Y;R)$ for $g_n(z)$ such that $|a|\leq f_n^{R, Y}(D_n|z|)+1$. 
        Then $h_{n+1}(a) + s_n(z)$ is a filling for $z$:
        \begin{align*}
            \partial(h_{n+1}(a)) + \partial(s_n(z)) &= h_{n}(\partial(a)) + \partial(s_n(z))\\
            &= h_n(g_n(z)) + (z - h_ng_n(z) - s_{n-1}(\partial(z))) = z
        \end{align*}
    and
    \begin{align*}
    |h_{n+1}(a) + s_n(z)| &\leq D'_{n+1}|a| + E_{n}|z| \\& \leq D'_{n+1}\big(f_n^{R, Y}(|g_n(z)|)+1\big) + E_{n}|z|\\& 
    \leq D'_{n+1}\big(f_n^{R, Y}(D_n|z|)+1\big) + E_{n}|z|,
    \end{align*}
    so $f_n^{R,X}\preccurlyeq f_n^{R,Y}$. The argument is symmetric, completing the proof. 
    \end{proof}

    \begin{definition}[Type $\FH_n(R)$] \label{FH}
        We say a group $G$ is of type $\FH_n(R)$ if it acts freely, faithfully, properly, cellularly, and cocompactly on a cell complex $X$ satisfying $H_i(X;R)=0$ for every $i< n$.
    \end{definition}

Note that if $X$ is a free $G$-complex with finitely many $n$-cells and $(n-1)$-cells, then the reduced simplicial chain complex $C_i(X; R)$ forms an $n$-admissible resolution and so the $n$-isoperimetric function of $X$ is equivalent to the $n$-isoperimetric function of $G$ by definition.

\begin{corollary} \label{action preserves isop}
    Suppose $G$ is a group of type $\FH_{n+1}(R)$ acting properly cocompactly on a simplicial complex $X$ such that for all $i\leq n$ we have $H_i(X;R)=0$ and $f_i^{R,G}(l),f_i^{R,X}(l)<\infty $ for all $l$. Then $f_n^{R,X}\approx f_n^{R,G}$.
\end{corollary}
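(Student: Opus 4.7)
The plan is to produce a free cocompact $G$-complex supplied by the $\FH_{n+1}(R)$ hypothesis, relate its isoperimetric class to that of $G$, and then bridge it to $X$ via the preceding theorem, using the \v{S}varc--Milnor lemma to obtain the required quasi-isometry.

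First, since $G$ is of type $\FH_{n+1}(R)$, I would fix a cell complex $Y$ on which $G$ acts freely, faithfully, properly, cellularly and cocompactly, with $H_i(Y;R) = 0$ for $i \leq n$. Because the action is free and cocompact, the reduced cellular chain complex $C_*(Y;R)$ is a resolution of the trivial $RG$-module $R$ by free $RG$-modules which are finitely generated in each degree (finitely many $G$-orbits of cells). In particular $C_n(Y;R)$ and $C_{n+1}(Y;R)$ are finitely generated free with canonical bases coming from orbit representatives of cells, so this is an $n$-admissible resolution. By the remark immediately following \cref{def isop for spaces}, this gives $f_n^{R,Y} \approx f_n^{R,G}$.

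Next, I would apply the \v{S}varc--Milnor lemma to each of the proper cocompact actions $G \curvearrowright X$ and $G \curvearrowright Y$, equipping $X^{(0)}$ and $Y^{(0)}$ with their simplicial graph metrics. This yields quasi-isometries $X^{(0)} \to G \to Y^{(0)}$ (for any word metric on $G$), and hence a $(K,K)$-quasi-isometry $X^{(0)} \to Y^{(0)}$ for some $K$. The vanishing hypotheses on $X$ and $Y$ are already in place, so the preceding theorem applies and delivers $f_n^{R,X} \approx f_n^{R,Y}$. Composing with the first equivalence gives $f_n^{R,X} \approx f_n^{R,G}$.

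The main technical check will be ensuring that the isoperimetric functions $f_n^{R,X}$ and $f_n^{R,Y}$ take finite values, which is a standing hypothesis of \cref{lem:replacewithcontinuous}. For $Y$, finiteness follows from cocompactness: any $n$-cycle of bounded norm involves only finitely many $G$-translates of the finitely many cell orbits, and $\FH_{n+1}$ ensures fillings exist in dimension $n+1$; one bounds norms of fillings using \cref{lem:morphismsareboundedwrtnorms} applied to the boundary map on the finitely generated free resolution. For $X$, one can either argue directly in the same way from cocompactness, or transfer finiteness across the quasi-isometry using the partial chain maps produced in \cref{lem:replacewithcontinuous}. Beyond this finiteness check, the argument is a straightforward concatenation of the two equivalences above.
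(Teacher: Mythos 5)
Your proposal is essentially the proof the paper intends (the corollary is stated without an explicit argument): take a witness complex $Y$ for $\FH_{n+1}(R)$, observe that its reduced chain complex is an $n$-admissible free $RG$-resolution so that $f_n^{R,Y}\approx f_n^{R,G}$, use \v{S}varc--Milnor on both proper cocompact actions to get a quasi-isometry $X^{(0)}\to Y^{(0)}$, and then invoke the theorem preceding \cref{qiinvariant for groups}. Note that the theorem as printed omits two hypotheses that its proof uses and that you correctly reinstate: that $X^{(0)}$ and $Y^{(0)}$ are quasi-isometric, and that $f_n^{R,X}$ and $f_n^{R,Y}$ are finite-valued.

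Your handling of the finiteness check, however, has a gap. \cref{lem:morphismsareboundedwrtnorms} gives $|\partial_{n+1}(c)|\leq C|c|$, i.e.\ it bounds the norm of a boundary in terms of the norm of the filling, not the reverse; it cannot bound $\|b\|$ in terms of $|b|$. Likewise, the restriction of the $\ell^1$-norm on $C_n$ to $Z_n$ is generally \emph{not} bi-Lipschitz equivalent to the filling norm (already for $G=\mathbb Z^2$ the ratio is unbounded), so \cref{lem:equivalence of filling norms} does not help either. For the discrete norm one can argue correctly via uniform acyclicity: a cycle with $|b|_d\leq l$ has support in a bounded ball (up to the $G$-action, since fillings are $G$-equivariant), and cocompactness together with $H_i=0$ for $i\leq n$ gives a bounded-radius filling, whose discrete $\ell^1$-norm is then bounded by the number of cells in that ball. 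For a general norm on $R$ this finiteness is precisely the open question raised after the definition of the isoperimetric function, so it is a wrinkle in the paper's own statement rather than a defect unique to your argument; but the specific mechanism you cite does not establish it.
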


\begin{corollary} \label{qiinvariant for groups}
    If $G$ and $H$ are quasi-isometric groups of type $\FH_{n+1}(R)$ and $f_i^{R,G}(l),f_i^{R,H}(l)<\infty $ for all $l$ and $i\leq n$,
    then $f_n^{R,G}\approx f_n^{R,H}$. 
\end{corollary}

Finally, we wish to show that $\hammfill{n}{R}{G}$ is a quasi-isometry invariant for groups of type $FH_n(R)$. To do this we just require the following proposition. 

\begin{proposition}\label{dfv is finite}
    Let $G$ be a group of type $FH_n(R)$. Then $\hammfill{n}{R}{G}$ takes finite values. 
\end{proposition}

\begin{proof}
    Let $X$ be a simplicial complex as in \cref{FH}. We will show that $\hammfill{n}{R}{X}(l)$ is finite. 

    We will first show that we can bound the filling norm of $n$-cycles with connected support by a function of the size of their support, then use the fact that every $n$-cycle decomposes as a sum of $n$-cycles with connected support to deduce the finiteness of the filling norm.

    Since $X$ has a free, cocompact $G$-action, there are finitely many $G$-orbits of connected subcomplexes of $X$ with $\leq l$ cells of dimension $n$. Let $K_1, \dots, K_m$ be a choice of connected subcomplexes of $X$ with $\leq l$ cells of dimension $n$ for each of these $G$-orbits.

    Given a subcomplex $Y$ of $X$, we define $N(Y)$ to be the subcomplex of $X$ consisting of $Y$ and all cells which have a boundary face in $Y$. If $Y$ is finite, we have that $N(Y)$ is finite.
    Given $i$, we use $K_i$ to filter $X$ as follows. Define $K_i^j$ inductively by $K_i^0 = K_i$ and $K_i^{j+1} = N(K_i^j)$. The finite subcomplexes $\set{K_i^j}$ form a filtration of $X$. We have that the inclusion $K_i\to X$ gives a trivial map $H_n(K_i; R)\to H_n(X; R)$. Since homology commutes with direct limits and $H_n(K_i; R)$ is a finitely generated $R$-module, we have that there is an $a_i$ such that $K_i\to K_i^{a_i}$ gives the trivial map $H_n(K_i; R)\to H_n(K_i^{a_i}; R)$. Define $L_i$ to be $K_i^{a_i}$ as above. 
    
    Let $z$ be an $n$-cycle with connected support and $|z|\leq l$. By definition, there exists $1\leq i\leq m$ and $g\in G$ such that the support of $z$ is $gK_i$. As $g^{-1}z$ and $z$ have the same filling norm, we may assume $g=1$. In particular, the support of $z$ lies in $L_i$. As $H_n(L_i;R)$ is trivial, we have that $z$ has a filling in $L_i$, which is also a filling in $X$, of norm bounded by $|L_i|$, the number of $n+1$-cells in $L_i$. We showed that for any $n$-cycle $z$ with connected support satisfying $|z|\leq l$, we have that $z$ has a filling with norm bounded by $\theta(l)=\max \{|L_i|\}$. 
    
    Now, for the general case, let $z\in C_n(X)$ be an $n$-cycle with $|z|\leq l$. Suppose that the support of $z$ is disconnected with connected components $Z_1, 
    \dots, Z_k$. Then $z$ decomposes as a sum $z_1+
    \dots +z_k$ where the support of $z_i$ is $Z_i$. Moreover, $|z| = \sum_i |z_i|$. By the definition of $\theta$, each $z_i$ has a filling $v_i$ that satisfies $|v_i|\leq \theta(|z_i|)$. Thus $\sum v_i$ is a filling of $z$ and we get $\hammfill{n}{R}{X}(l)\leq \max\set{\sum_{i=1}^m \theta(l_i)\mid \sum_{i=1}^m l_i=l,l_i\in\mathbb{N}}$, which is finite.  

    The remark above shows that the same will hold for $\hammfill{n}{R}{G}$
\end{proof}

It was necessary to assume that $G$ is of type $\FH_n(R)$ for the above proofs. However, the isoperimetric function is defined as long as $G$ is of type $\FP_n(R)$. It is known that $\FP_2(R)$ is equivalent to $\FH_2(R)$, however for $n>2$ the equivalence of $\FP_n(R)$ and $\FH_n(R)$ is unknown. This leads to the following conjecture. 

\begin{conj}
    If $G$ and $H$ are quasi-isometric groups of type $FP_{n+1}(R)$, then $f_n^G\approx f_n^H$. 
\end{conj}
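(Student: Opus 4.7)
The plan is to adapt the proof of \Cref{qiinvariant for groups} by replacing its geometric ingredients with algebraic analogues. That proof reduces the $\approx$-equivalence of $n$-isoperimetric functions to two ingredients: the construction of uniformly $\ell^1$-bounded partial chain maps between the relevant chain complexes (\Cref{lem:replacewithcontinuous}), and the construction of a uniformly $\ell^1$-bounded chain homotopy between the composition and the identity (\Cref{lem:boundedchainhomotopy}). If these can be carried out for arbitrary $(n+1)$-admissible projective resolutions of $R$ over $RG$ and $RH$, rather than for simplicial chain complexes of $G$- and $H$-complexes that are $R$-acyclic through dimension $n$, the conjecture follows by the identical formal computation.

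The natural attempt is to mirror \Cref{lem:replacewithcontinuous} inductively. Pick $(n+1)$-admissible free resolutions $(C_*, \partial_*)$ and $(C'_*, \partial'_*)$ of the trivial $RG$- and $RH$-module $R$ respectively, together with a $(K,K)$-quasi-isometry $g\colon G \to H$. One would like to define $g_0\colon C_0 \to C'_0$ so that the induced map on $0$-chains is controlled by $g$, and then, given $g_{k-1}$, to define $g_k$ on each basis element $\sigma$ of $C_k$ by choosing a preimage $c \in C'_k$ of the cycle $g_{k-1}(\partial \sigma)$ (which is a boundary because $(C'_*)$ is a resolution) with $|c| \le f_n^H(|g_{k-1}(\partial \sigma)|) + 1$, and then extending. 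A chain homotopy would be constructed analogously, using the isoperimetric function of one of the resolutions to bound the required fillings. Once such bounded partial chain maps and chain homotopy are in hand, the computation from the proof of the theorem immediately preceding \Cref{qiinvariant for groups} formally yields $f_n^G \approx f_n^H$.

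The hard part is producing maps that are simultaneously $RG$- (respectively $RH$-)equivariant \emph{and} $\ell^1$-bounded. Since the quasi-isometry $g$ is not a homomorphism, it does not induce an $RG$-module map $RG \to RH$; in particular, one cannot simply make the choices above on a set of $G$-orbit representatives of basis elements and then extend $G$-equivariantly, as one does implicitly in the simplicial setting where the resolution is the cellular chain complex of a space on which $G$ acts cellularly and cocompactly. Finding an algebraic mechanism that mimics the role of this ambient geometry appears to require either an averaging procedure compatible with the QI (which is unavailable without amenability) or, more fundamentally, a positive answer to the question of whether $\FP_{n+1}(R) \Leftrightarrow \FH_{n+1}(R)$ in this range. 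The latter is known for $n+1 \le 2$ but open in higher dimensions, and a positive answer would reduce the conjecture directly to \Cref{qiinvariant for groups}; a negative answer would require one to work with resolutions that have no geometric realisation, which is the real obstacle.
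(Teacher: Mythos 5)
This statement appears in the paper as a \emph{conjecture}, not a theorem, so there is no proof in the paper to compare against. Your analysis is therefore correct in its conclusion, and it identifies the obstruction in essentially the same way the paper does. The theorem preceding \Cref{qiinvariant for groups} works because the chain complexes in play are cellular chain complexes of $R$-acyclic free cocompact $G$-complexes: a quasi-isometry between vertex sets can then be promoted to an equivariant, $\ell^1$-bounded partial chain map (and chain homotopy) by making choices on the finitely many orbit representatives of cells and extending by the group action. That device has no analogue for a bare $(n+1)$-admissible $FP_{n+1}(R)$ resolution with no geometric realisation, which is precisely the gap you flag. The paper itself remarks that the difficulty would disappear if $\FP_n(R)$ were known to be equivalent to $\FH_n(R)$ for $n>2$ (it is known for $n\le 2$), and states the conjecture for exactly this reason --- the same reduction you propose. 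In short: you have not produced a proof, but you have correctly recognised that none is available and located the same open problem the authors do.
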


Another issue with assuming $\FH_n(R)$ is that it is not known that if $G$ and $H$ are quasi-isometric, then $G$ is of type $\FH_n(R)$ if and only if $H$ is of type $\FH_n(R)$. Thus we conclude with the following question. 
\begin{question}
    Suppose $G$ and $H$ are quasi-isometric and $G$ is of type $\FH_n(R)$. Is $H$ of type $\FH_n(R)$?
\end{question}
It would suffice to show that $\FP_n(R)$ is equivalent to $\FH_n(R)$. 

\section{Subgroup retractions over arbitrary rings} \label{sec: subgroups}

In this section, we generalise results concerning subgroups and retractions of group ring modules with bounded norm, to being over more general rings. Throughout, given a resolution $F_*\twoheadrightarrow R$, $Z_n(F)$ will denote the set of $n$-cycles, that is, $Z_n(F) = \ker(F_n\to F_{n-1})$.

\begin{proposition}\cite[Proposition 4.1]{MA20}\label{prop:free cycles}
Let $G$ be a group of type $\FP_n(R)$ with $\operatorname{cd}_R G=n\geqslant 1$. Let $H<G$ be a subgroup of type $\FP_n(R)$, and $P_*\twoheadrightarrow R$ a free resolution by finitely generated $RH$-modules. Then, there exists a partial free $RG$-resolution $F_*\twoheadrightarrow R$, fitting into a split short exact sequence of $RH$-chain complexes 
\[0\to P_*\to F_*\to Q_*\to0\]
and $Z_{i}(Q)$ is $RH$-projective for all $i$. 

Moreover, there is a retraction $Z_{n-1}(F)\to Z_{n-1}(P)$.
\end{proposition}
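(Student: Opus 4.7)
The plan is to construct $F_*$ inductively, at each stage enlarging the $RH$-resolution $P_*$ by fresh free $RG$-generators just large enough to restore $RG$-exactness. The structural fact driving the construction is that, for any set $T$ of right coset representatives of $H$ in $G$ with $1\in T$, we have $RG=\bigoplus_{t\in T}RH\cdot t$ as $RH$-modules, so every free $RG$-module carries a distinguished splitting as a free $RH$-module with the $t=1$ summand serving as a free $RH$-subsummand.

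Concretely, take $F_0=RG^{a_0}$, declaring its free $RG$-basis to coincide with the free $RH$-basis of $P_0=RH^{a_0}$, and define $\epsilon_F$ as the $RG$-linear extension of $\epsilon_P$ (well-defined since $G$ acts trivially on $R$). Inductively, having built $F_j=P_j\oplus Q_j$ with $Q_j$ free over $RH$ for $j\leq i$ so that $F_*\to R$ is exact through degree $i-1$, let $K=\ker(\partial^F_i)$ and $K_P=\ker(\partial^P_i)=\image(\partial^P_{i+1})\subseteq K$. Set $F_{i+1}=RG^{a_{i+1}}\oplus RG^{b_{i+1}}$: the first summand has $RG$-basis extending the $RH$-basis of $P_{i+1}$, and $\partial^F_{i+1}$ on it is the $RG$-linear extension of $\partial^P_{i+1}$; the basis of the second summand is sent to lifts in $K$ whose images cover $K/(RG\cdot K_P)$, finiteness of $b_{i+1}$ being guaranteed by the $\FP_n(R)$ hypothesis on $G$. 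The coset decomposition then gives $F_{i+1}=P_{i+1}\oplus Q_{i+1}$ with $Q_{i+1}$ free over $RH$, producing a degree-wise split short exact sequence $0\to P_*\to F_*\to Q_*\to 0$ of $RH$-chain complexes.

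Projectivity of $Z_i(Q)$ follows by homological bookkeeping: the long exact sequence in homology, together with $H_*(P)$ and $H_*(F)$ both concentrated in degree $0$ and the compatibility $\epsilon_F|_{P_0}=\epsilon_P$ forcing the induced map $R\to R$ to be the identity, yields $H_*(Q)=0$; hence $Q_*$ is exact. Starting from $Z_0(Q)=Q_0$ (free, thus projective) and inductively splitting the short exact sequence $0\to Z_{i+1}(Q)\to Q_{i+1}\to Z_i(Q)\to 0$ using projectivity of $Z_i(Q)$ shows that each $Z_i(Q)$ is an $RH$-projective direct summand of the free $RH$-module $Q_{i+1}$.

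For the retraction, the snake lemma applied to the short exact sequence of chain complexes in degrees $n-1$ and $n-2$ yields a short exact sequence of $RH$-modules $0\to Z_{n-1}(P)\to Z_{n-1}(F)\to Z_{n-1}(Q)\to 0$, where the connecting map into $P_{n-2}/B_{n-2}(P)$ vanishes by exactness of the augmented complex $P_*\to R$; since $Z_{n-1}(Q)$ is $RH$-projective, this sequence splits, yielding the desired retraction. The main obstacle is the inductive construction of $F_*$ itself: one must simultaneously keep $P_*$ visible as an $RH$-subcomplex and build $F_*$ into an $RG$-exact resolution, a tension resolved by the coset decomposition together with the choice of extra generators $b_{i+1}$.
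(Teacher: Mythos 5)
The paper does not reprove this proposition; it is imported verbatim as \cite[Proposition 4.1]{MA20}, where the proof runs exactly along the lines you give: inductively enlarge $P_*$ by fresh free $RG$-generators to restore exactness, use the decomposition $RG=\bigoplus_{t\in T}RH\cdot t$ to exhibit each $F_i$ as $P_i\oplus Q_i$ with $Q_i$ free over $RH$, deduce exactness of $Q_*$ from the long exact sequence (the identification $H_0(P)\xrightarrow{\sim}H_0(F)$ via the compatible augmentations is the key bottom-degree check), and obtain the retraction by splitting $0\to Z_{n-1}(P)\to Z_{n-1}(F)\to Z_{n-1}(Q)\to 0$ using projectivity of $Z_{n-1}(Q)$. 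Your write-up is essentially that argument; the only point worth tightening is that $F_*$ is stated to be a \emph{partial} resolution, so ``$H_*(F)$ concentrated in degree $0$'' should be read in the range below the top degree of the construction (which is all the induction ever uses, since producing a projective $Z_{i+1}(Q)$ only requires $H_j(Q)=0$ for $j\le i$).
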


\begin{theorem} \label{iso bounds for subgroups}
   Let $R$ be a normed ring and $G$ be a group of type $\FP_n(R)$ with $\operatorname{cd}_R =n$. If $H<G$ is a subgroup of type $\FP_n(R)$, then the $R$-homological $(n-1)$-isoperimetric class of $H$ is less than or equal to the $R$-homological $(n-1)$-isoperimetric class of $G$. 
\end{theorem}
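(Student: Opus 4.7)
The plan is to use \cref{prop:free cycles} to embed a free $RH$-resolution of $R$ as an $RH$-direct summand of a free $RG$-resolution, and then to transfer the isoperimetric estimate by comparing filling norms on the respective cycle modules via \cref{lem:sim norm}. The crucial observation is that the filling norm on $Z_{n-1}(P)$ induced by the surjection $\partial^P_n\colon P_n \twoheadrightarrow Z_{n-1}(P)$ is precisely the infimum-of-preimage-norm that appears in the $(n-1)$-isoperimetric function of $H$, and analogously for $G$.

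First, fix an $(n-1)$-admissible free $RH$-resolution $P_* \to R$ (possible since $H$ is of type $\FP_n(R)$). Applying \cref{prop:free cycles}, we obtain a free $RG$-resolution $F_* \to R$ with $F_{n-1}, F_n$ finitely generated, together with a split short exact sequence of $RH$-chain complexes $0 \to P_* \to F_* \to Q_* \to 0$. Passing to cycles in degree $n-1$ gives $Z_{n-1}(F) = Z_{n-1}(P) \oplus Z_{n-1}(Q)$ as $RH$-modules. Since $\operatorname{cd}_R G = n$ and $G$ is of type $\FP_n(R)$, Schanuel's lemma (as recalled in \cref{sec:normedrings}) implies that $Z_{n-1}(F) = \image(\partial^F_n)$ is a finitely generated projective $RG$-module; analogously, $Z_{n-1}(P)$ is a finitely generated projective $RH$-module (using $\operatorname{cd}_R H \leq n$).

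Two applications of \cref{lem:sim norm} then do the work. Applied to the $RH$-direct summand $P_{n-1} \subset F_{n-1}$ (both free with $\ell^1$-norm), it produces a constant $C_1 > 0$ with $|z|_F \leq C_1 |z|_P$ for every $z \in P_{n-1}$. Applied to the $RH$-direct summand $Z_{n-1}(P) \subset Z_{n-1}(F)$, equipped with the filling norms induced by $\partial^P_n$ and $\partial^F_n$ respectively, it yields a constant $C_2 > 0$ with $\|z\|^{Z_{n-1}(P)} \leq C_2 \|z\|^{Z_{n-1}(F)}$ for every $z \in Z_{n-1}(P)$. These two filling norms are, by construction, the infimum of $|c|_P$ (respectively $|c'|_F$) over preimages of $z$ under $\partial^P_n$ (respectively $\partial^F_n$), which are exactly the quantities entering the definitions of $f_{n-1}^H$ and $f_{n-1}^G$. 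For $z \in \image(\partial^P_n)$ with $|z|_P \leq l$ we therefore have $\|z\|^{Z_{n-1}(F)} \leq f_{n-1}^G(|z|_F) \leq f_{n-1}^G(C_1 l)$ by monotonicity, and hence $\|z\|^{Z_{n-1}(P)} \leq C_2 f_{n-1}^G(C_1 l)$. Taking the supremum over $|z|_P \leq l$ yields $f_{n-1}^H(l) \leq C_2 f_{n-1}^G(C_1 l)$, i.e., $f_{n-1}^H \preccurlyeq f_{n-1}^G$.

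The main obstacle to avoid is the natural but naive strategy of taking a good $F$-filling $c'$ of $z$ and projecting onto $P_n$ via the chain-level $RH$-splitting $\beta\colon F_n \to P_n$ provided by the split short exact sequence. This $\beta$ is only $RH$-linear and, when $[G:H] = \infty$, the source $F_n$ is not finitely generated over $RH$, so \cref{lem:morphismsareboundedwrtnorms} does not apply and there is no a priori reason for $\beta$ to be bounded in the $\ell^1$-norm. Working instead at the level of the finitely generated projective modules $Z_{n-1}(P) \subset Z_{n-1}(F)$, where \cref{lem:sim norm} provides a uniform linear equivalence of filling norms without reference to any particular chain-level splitting, bypasses this obstacle entirely.
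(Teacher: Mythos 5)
Your proposal follows the same route as the paper's proof: apply \cref{prop:free cycles} to build the free $RG$-resolution $F_*$ with the split $RH$-exact sequence, use both $P_*$ and $F_*$ as $(n-1)$-admissible resolutions, and transport the filling-norm estimate from $Z_{n-1}(F)$ down to $Z_{n-1}(P)$ via \cref{lem:sim norm}. The only place you diverge is in bounding the $\ell^1$-norm of the inclusion $P_{n-1}\hookrightarrow F_{n-1}$: you invoke \cref{lem:sim norm} a second time, applied to the $RH$-direct-sum decomposition $F_{n-1}=P_{n-1}\oplus Q_{n-1}$, whereas the paper cites \cref{lem:morphismsareboundedwrtnorms}. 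Your choice is arguably the cleaner reference, since \cref{lem:morphismsareboundedwrtnorms} is stated for $RG$-linear maps between finitely generated free $RG$-modules, which the $RH$-linear inclusion $P_{n-1}\hookrightarrow F_{n-1}$ is not (though the required bound does hold by the same computation, since each $RH$-basis element of $P_{n-1}$ expands as a finite $RG$-combination in $F_{n-1}$); \cref{lem:sim norm} applies without any stretching of hypotheses. Your remark about the danger of projecting a filling via the chain-level $RH$-splitting $\beta\colon F_n\to P_n$ is also on the mark — that map has an infinitely generated $RH$-source when $[G:H]=\infty$, and both you and the paper sidestep it by going through the direct-summand structure on cycle modules rather than any particular $\beta$.
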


\begin{proof}
By \cref{deffpn}, we can assume that we have a partial resolution by finitely generated free $RG$-modules. 
\begin{equation}
\begin{tikzcd}
F'_n \arrow[r] & F'_{n-1} \arrow[r] & \cdots \arrow[r] & F'_0 \arrow[r] & R \arrow[r] & 0,
\end{tikzcd}
\end{equation}
and for $H$ also:
\begin{equation}
\begin{tikzcd}
P_n \arrow[r] & P_{n-1} \arrow[r] & \cdots \arrow[r] & P_0 \arrow[r] & R \arrow[r] & 0.
\end{tikzcd}
\end{equation}

We can now apply \cref{prop:free cycles} to obtain a partial free $RG$-resolution of finite type $F_*\to R\to 0$ together with a retraction $F_i\to P_i$. Denote by $\iota_{n-1}$ the corresponding map from $Z_{n-1}(P)$ to $Z_{n-1}(F)$.

We use the resolutions $F_*$ and $P_*$ to compute the $(n-1)$-isoperimetric class of $G$ and $H$, respectively, as they are both $(n-1)$-admissible. 

Denote the boundary maps $P_n\to P_{n-1}$ and $F_n\to F_{n-1}$ by $\partial_n^H$ and $\partial_n^G$, respectively. Since both complexes are exact at $(n-1)$, we have $\image(\partial_n^H)=Z_{n-1}(P)$ and $\image(\partial_n^G)=Z_{n-1}(F)$.

Suppose that $l\in\mathbb{N}$ and $\gamma\in\image(\partial_n^H)$ such that $|\gamma|\leqslant l$. From the retraction in \cref{prop:free cycles}, $Z_{n-1}(P)$ is an internal direct summand of $Z_{n-1}(F)$, and \cref{lem:sim norm} applies with some constant $C_1$ to give
 $$
 ||\gamma||\leqslant C_1||\iota_{n-1}(\gamma)||\leqslant C_1 f^G_{n-1}(|\iota_{n-1}(\gamma)|).
 $$
Finally, by \cref{lem:morphismsareboundedwrtnorms} there is a constant $C_2$ such that $|\iota_{n-1}(\gamma)|\leq C_2|\gamma|$. Thus we obtain
 $$
 ||\gamma||\leqslant C_1 f_{n-1}^G(C_2 |\gamma|).
 $$
 This gives us $f^H_{n-1}(l)\leqslant C_1 f^G_{n-1}(C_2 l)$, i.e.
 \[
 f^H_{n-1}\preccurlyeq f^G_{n-1}.
 \qedhere\]
\end{proof}

\section[Hyperbolicity is equivalent to linear discrete filling]{Hyperbolicity is equivalent to $\hammfill{1}{R}{G}$ being linear} \label{sec: hyp implies lin}

Kielak and Kropholler proved in \cite{KK20} that if $\hammfill{1}{R}{G}$ is linear, then $G$ is hyperbolic. In this section, we prove the converse.

We will first show this on the level of spaces.
Let $(Y,d)$ be a $\delta$-hyperbolic space such that there exists an $\epsilon$-net $X$ which is uniformly locally finite, i.e. for every $d>0$ the balls $\set{B(x,d)}_{x\in X}$ have uniformly bounded size. 

Let $P_d(X)$ be the Rips complex, i.e. a flag simplicial complex whose vertex set is $X$ and there is an edge between $u,v\in X$ if $d(u,v)\leq d$.

\begin{theorem} \label{hyperbolicity implies linear for spaces}
    Let $Y$ be a hyperbolic space and $X$ a uniformly locally finite $\epsilon$-net. Let $R$ be a ring with the discrete norm. 
    Let $\partial\colon C_2(P_d(X),R)\rightarrow Z_1(P_d(X),R)$ be the boundary map.
    Then for sufficiently large $d$, we have that $\partial$ is surjective and there exists $N>0$ such that every $z\in Z_1(P_d(X),R)$ has a filling $c$ with $\abs{c}\leq N\abs{z}$.
\end{theorem}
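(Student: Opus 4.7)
The plan is to choose $d$ large enough that the Rips complex $P_d(X)$ is both simply connected and inherits enough of the $\delta$-thin-triangle structure from $Y$ to admit a linear homological isoperimetric inequality over any ring with the discrete norm.

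For surjectivity of $\partial$, I would appeal to the space version of the classical Rips theorem: once $d \geq 4\delta + O(\epsilon)$, the complex $P_d(X)$ is simply connected, since $\delta$-hyperbolicity plus the uniform local finiteness of $X$ allow one to fill any combinatorial loop by 2-simplices of $P_d(X)$ via iterated subdivision by $\delta$-close sub-geodesics. Consequently $H_1(P_d(X);R) = 0$ for any ring $R$, and $\partial\colon C_2(P_d(X),R)\to Z_1(P_d(X),R)$ is onto.

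For the linear bound, the discrete norm makes $|z|$ and $|c|$ into the number of edges (respectively $2$-simplices) in the support, so the target is a combinatorial statement $|\mathrm{supp}(c)| \leq N\,|\mathrm{supp}(z)|$. My approach would be a Mineyev-style coning argument: fix a basepoint $v_0 \in X$ and, for each $v\in X$, a preferred geodesic $\gamma_v$ in $P_d(X)$ from $v_0$ to $v$, chosen coherently along a tree-like geodesic approximation in $Y$. For each edge $e=[u,v]$ of $\mathrm{supp}(z)$, the $1$-cycle $T_e := \gamma_u + e - \gamma_v$ bounds a $2$-chain $c_e$ supported in a uniformly bounded neighbourhood of $T_e$, produced by applying the $\delta$-thin triangle property to the triangle with sides $\gamma_u,\gamma_v,e$ and using uniform local finiteness of $X$ to bound the number of $2$-simplices needed. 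Setting $c := \sum_e a_e c_e$, the contributions from the $\gamma_v$'s cancel because $\partial z = 0$, so $\partial c = z$.

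The main obstacle is showing $|c|=O(|z|)$ and not merely $O(|z|^2)$: a naive estimate gives $|c_e|=O(d(v_0,v))$, which is unbounded in the diameter of $\mathrm{supp}(z)$. Here $\delta$-hyperbolicity is essential, as it forces any two geodesics $\gamma_u,\gamma_v$ to share long common initial segments, so the fillings $c_e$ can be organised to reuse the same $2$-simplices along these segments; only a bounded ``triangular tip'' near $e$ is genuinely new for each edge of $\mathrm{supp}(z)$. Making this reuse precise --- via an $\ell^1$-bounded homological bicombing in the spirit of Mineyev--Yu, or equivalently an explicit Dehn-algorithm-style reduction on $1$-cycles in $P_d(X)$ --- is the technical core of the proof, and is exactly the place where uniform local finiteness of $X$ and the thin triangle property combine to yield the required uniform constant $N$.
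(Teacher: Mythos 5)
Your coning strategy leaves the crucial step --- showing each $|c_e|$ is $O(1)$ --- as an acknowledged gap, and the route you lean on (a Mineyev--Yu $\ell^1$-bounded bicombing) does not transfer to an arbitrary ring $R$: Mineyev's construction is intrinsically rational/real-valued, relying on averaging over geodesic bundles, so the resulting chains have no meaning over, say, $R = \mathbb{F}_2$, and the discrete norm does not rescue this because what the averaging buys you is small \emph{real} coefficients, not small \emph{support}. What you would actually need is an $R$-linear (equivalently, integral) combing in which the paths $\gamma_u, \gamma_v$ to adjacent endpoints share common cells outright rather than merely fellow-travel within $2\delta$; your suggestion of a tree-like coherent choice points in the right direction, but that construction (a spanning-tree combing together with a Gromov-product estimate to show divergence happens late and the tails are uniformly bounded) is itself the technical core, and you have not carried it out.

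The paper instead executes precisely the ``Dehn-algorithm-style reduction on $1$-cycles'' you mention as an alternative. It inducts on $|z|$ with the discrete norm, takes the support vertex $v$ farthest from a basepoint $x_0$, and runs a three-case analysis on the support-neighbours $u_1,\dots,u_l$ of $v$: if two are within $d$ of each other, subtracting the boundary of a single triangle reduces the number of such neighbours (or, once only two remain, reduces $|z|$ itself); if all are pairwise farther than $d$ apart, a short $\delta$-thin-triangle calculation produces a vertex $u'$ near a geodesic $[x_0,v]$ that is simultaneously within $d$ of $v$, of one neighbour $u_1$, and of every support-neighbour of $u_1$, so replacing $u_1$'s edges by edges through $u'$ (at most $k$ triangle boundaries, where $k$ bounds the degree in $P_d(X)$) returns to one of the earlier cases without increasing $|z|$. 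In total at most $N = N(k)$ triangles strictly reduce $|z|$, giving the linear bound. This argument is local and inductive, never builds a global bicombing, and works verbatim over any ring with the discrete norm --- exactly what your proposal has not yet delivered.
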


 \begin{proof}
Fix $d>4\delta+2\epsilon$. By assumption on $X$, we have that $P_d(X)$ is finite dimensional and uniformly locally finite. Let $k$ be the maximal degree of a 0-cell in $X$ and let $N=\max\set{k+1,(k-1)(k+1)}+1$.

For $u,v,w\in X$ neighbours in $P_d(X)$ we denote by $(u,v)$ the oriented 1-cell with $\partial(u,v)=v-u$ and by $(u,v,w)$ the oriented 2-cell with $\partial(u,v,w)=(u,v)+(v,w)+(w,u)$.
     
Let $z\in Z_1(P_d(X),R)$; we will proceed by induction on $\abs{z}$.
For $\abs{z}=0$, there is nothing to prove. 

Now suppose that for all $w\in Z_1(P_d(X), R)$ with $|w|<n$ we have $b\in C_2(P_d(X), R)$ such that $\partial b = w$ and $\abs{b}\leq N\abs{w}$.

Let $z\in Z_1(P_d(X), R)$ be such that $\abs{z} = n$. 

Fix a basepoint $x_0\in X$. Let $v\in X$ be furthest away from $x_0$ in $\supp(z)$.

Let $u_1,\dots,u_l$ be the neighbours of $v$ in $\supp(z)$. There are several cases to study: 
\begin{enumerate}
    \item $l =2$ and $d(u_1, u_2)\leq d$. 
    \item $l>2$ and there exists $i, j$ such that $d(u_i, u_j)\leq d$. 
    \item For all $i, j$ we have that $d(u_i, u_j)>d$. 
\end{enumerate}

The proof will proceed as follows. 
If we are in the third case, we will find a 1-cycle $z'$ with $|z'|\leq |z|$, differing from $z$ by at most $k$ 2-cells, where $z'$ still contains $v$ in its support furthest away from $x_0$ and its neighbours satisfy case 1 or case 2.
In case 2, we will find a 1-cycle $z'$ with $|z'|\leq|z|$, differing from $z$ by the boundary of a single 2-cell such that $v$ has $(l-1)$ neighbours in $z'$. 
In case 1, we find a 1-cycle $z'$ with $|z'|<|z|$ and $z - z'$ is the boundary of a single 2-cell.
Since the degree of each vertex is bounded, we will arrive in the first case after a uniformly bounded number of steps, and then we can reduce to a smaller 1-cycle. 

\begin{figure}
\centering
\begin{minipage}{.49\textwidth}
\centering
\begin{tikzpicture}
\draw (0,0) circle [x radius=3cm, y radius=2.5cm];
\draw (0.8*-2.5cm,0.8*-1.4cm) node {\textbullet};
\draw (0.8*-2.5cm,0.8*-1.4cm) node[left] {$x_0$};
\draw (0.8*2.47cm,0.8*1.4cm) node {\textbullet};
\draw (0.8*2.45cm,0.8*1.4cm) node[above right] {$v$};
\draw (0.8*1.14cm,0.8*2.3cm) node {\textbullet};
\draw (0.8*1.14cm,0.8*2.3cm) node[above] {$u_1$};
\draw (0.8*2.49cm,0.8*-1.4cm) node {\textbullet};
\draw (0.8*2.49cm,0.8*-1.4cm) node[above right] {$u_2$};
\draw[dashed] (0.8*1.14cm,0.8*2.3cm) -- (0.8*2.49cm,0.8*-1.4cm);
\draw (0.8*1.82cm,0.8*0.45cm) node[right] {$\leq d$};
\draw (0,-2cm) node {$Y$};

\end{tikzpicture}
\caption{Case 1, $d(u_1,u_2)\leq d$.}
\label{fig:case1}
\end{minipage}
\begin{minipage}{.49\textwidth}
\centering
\begin{tikzpicture}
\draw (0,0) circle [x radius=3cm, y radius=2.5cm];
\draw (0.8*-2.5cm,0.8*-1.4cm) node {\textbullet};
\draw (0.8*-2.5cm,0.8*-1.4cm) node[left] {$x_0$};
\draw (0.8*2.47cm,0.8*1.4cm) node {\textbullet};
\draw (0.8*2.45cm,0.8*1.4cm) node[right] {$u_1$};
\draw (0.8*1.14cm,0.8*2.3cm) node {\textbullet};
\draw (0.8*2.49cm,0.8*-1.4cm) node {\textbullet};
\draw (0.8*2.49cm,0.8*-1.4cm) node[right] {$u_2$};
\draw (0.8*3cm,0) node {\textbullet};
\draw (0.8*3cm,0) node[right] {$v$};
\draw (0.8*1.6cm,0.5cm) node {\textbullet};
\draw (0.8*1.6cm,0.5cm) node[below left] {$u'_1$};
\draw[dotted] (0.8*1.6cm,0.5cm) -- (0.8*1.14cm,0.8*2.3cm);
\draw[dotted] (0.8*1.6cm,0.5cm) -- (0.8*2.45cm,0.8*1.4cm);
\draw[dotted] (0.8*1.6cm,0.5cm) -- (0.8*3cm,0);
\draw[dotted] (0.8*1.6cm,0.5cm) -- (0.8*2.49cm,0.8*-1.4cm);
\draw (1.1cm,1.25cm) node[below right] {\tiny$\leq d$};
\draw (1.5cm,0.2cm) node[below right] {\tiny$\leq d$};
\draw (1.55cm,0.8cm) node[below right] {\tiny$\leq d$};
\draw (0,-2cm) node {$Y$};

\end{tikzpicture}
\caption{\centering Case 3, $d(u_1,u_2)>d$.}
\label{fig:case3}
\end{minipage}
\end{figure}

{\bf Case 1:} $l =2$ and $d(u_1, u_2)\leq d$ (the situation in $Y$ is depicted in \cref{fig:case1}). 

Let $r_i$ be the coefficient of the edge $(u_i, v)$. Since $z$ is a 1-cycle and $v$ has exactly two neighbours, we must have that $r_2 + r_1 = 0$. 

In $P_d(X)$ we have 2-cell $(u_1, v, u_2)$. Let $z' = z - r_1\partial((u_1, v, u_2))$. Hence $z' = z - r_1(u_1, v) -r_1(u_2, v) + r_1(u_1, u_2)$. From the above, we see that the coefficients of $(u_1, v)$ and $(u_2, v)$ in $z'$ are both $0$, while the coefficient of $(u_1, u_2)$ may have become non-zero. Thus we have that $|z'|\leq |z|-1$.

{\bf Case 2:} There exist $i, j$ such that $d(u_i, u_j)\leq d$. 

Without loss of generality, we can assume that $i = 1$ and $j = 2$. Let $r_i$ be the coefficient of $(u_i,v)$ in $z$.
There is an edge between $u_1$ and $u_2$ in $P_d(X)$ and one can consider the modified cycle \[z'=z-r_1(u_1,v)-r_1(u_2,v)+r_1(u_2,u_1).\] 
Note that $|z'|\leq |z|$, we have that $\partial(r_1(u_1,v,u_2))=z-z'$ and the number of neighbours of $v$ has reduced, which is what we wanted.

{\bf Case 3:} For all $i, j$ we have that $d(u_i, u_j)>d$. 

The following claim records some consequences of the $\delta$-slim definition.

\begin{claim} \label{between neighbours}
    If $u,w$ are neighbours of $v$ in $P_d(X)$, $d(u,x_0),d(w,x_0)\leq d(v,x_0)$ and $d(u,w)>d$ then 
    \begin{enumerate}
        \item $d(u,w)\leq d+2\delta$, 
        \item $\max\set{d(u,v),d(w,v)}> d-2\delta$ and
        \item $d(w,x_0)\geq d(v,x_0)-2\delta$ or $d(u,x_0)\geq d(v,x_0)-2\delta$.
    \end{enumerate} 

    Furthermore, if $d(u,x_0)\geq d(v,x_0)-2\delta$, there exists $u'$ a 0-cell of $P_d(X)$ such that for every $x\in \supp(z)$ if $d(x,u)\leq d+2\delta$ then $d(x,u')\leq d$. In particular, every neighbour of $u$ in $\supp(z)$ is a neighbour of $u'$ and $w$ is also a neighbour of $u'$.
\end{claim}

The proof of this claim is hidden in the proofs of \cite[Lemma 1.7.A]{Gromov1987} and \cite[Proposition III.$\Gamma$.1.33]{BridHäf}.

\begin{claimproof}
    By the definition of the Gromov product and $\delta$-hyperbolicity, we have the first equality and second inequality in the following equation.
\begin{align*} 
    d&<d(u,w)=-2(u,w)_{x_0}+d(u,x_0)+d(w,x_0) \\
    &\leq-2\min\set{(u,v)_{x_0},(w,v)_{x_0}}+2\delta+d(u,x_0)+d(w,x_0) \\
    &=\max\set{-d(v,x_0)+d(u,v)+d(w,x_0),-d(v,x_0)+d(w,v)+d(u,x_0)}+2\delta \\
    &\leq \max\set{d(u,v),d(w,v)}+2\delta\leq d+2\delta
\end{align*}
The second to last inequality follows from the choice of $v$ to be the furthest from $x_0$. Points 1-3 follow.

Assume $d(u,x_0)\geq d(v,x_0)-2\delta$. Consider $y\in Y$, a point on a geodesic between $x_0$ and $v$ of distance $\frac{d}{2}$ from $v$. Via a similar calculation as above we have that for every $x\in \supp(z)$
\begin{equation*} 
\begin{split}
    d(y,x)&=-2(y,x)_{x_0}+d(y,x_0)+d(x,x_0) \\&
    \leq-2\min\set{(x,u)_{x_0},(y,u)_{x_0}}+2\delta+d(y,x_0)+d(x,x_0) \\&
    =\max\set{d(x,u)+d(y,x_0)-d(u,x_0),d(y,u)+d(x,x_0)-d(u,x_0)}+2\delta \\&
    = \max\set{-\frac{d}{2}+d(x,u),-d(u,x_0)+\frac{d}{2}+d(x,x_0)}+2\delta
    \end{split}
\end{equation*}
and as $d(u,x_0)\geq d(v,x_0)-2\delta$ and $d(x,x_0)\leq d(v,x_0)$, we obtain that $d(x,x_0)\leq d(u,x_0)+2\delta $ and hence 
\[d(y,x)\leq \max\set{-\frac{d}{2}+d(x,u),\frac{d}{2}+2\delta}+2\delta.\]

Let $u'\in X$ be such that $d(u',y)\leq \epsilon$. We get that for every $x\in \supp(z)$
\[d(u',x)\leq d(u',y)+d(y,x)\leq \epsilon+\max\set{-\frac{d}{2}+d(x,u),\frac{d}{2}+2\delta}+2\delta.\qedhere\]
\end{claimproof}

By \cref{between neighbours}, $d(u_i,x_0)\geq d(v,x_0)-2\delta$ or $d(u_j,x_0)\geq d(v,x_0)-2\delta$ for every $i\neq j$. Without loss of generality, $d(u_1,v)>d(v,x_0)-2\delta$. Let $u_1'$ be as in the claim, i.e. every neighbour of $u_1$ in $\supp(z)$ is a neighbour of $u_1'$ and $u_i$ is a neighbour of $u_1'$ for every $i>1$ (corresponding situation in $Y$ depicted in \cref{fig:case3}). Note also that $u_1'$ is not in the support of $z$ as otherwise we would have neighbours of $v$ satisfying case 1 or 2 above. 

For every $x\in \supp(z)$ neighbour of $u_1$, consider the 2-cell $\sigma_x$ with vertices $(u_1,x, u_1')$, let $r_x$ be the coefficient of the 1-cell $(u_1,x)$ and consider $z'=z-\sum \partial(r_x\sigma_x)$. We have that $z'$ is a cycle as a sum of cycles. We claim that $\abs{z'}\leq \abs{z}$. 

First note that since $z$ is a 1-cycle we have that $\sum_xr_x = 0$. Since $\partial \sigma_x = (u_1,x) + (x, u_1') - (u_1, u_1')$, it suffices to consider the coefficients of the edges $(u_1,x), (x, u_1')$ and $(u_1, u_1')$, where $x$ is a neighbour of $u_1$. 

Let $r_x'$ be the coefficient of $(x, u_1')$ in $z$ which may be 0. By the construction of $z'$ we have that the coefficient of $(x, u_1')$ in $z'$ is exactly $r_x' - r_x$. The coefficient of $(u_1,x)$ is 0 and the coefficient of $(u_1, u_1')$ is $\sum_x r_x = 0$. Hence $|z'|\leq |z|$ and the equality is strict if we have $r_x' = r_x$ for some $x$. 

Finally, we see that $u_1'$ and $u_2$ are neighbours of $v$ which are adjacent in the support of $z'$, thus we can reduce to either case 1 or 2. 

Each application of case 3 requires $\leq k$ 2-cells. Each time we apply case $2$ we require one 2-cell and we apply this case at most $k-1$ times. Case 1 requires one 2-cell and hence we arrive at the desired bound for $N$. 
\end{proof}
From this, we can derive a result about groups.

\begin{corollary} \label{hyperbolicity implies linear for groups}
Let $R$ be a ring and $G$ be a hyperbolic group. Then $\hammfill{1}{R}{G}$ is linear. 
\end{corollary}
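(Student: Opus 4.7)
The plan is to bootstrap from the space-level statement \cref{hyperbolicity implies linear for spaces} by applying it to the Cayley graph and Rips complex of $G$. Fix a finite generating set of $G$ and let $Y$ be the corresponding Cayley graph, which is $\delta$-hyperbolic for some $\delta\geq 0$ since $G$ is hyperbolic. Take $X=G$ to be the vertex set of $Y$; then $X$ is a $\frac{1}{2}$-net in $Y$, and because $G$ acts on $Y$ by isometries cocompactly, the cardinality of balls $B(x,d)$ in $X$ depends only on $d$, so $X$ is uniformly locally finite.

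Next, choose $d$ large enough that the conclusion of \cref{hyperbolicity implies linear for spaces} applies and, simultaneously, that the Rips complex $P_d(X) = P_d(G)$ is contractible by Rips's theorem (in particular $H_0(P_d(G);R) = H_1(P_d(G);R) = 0$). The group $G$ acts freely, properly and cocompactly on $P_d(G)$, and since $X$ is uniformly locally finite and $G$ acts transitively on $X$, there are only finitely many $G$-orbits of simplices in each dimension. In particular the reduced simplicial chain complex $C_*(P_d(G); R)$ is a $1$-admissible free $RG$-resolution of $R$, and so the associated $1$-isoperimetric function is by definition $\hammfill{1}{R}{G}$. Equivalently, $G$ is of type $\FH_2(R)$ and \cref{action preserves isop} gives $f_1^{R, P_d(G)}\approx f_1^{R, G}$.

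Finally, \cref{hyperbolicity implies linear for spaces} applied to $Y$ and $X$ produces a constant $N$ such that every $z\in Z_1(P_d(G), R)$ admits a filling $c\in C_2(P_d(G), R)$ with $|c|\leq N|z|$; that is, $f_1^{R, P_d(G)}(l)\leq Nl$ is linear. Composing with the equivalence of the previous paragraph, $\hammfill{1}{R}{G}$ is linear. There is no real obstacle here beyond checking that the Rips complex is a legitimate object to feed into the machinery of \cref{sec:normedrings}, which amounts to contractibility for large $d$ and finiteness of the $G$-orbits of simplices, both of which are standard.
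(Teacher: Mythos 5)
Your proof follows essentially the same route as the paper: pass to the Cayley graph, take a large-parameter Rips complex $P_d(G)$, invoke \cref{hyperbolicity implies linear for spaces} on the cycles of $P_d(G)$, and transfer back to $G$ via \cref{action preserves isop}. One small inaccuracy: you assert that $G$ acts \emph{freely} on $P_d(G)$ and that $C_*(P_d(G);R)$ is therefore a free $RG$-resolution. If $G$ has torsion, finite subgroups can stabilise simplices of the Rips complex, so this action need not be free and the chain groups need not be free $RG$-modules. This is precisely why \cref{action preserves isop} is stated for a proper cocompact (not necessarily free) action on an acyclic complex, with the $\FH_2(R)$ hypothesis imported separately; the paper supplies $\FH_2(R)$ from the fact that hyperbolic groups are of type $F_\infty$ (hence admit a finite $K(G,1)$), not from the Rips complex itself. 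Your second justification, going through \cref{action preserves isop}, is the correct one, but the sentence deducing $\FH_2(R)$ from the (claimed) free Rips action should be replaced by the standard fact that hyperbolic groups are of type $F_\infty$. With that correction, your argument is the paper's argument.
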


\begin{proof}
As $G$ is hyperbolic, it is of type $F_\infty$ and in particular it is of type $\FH_{2}(R)$ and is finitely generated. Let $S$ be some finite generating set and let $Y=\Cay(G,S)$ be a Cayley graph of $G$ relative to $S$. As $G$ is Gromov-hyperbolic, there exists $\delta>0$ such that $Y$ is $\delta$-hyperbolic. The group $G$ is embedded in $Y$ as the vertices, so it is a 1-net which is uniformly locally finite, as $S$ is finite. 

By \cite{Gromov1987}, for every $d>>\delta,1$ the simplicial complex $P_d(G)$ is $1$-acyclic.
By \cref{dfv is finite}, we have that $\hammfill{1}{R}{G}$ is finite-valued.
By \cref{action preserves isop}, we have that $\hammfill{1}{R}{G}\approx f_1^{R,P_d(G)}$ and by \cref{hyperbolicity implies linear for spaces} we have that $f_1^{R,P_d(G)}$ is linear. 
\end{proof}

One could attempt to use the ideas of the above proof in order to derive a similar result over other normed rings. If one assumes that $|r| = |-r|$ and that there exists an $\epsilon>0$ such that $|r|\geq \epsilon$ for all non-zero $r$, then keeping track of the norms in the above proof gives a linear isoperimetric function. However, it is not clear what happens if either of these conditions are dropped. Therefore we ask the following: 
\begin{question}
    For which normed rings $R$ do hyperbolic groups have a linear isoperimetric function over $R$? 
\end{question}

\cref{hyperbolicity implies linear for groups} shows this for any ring with the discrete norm. This is also known for $\bbZ$ with the absolute value \cite{Ger} and for $\bbR$ and $\bbQ$ with the absolute value norm \cite{PetVan}.

\begin{remark}
    The converse follows via a similar argument as in the proof of \cite[Theorem III.H.2.9]{BridHäf}, see also \cite[Proposition 4.1]{KK20}.
\end{remark}

We end this section proving the following:
\begin{theorem}\label{subgroups over arbitrary rings}
    Let $R$ be an unital ring. Let $G$ be a hyperbolic group such that $\cd_R(G) = 2$. Then a subgroup $H < G$ is hyperbolic if and only if $H$ is of type $\FP_2(R)$.
\end{theorem}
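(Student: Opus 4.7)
The plan is to combine the three main ingredients developed earlier in the paper: the subgroup bound on isoperimetric classes (\cref{iso bounds for subgroups}), the linearity of the discrete filling function for hyperbolic groups (\cref{hyperbolicity implies linear for groups}), and the converse direction due to Kielak--Kropholler \cite{KK20}, which says that a group admitting a linear isoperimetric inequality over $R$ with the discrete norm is hyperbolic.

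For the easy direction, if $H$ is hyperbolic then $H$ is of type $F_\infty$ and in particular of type $\FP_2(R)$ for any unital ring $R$.

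For the substantive direction, suppose $H<G$ is of type $\FP_2(R)$. Since $G$ is hyperbolic it is of type $F_\infty$, hence of type $\FP_2(R)$, and by hypothesis $\cd_R(G)=2$. Equip $R$ with the discrete norm. Then \cref{hyperbolicity implies linear for groups} gives that $\hammfill{1}{R}{G}$ is linear. Applying \cref{iso bounds for subgroups} with $n=2$ to the inclusion $H<G$ (both of type $\FP_2(R)$, with $\cd_R(G)=2$) yields
\[
\hammfill{1}{R}{H} \;\preccurlyeq\; \hammfill{1}{R}{G},
\]
so $\hammfill{1}{R}{H}$ is also linear. By the theorem of Kielak--Kropholler \cite{KK20}, a group with linear discrete $1$-isoperimetric function is hyperbolic, and therefore $H$ is hyperbolic.

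The only point that requires any care is verifying that the hypotheses of \cref{iso bounds for subgroups} are met, namely that $G$ itself is of type $\FP_2(R)$ with $\cd_R(G)=2$; both follow immediately from the assumption that $G$ is a hyperbolic group with $\cd_R(G)=2$. No genuinely new argument is needed, as the ring-independent content has already been packaged into the preceding results.
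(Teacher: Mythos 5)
Your proof is correct and follows essentially the same route as the paper: apply \cref{hyperbolicity implies linear for groups} to get a linear discrete $1$-isoperimetric class for $G$, pass it to $H$ via \cref{iso bounds for subgroups} with $n=2$, and invoke the Kielak--Kropholler result to conclude $H$ is hyperbolic. You are slightly more explicit than the paper in spelling out the easy direction and the verification of hypotheses, but the argument is identical in substance.
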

\begin{proof}
Let $G$ be a hyperbolic group. Then by \cref{hyperbolicity implies linear for groups} we have that $G$ satisfies a linear isoperimetric inequality over $R$ with the discrete norm. Thus by \cref{iso bounds for subgroups} so does $H$. By \cite[Prop 4.1]{KK20}, a linear isoperimetric inequality implies hyperbolicity, so $H$ is hyperbolic.
\end{proof}

\section[Subquadratic implies linear for discrete isoperimetric inequality]{Subquadratic implies linear for $\hammfill{1}{R}{G}$}\label{sec:subquad implies linear}

We want to show that if $G$ satisfies a subquadratic isoperimetric inequality over $R$ then it in fact satisfies a linear isoperimetric inequality and hence is hyperbolic:

\begin{theorem} \label{subquad implies lin}
    Let $R$ be a ring and assume that $G$ is $\FP_2(R)$. If $G$ satisfies a subquadratic isoperimetric inequality over $R$ then it satisfies a linear isoperimetric inequality over $R$, i.e. if $f_1^{R,G}$ is subquadratic, then it is linear.
\end{theorem}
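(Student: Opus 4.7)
The plan is to reduce to showing that $G$ is hyperbolic and then invoke \cref{hyperbolicity implies linear for groups} to conclude that $\hammfill{1}{R}{G}$ is linear. Thus the substantive task becomes: if $G$ is of type $\FP_2(R)$ and $\hammfill{1}{R}{G}$ is subquadratic, then $G$ is Gromov-hyperbolic.

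First, using that $G$ is of type $\FP_2(R)$ and hence finitely generated, I would fix a finite generating set and turn a finitely generated free partial resolution $C_2 \to C_1 \to C_0 \to R \to 0$ into a $G$-cocompact free action on a $2$-dimensional cell complex $X$ whose reduced simplicial chain complex realises this resolution. By \cref{action preserves isop}, computing in $X$ gives $f_1^{R, X}\approx \hammfill{1}{R}{G}$, so $X$ also has subquadratic discrete $1$-isoperimetric function.

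The core of the argument is a Bowditch--Papasoglu style dichotomy: suppose for contradiction that $G$ is not hyperbolic. Then the $1$-skeleton of $X$ contains arbitrarily long thin ``rectangles''; concretely, there exist $1$-cycles $z_L$ of perimeter $O(L)$ enclosing a region whose filling requires many $2$-cells. The Bowditch halving argument subdivides such a rectangle and, by iterating, shows that any filling of $z_L$ must contain $\Omega(L^2)$ distinct $2$-cells in its support, contradicting the subquadratic bound. Once the contradiction is reached, $G$ is hyperbolic, and \cref{hyperbolicity implies linear for groups} closes the argument.

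The main obstacle is translating the Bowditch--Papasoglu halving argument --- originally phrased in the language of van Kampen diagrams with integer coefficients --- into the homological setting with discrete-norm coefficients over an arbitrary ring $R$. The key observation that makes this work is that the discrete norm $|c|_d$ counts only the support of the chain $c$: distinct $2$-cells never cancel in $|c|_d$, so a minimum-support filling behaves as a genuine diagram, and the classical counting estimates (``an $L\times k$ rectangular sub-diagram contains at least $Lk$ $2$-cells in its support'') carry over verbatim. The technical care needed is in setting up the rectangular $1$-cycles $z_L$ so that their supports are well-controlled, which can be arranged via geodesic bigons in the Cayley graph of $G$ in the standard way.
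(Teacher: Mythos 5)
Your overall plan — build a cocompact 2-complex, reduce to showing $G$ is hyperbolic via a Bowditch-type argument, then invoke \cref{hyperbolicity implies linear for groups} — matches the paper's at the top level, but the way you have wired the Bowditch machinery together is not correct, and the one genuinely $R$-specific step is waved away.

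In Bowditch's framework the halving/subdivision argument and the rectangle inequality play opposite roles. The rectangle inequality (axiom (A2)) is a \emph{lower} bound you must establish by hand on the area function; Bowditch's theorem (\cref{bowditch}) then uses the subdivision argument internally to deduce an \emph{upper} bound (linearity) from the subquadratic hypothesis. You have inverted this: you assume non-hyperbolicity, posit the existence of suitable rectangles, and then try to use ``the halving argument'' to produce a quadratic lower bound on their fillings. The halving argument does not do that, and the step ``non-hyperbolic $\Rightarrow$ there exist square rectangles $z_L$ with $|z_L|=O(L)$ enclosing regions requiring $\Omega(L^2)$ cells'' is not supplied — this is not a standard off-the-shelf consequence, and trying to produce it heads dangerously close to assuming the conclusion. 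The paper never argues from non-hyperbolicity at all: it proves (A1) (trivial) and (A2) directly for the discrete-norm area function $A(\gamma)=\inf\{|c|_d : \partial c=\gamma\}$, applies Bowditch's theorem to get linearity of $A$, and only then invokes \cite[Prop.\ 4.1]{KK20} to obtain hyperbolicity.

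The part you dismiss as ``carries over verbatim'' is exactly where the content lies. Over a general ring $R$ with the discrete norm, coefficients can cancel in ways that integers do not, so it is not automatic that a filling of a $d_1\times d_2$ rectangle contains $\gtrsim d_1 d_2$ cells in its support. The paper's Proposition~\ref{axioms are satisfied} handles this by a layering argument: it decomposes $\supp(c)$ into disjoint sets $D_0,\dots,D_{d_1-1}$, where $D_i$ consists of cells adjacent to the support of the partial boundary $q_i=\partial(\sum_{j\le i}c_j)-\alpha_1$, and shows that each $\supp(q_i)$ still contains a path from $\alpha_2$ to $\alpha_4$, forcing $|D_i|\geq d_2/N$. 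That the partial boundaries stay spanning is the nontrivial point; your observation that ``distinct $2$-cells never cancel in $|c|_d$'' is true but does not by itself give you this. So the proposal has a genuine gap both in the logical architecture (halving used in the wrong direction, unjustified extraction of rectangles from non-hyperbolicity) and in the sole step that actually requires new work over an arbitrary ring.
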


By a result of Bowditch from 1995, if an area function satisfies two axioms (a triangle inequality and a rectangle inequality), then it is linear if it is subquadratic:

\begin{theorem} [\cite{Bowditch95}] \label{bowditch}
Let $X$ be a simplicial complex endowed with a metric such that the length of each 1-cell is 1, and let $\Omega$ be the set of all loops in the 1-skeleton of $X$.
Let us suppose that we have a map $A \colon  \Omega \to [0, \infty)$, satisfying the following two axioms:

(A1) (Triangle inequality for theta curves): If $\alpha_1,\alpha_2,\alpha_3$ are paths in the 1-skeleton of $X$ with the same endpoints, then
$A(\alpha_3^{-1}\alpha_1) \leq A(\alpha_2^{-1}\alpha_1) + A(\alpha_3^{-1}\alpha_2)$.

(A2) (Rectangle inequality): There exists $K>0$ such that if $\gamma\in\Omega$ is the concatenation of four paths $\alpha_1,\alpha_2,\alpha_3,\alpha_4$, then $A(\gamma) \geq Kd_1d_2$, where $d_1 = d(\image \alpha_1, \image \alpha_3)$ and $d_2 = d(\image \alpha_2, \image \alpha_4)$.

    Then if $$l\mapsto \sup\set{A(\gamma):\length(\gamma)=l}$$ is subquadratic, it is also linear.
\end{theorem}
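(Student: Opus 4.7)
The plan is to prove Bowditch's dichotomy by an iterative splitting argument: use $(A2)$ to produce a short chord across any loop $\gamma$, use $(A1)$ to bound $A(\gamma)$ by the sum of the areas of the two resulting sub-loops, and iterate, controlling how lengths and areas accumulate across the tree of splits. Throughout I would write $F(l) = \sup\{A(\gamma) : \length(\gamma) \leq l\}$ and assume $F(l) = o(l^2)$ toward showing $F(l) = O(l)$.

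First I would extract a short chord from the rectangle inequality. Given a loop $\gamma$ of length $l$ with area $A(\gamma)\leq F(l)$, subdivide $\gamma$ into four equal arcs $\alpha_1,\alpha_2,\alpha_3,\alpha_4$ of length $l/4$. Axiom $(A2)$ gives $A(\gamma)\geq K\cdot d(\alpha_1,\alpha_3)\cdot d(\alpha_2,\alpha_4)$, so, after relabelling, $d(\alpha_1,\alpha_3)\leq\sqrt{A(\gamma)/K}\leq\sqrt{F(l)/K}$, which is $o(l)$ by the subquadratic hypothesis. Pick realising points $p\in\alpha_1$, $q\in\alpha_3$ and a shortest $1$-skeleton path $\beta$ from $p$ to $q$ of length $|\beta|\leq\sqrt{F(l)/K}$.

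Next I would decompose areas using the triangle inequality. The chord $\beta$ together with the two arcs of $\gamma$ between $p$ and $q$ forms a theta-curve, so $(A1)$ yields $A(\gamma)\leq A(\gamma_1)+A(\gamma_2)$, where $\gamma_1,\gamma_2$ are the two sub-loops obtained by gluing each arc of $\gamma$ to $\beta$. Because $p$ and $q$ lie in opposite quarters, $\max\{\ell(\gamma_1),\ell(\gamma_2)\}\leq \tfrac{3}{4}l+|\beta|$ and $\ell(\gamma_1)+\ell(\gamma_2)=l+2|\beta|$. I would then iterate this splitting, producing a binary tree of sub-loops whose leaves have length below a fixed threshold $l_0$ and hence bounded area $F(l_0)$.

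The main obstacle is extracting a \emph{linear} bound from the resulting recursion
\[F(l)\leq 2F\bigl(\tfrac{3}{4}l+o(l)\bigr).\]
A naive iteration gives only $F(l)=O(l^{\log 2/\log(4/3)})\approx O(l^{2.41})$, which is weaker than the subquadratic hypothesis and certainly not linear. The crux of the argument, following Bowditch, is to feed the subquadratic hypothesis back into the splitting procedure so that the tree is controlled in an amortised sense: since $|\beta|=o(l)$ at every node, the total leaf-length across the whole tree is $l+o(l)$ rather than growing geometrically with depth, and one can arrange splits to become close to balanced ($\ell_i\approx l/2$) rather than the worst-case $3l/4$. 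Combining this amortised length bound with the additive area inequality from $(A1)$ over all leaves and the constant bound $F(l_0)$ at the base case then yields $F(l)=O(l)$. Making this bookkeeping rigorous — in particular, showing that the overhead from the $\beta$-chords does not accumulate across the tree — is the technical heart of the proof.
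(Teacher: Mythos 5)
The paper does not actually prove this statement: Theorem \ref{bowditch} is quoted from \cite{Bowditch95} and used as a black box in the proof of Theorem \ref{subquad implies lin}, so there is no internal proof to compare against. What follows is therefore a comparison with Bowditch's original argument.

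You have the right skeleton, and it is Bowditch's skeleton: divide a worst-case loop into four arcs, use $(A2)$ to find a chord $\beta$ of length at most $\sqrt{A(\gamma)/K}$ between two opposite arcs, use $(A1)$ across the resulting theta-curve to get $A(\gamma)\leq A(\gamma_1)+A(\gamma_2)$, and iterate. You are also right that the naive recursion $F(l)\leq 2F(\tfrac34 l+o(l))$ is useless. The problem is that the step you defer — ``the total leaf-length across the whole tree is $l+o(l)$'' together with ``one can arrange splits to become close to balanced'' — is not bookkeeping; it is the theorem, and neither of the two claims is justified or even quite true as stated.

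On the balance: $(A2)$ only tells you that \emph{some} point of $\alpha_1$ is close to \emph{some} point of $\alpha_3$; it gives you no control over where in those arcs the endpoints of $\beta$ lie, so the split ratio can be anything in $[1/4,\,3/4]$. Nothing in your argument forces the split toward $l/2:l/2$.

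On the amortised overhead: the chord at a node of length $\ell$ has length of order $\sqrt{F(\ell)/K}$, and the subquadratic hypothesis only gives $\sqrt{F(\ell)}/\ell\to 0$, with no rate. Along a root-to-leaf path with lengths $\ell_0=l>\ell_1>\dots>\ell_k\approx l_0$ the accumulated multiplicative overhead is $\prod_j\bigl(1+2\sqrt{F(\ell_j)/K}/\ell_j\bigr)$, and you need $\sum_j\sqrt{F(\ell_j)}/\ell_j$ to be bounded uniformly in $l$. If $F(\ell)\approx\ell^{2-\delta}$ for some $\delta>0$ this sum is geometric and bounded, so your amortisation would work; but for a barely subquadratic $F$, say $F(\ell)\approx\ell^2/\log\ell$, one gets $\sqrt{F(\ell_j)}/\ell_j\approx 1/\sqrt{\log\ell_j}$ and, with $\ell_j$ decaying geometrically over $k\approx\log l$ levels, the sum diverges like $\sqrt{\log l}$. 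So the total leaf-length is \emph{not} $l+o(l)$ in general, and the argument as you set it up cannot reach the full strength of Bowditch's theorem.

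Closing this gap is genuinely subtle. Bowditch's own argument does not run the tree to completion with a one-pass accounting; it works instead with the ratio $F(l)/l$, uses the subquadratic hypothesis at a well-chosen scale to make the chord overhead small relative to that scale, and plays the resulting recursive inequality off against the hypothetical unboundedness of $F(l)/l$ (a record-type / self-improvement argument). Your proposal correctly names the hard step and correctly refuses to pretend the naive recursion suffices, but it then asserts rather than proves exactly that hard step, so it is a restatement of the difficulty rather than a proof.
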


Let $G$ be of type $\FP_2(R)$. 
As in the proof of \cite[Proposition 4.1]{KK20}, let $S$ be a finite generating set for $G$ and consider $\Cay(G,S)$. By the assumption that $G$ is $\FP_2(R)$, we can glue finitely many orbits of 2-cells to $\Cay(G,S)$ such that the resulting complex $X$ satisfies $H_1(X;R)=0$. We have that $X$ is a 2-dimensional $G$-CW-complex on which $G$ acts freely and cocompactly.

Let $\Omega$ be the set of all loops in $\Cay(G,S)$, which is the 1-skeleton of $X$. For a path $\gamma$ in $\Cay(G,S)$ we abusively denote by $\gamma$ the element in $C_1(X;R)$ which is supported on $\gamma$ with all coefficients being 1. Let $A\colon \Omega\to \mathbb{R}_{\geq 0}$ be $A(\gamma)=\inf\set{|c| : c, C_2(X;R), \partial c=\gamma}$. 

\begin{proposition} \label{axioms are satisfied}
    The area function $A$ satisfies axioms (A1) and (A2).
\end{proposition}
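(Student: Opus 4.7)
The plan is to verify axioms (A1) and (A2) in turn for the area function $A$.

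For (A1), I would argue directly from the $\ell^1$-triangle inequality on $C_2(X;R)$ and the $R$-linearity of $\partial$. Given $\epsilon > 0$, choose $c_{12}, c_{23} \in C_2(X;R)$ with $\partial c_{12} = \alpha_1 - \alpha_2$, $\partial c_{23} = \alpha_2 - \alpha_3$, and norms within $\epsilon$ of $A(\alpha_2^{-1}\alpha_1)$ and $A(\alpha_3^{-1}\alpha_2)$ respectively. Then $c_{12} + c_{23}$ is a filling of $\alpha_3^{-1}\alpha_1$ with norm at most $|c_{12}| + |c_{23}|$; letting $\epsilon \to 0$ gives (A1).

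For (A2), fix any filling $c \in C_2(X;R)$ of $\gamma = \alpha_1\alpha_2\alpha_3\alpha_4$, write $d_1 = d(\image\alpha_1, \image\alpha_3)$ and $d_2 = d(\image\alpha_2, \image\alpha_4)$, and note that for the discrete norm $|c| = \#\supp(c)$. The strategy is a level-set slicing. Define the height function $h(v) = d(v, \image\alpha_1)$ on $X^{(0)}$, and for each $0 \leq i < d_1$ let $U_i = h^{-1}[0, i]$, a vertex set separating $\image\alpha_1$ from $\image\alpha_3$; the paths $\alpha_2$ and $\alpha_4$ each contain an edge with one endpoint in $U_i$ and the other outside. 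Call a $2$-cell \emph{$i$-straddling} if its vertex set meets both $U_i$ and its complement. Cocompactness of the $G$-action on $X$, together with finitely many orbits of $2$-cells of bounded attaching length, yields a uniform diameter bound $D$ on $2$-cells, so each $2$-cell is $i$-straddling for at most $D$ values of $i$. Hence
\[D\,|c| \;\geq\; \sum_{i=0}^{d_1-1}\#\bigl\{\sigma \in \supp(c) : \sigma \text{ is $i$-straddling}\bigr\}.\]

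The heart of the argument is a lower bound $\#\{\sigma \in \supp(c) : \sigma \text{ is $i$-straddling}\} \geq K' d_2$, uniform in $i$. Granting this, $|c| \geq (K'/D)d_1d_2$, proving (A2) with $K = K'/D$. To obtain the lower bound, note that the edges of $\alpha_2$ and $\alpha_4$ straddling $\partial U_i$ lie at distance at least $d_2$ minus a bounded constant. Since $\partial c = \gamma$, restricting the boundary relation to the subcomplex of $i$-straddling cells forces the $R$-coefficients $r_\sigma$ of $i$-straddling cells in $\supp(c)$ to form a ``bridge" of vertex-adjacent cells between the two $\gamma$-crossings: the boundary contributions on every internal $i$-crossing edge must cancel, while the two endpoint crossings on $\alpha_2, \alpha_4$ must be matched by non-zero coefficient cells. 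By the bounded diameter of cells, such a bridging sequence has length $\geq K' d_2$. The main technical obstacle is formalising this bridging step over an arbitrary ring $R$: because $R$-linear cancellation can occur in the $r_\sigma$, the combinatorial connectivity of $\supp(c)$ is not automatic. The plan is to use that the discrete norm counts each cell once, together with the local structure of $\partial$ restricted to the subcomplex of $i$-straddling cells, to read off the bridging sequence cell-by-cell, analogously to how one deduces that the standard filling of an $m\times n$ rectangular loop in $\mathbb{Z}^2$ must use every square in each horizontal strip.
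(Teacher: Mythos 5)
Your (A1) argument is correct and matches the paper (which simply declares it clear). Your (A2) argument, however, takes a genuinely different route and has a gap that you yourself flag but do not close.

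You slice by ambient-metric level sets $U_i = h^{-1}[0,i]$ and count $i$-straddling cells of $\supp(c)$, with the bound $D\,|c| \geq \sum_i \#\{\text{$i$-straddling cells}\}$ and the ``heart of the argument'' being the uniform lower bound $\#\{\sigma \in \supp(c) : \sigma \text{ is $i$-straddling}\} \geq K' d_2$. You never prove this lower bound: you note that $R$-linear cancellation means the combinatorial connectivity of $\supp(c)$ is ``not automatic,'' propose to ``read off the bridging sequence cell-by-cell,'' and appeal to an analogy with $\mathbb{Z}^2$. That analogy is suggestive but not an argument; a priori the $i$-straddling cells of $\supp(c)$ could fail to form any connected bridge between $\alpha_2$ and $\alpha_4$, and you give no mechanism that forces them to.

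The paper avoids this entirely by using a different, \emph{intrinsic} decomposition of $\supp(c)$ rather than ambient level sets. It sets $D_0 = \{\sigma \in \supp(c) : \partial\sigma \text{ meets } \alpha_1\}$, lets $c_i$ be the restriction of $c$ to $\bigcup_{j<i}D_j$, and defines $D_i$ to be the cells of $\supp(c) \setminus \bigcup_{j<i}D_j$ whose boundary meets $q_i := \partial c_i - \alpha_1$. The key lemma is that $\supp(q_i)$ contains a path from $\alpha_2$ to $\alpha_4$, hence has $\geq d_2$ edges, so $|D_i| \geq d_2/N$. The crucial point is that this argument sidesteps the cancellation problem you worry about: for an interior edge $e$ with nonzero $q_i$-coefficient, $e$ has coefficient $0$ in $\gamma = \partial c$, so a 2-cell of $\supp(c)\setminus\bigcup_{j<i}D_j$ \emph{must} contain $e$; this is a pointwise coefficient argument on single edges, requiring only $1 \neq 0$ in $R$, and no claim about the support of $c$ being a connected ``disk.'' So while your approach is morally in the same spirit (slice the filling into roughly-$d_2$-sized layers over $\approx d_1$ levels), the paper's recursive layering is what makes the counting go through cleanly over an arbitrary ring; your ambient-distance layering leaves the central counting step unjustified.
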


\begin{proof}
It is clear that $A$ satisfies axiom (A1).

To prove that $A$ satisfies axiom (A2), let $\gamma\in \Omega$ be the concatenation of $\alpha_1,\alpha_2,\alpha_3,\alpha_4$ and let $d_1 = d(\image \alpha_1, \image \alpha_3)$ and $d_2 = d(\image \alpha_2, \image \alpha_4)$. As $\gamma$ is a loop in the 1-skeleton, we have that $d_1,d_2\in \mathbb{N}$.

    Let $c\in C_2(X;R)$ be a filling of $\gamma$. Let $N$ be the maximal number of edges contained in the image of the attaching map of any 2-cell in X. Since the action of G on X is cocompact, the number $N$ is a well-defined integer.

    Let $D_0\subseteq \supp(c)$ be the subset of all 2-cells that intersect $\alpha_1$ in a 1-cell (perhaps several). Since every 2-cell has at most $N$ faces and since the length of $\alpha_1$ is at least $d_2$ we have that $|D_0|\geq \frac{d_2}{N}$. Set $c_1$ to be the 2-chain obtained from $c$ after setting all coefficients outside of $D_0$ to 0.

    We keep going and define $D_1,..., D_{{d_1}-1}$ recursively: in the $i$-th step we assume we defined $D_0,...,D_{i-1}$ pairwise disjoint subsets such that $D_j$ lies in the $j+1$ neighbourhood of $\alpha_1$. We denote by $c_{j+1}$ the 2-chain obtained from $c$ by setting to 0 all coefficients of cells not in $\bigcup_{l=0}^j D_l$. Note that as $i-1< d_1$ we have that $\supp(\partial(\sum_{j=1}^{i} c_j)-\alpha_1)$ contains a path from $\alpha_2$ to $\alpha_4$. 
    We set $D_i$ to be all 2-cells in $\supp(c)-\bigcup_{j=0}^{i-1} D_j$ whose boundary contains at least one 1-cell from $q_{i}=\partial (\sum_{j=1}^{i}c_j)-\alpha_1$.

    As $q_i$ contains a path from $\alpha_2$ to $\alpha_4$ we have that $|q_i|\geq d_2$, so, as explained for $|D_0|$, $|D_i|\geq \frac{d_2}{N}$, which gives us\[|\bigcup _{i=0}^{d_1-1}D_i|=\sum_{i=0}^{d_1-1}|D_i|\geq \sum_{i=0}^{d_1-1}\frac{d_2}{N}=\frac{d_1d_2}{N}.\]
    But of course $|c|\geq |\bigcup _{i=0}^{d_1-1}D_i|$. So we get \[A(\gamma)=\inf\set{|c| : c, C_2(X;R), \partial c=\gamma}\geq \frac{d_1d_2}{N}\] and setting $K=\frac{1}{N}$ we get axiom (A2) as $N$ is a constant that depends only on $X$.
\end{proof}

\cref{subquad implies lin} now follows:

\begin{proof}[Proof of \cref{subquad implies lin}]
Let $G$ be of type $\FP_2(R)$ and assume $f_1^{R,G}$ is subquadratic. In this case for $X$ and $A$ defined above we have that $l\mapsto \sup\set{A(\gamma):\length(\gamma)=l}$ is subquadratic. By \cref{axioms are satisfied}, $A$ satisfies axioms (A1) and (A2) and so by \cref{bowditch} it is linear.

    In \cite[Proposition 4.1]{KK20}, it is proven that if $G$ satisfies a linear isoperimetric inequality over $R$ then it is hyperbolic, but a slightly weaker thing assumption is used - they proved that if any loop (1-cycle which is supported on a loop and all coefficients are 1) can be filled by a 2-chain which has uniformly linearly bounded norm then $G$ is hyperbolic. So by \cite[Proposition 4.1]{KK20} we get that $G$ is hyperbolic. By \cref{hyperbolicity implies linear for groups}, we have that $f_1^{R,G}$ is linear.
\end{proof}

\section{Higher rank hyperbolicity} \label{sec: higher rank}
In this section, we will discuss higher rank hyperbolic spaces, define higher rank hyperbolic groups and derive a Gersten-like theorem for them. Higher rank hyperbolicity is defined using the language of integral currents. For an overview of the subject in the setting of proper metric spaces, see \cite{Lang11}. We give here simpler definitions that are suitable for Euclidean simplicial complexes with finitely many isometry classes of cells and are equivalent to those in \cite{Lang11} as explained in \cref{A}. Higher rank hyperbolicity is a property defined only for spaces satisfying coning inequalities. These have a general definition using integral currents, but we will give here a simpler definition for simplicial complexes instead, using the simplicial structure (see \cref{A} for a proof of the equivalence in our setting).

\begin{definition}
    A \emph{Euclidean simplicial complex} is a locally finite simplicial complex $X$, where each cell is endowed with a Euclidean metric, and $X$ is endowed with the length metric.
\end{definition}

For $X$ a Euclidean simplicial complex, we will denote by $(C_*(X),\partial)$ the simplicial chain complex. Then $Z_k(X)=\ker(\partial\colon C_k(X)\to C_{k-1}(X))$ and \linebreak$M\colon C_k(X)\to \bbR_{\geq 0}$ is defined on $k$-cells by $M(\sigma)=\operatorname{Vol}(\sigma)$ and extended linearly.

When there exists uniform lower and upper bounds on the volume of cells in $X$, $X$ is bi-Lipschitz equivalent to a simplicial complex for which there exist $v_k$ for every $k\leq n$ such that if $\sigma$ is a $k$-cell, we have $\operatorname{Vol}(\sigma)=v_k$. Thus, $M$ is linearly or bi-Lipschitz equivalent to the $\ell^1$-norm on $C_k(X;\bbZ)$.

\begin{definition}[Coning inequalities]
    A simplicial complex $X$ satisfies $(CI_n)$ if there exists a constant $c$ such that any two points $x,x'\in X$ can be joined by a path of length at most $cd(x,x')$, and for $x_0\in X, r>0, k \in\set{ 1, . . . , n}, R\in Z_k(X)$, such that $\supp(R)\subset B_r(x_0)$, there exists $S\in C_{k+1}(X)$ such that $\partial S=R$ and $M(S) \leq c r M(R)$.
\end{definition}

\begin{definition}
    A Euclidean simplicial complex $X$ that satisfies $CI_{n-1}$ is \emph{rank-$n$ hyperbolic} if the simplicial chain complex satisfies a sub-Euclidean isoperimetric inequality in dimension $n$. That is, for every $n$-cycle $Z$, there exists a filling $S$ such that $M(S)\leq CM(Z)^\frac{n+1}{n}$.

    We say that a group $G$ is \emph{rank-$n$ hyperbolic} if there exists a simplicial metric $K(G,1)$ with a finite $(n+1)$-skeleton such that its universal cover is rank-$n$ hyperbolic.
\end{definition}

See \cite{GoLa} for equivalent definitions of higher rank hyperbolic spaces.

\begin{corollary}\label{higherranksubgroup}
    Let $G$ be a rank-${n}$ hyperbolic group with $\operatorname{cd} G=n+1$. If $H<G$ is a subgroup for which there exists a simplicial metric $K(H,1)$ with a finite $(n+1)$-skeleton such that its universal cover satisfies $(CI_n)$, then $H$ is rank-$n$ hyperbolic.
\end{corollary}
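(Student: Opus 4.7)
The strategy is to transfer the sub-Euclidean $n$-isoperimetric inequality from $G$ to $H$ using Theorem~\ref{iso bounds for subgroups} over $R=\mathbb{Z}$ with the absolute-value norm, applied in ``dimension $n+1$'' (i.e.\ with the $n$ of that theorem equal to $n+1$). By hypothesis the universal cover $\tilde X_H$ of the given $K(H,1)$ already satisfies $(CI_n)$, and in particular $(CI_{n-1})$, so the only missing piece for rank-$n$ hyperbolicity of $\tilde X_H$ is a sub-Euclidean bound $M(S)\leq C\,M(Z)^{(n+1)/n}$ on fillings of $n$-cycles. First I would record the finiteness properties needed: the existence of the simplicial $K(H,1)$ with finite $(n+1)$-skeleton means $H$ is of type $F_{n+1}$, and hence of type $\FP_{n+1}(\mathbb{Z})$ and $\FH_{n+1}(\mathbb{Z})$; rank-$n$ hyperbolicity of $G$ similarly provides a simplicial $K(G,1)$ with finite $(n+1)$-skeleton, so $G$ is of type $\FP_{n+1}(\mathbb{Z})$, and by assumption $\operatorname{cd}_{\mathbb{Z}}(G)=n+1$.

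Next I translate the mass-formulation of the isoperimetric inequality on $\tilde X_G$ into the $\ell^1$-language used in Section~\ref{sec:normedrings}. Because the cocompact $G$-action on $\tilde X_G$ leaves only finitely many isometry classes of cells, the mass $M$ on $C_*(\tilde X_G;\mathbb{Z})$ is bi-Lipschitz equivalent to the $\ell^1$-norm induced by $|\cdot|_{\mathrm{abs}}$, so the sub-Euclidean hypothesis is equivalent to
\[
 f_n^{\mathbb{Z},\tilde X_G}(l)\preccurlyeq l^{(n+1)/n},
\]
and Corollary~\ref{action preserves isop} identifies this with $f_n^{\mathbb{Z},G}\preccurlyeq x^{(n+1)/n}$. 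I then apply Theorem~\ref{iso bounds for subgroups} (with $R=\mathbb{Z}$, $|\cdot|_{\mathrm{abs}}$, and the theorem's $n$ taken to be $n+1$) whose hypotheses are all in place; the conclusion is
\[
 f_n^{\mathbb{Z},H}\preccurlyeq f_n^{\mathbb{Z},G}\preccurlyeq x^{(n+1)/n}.
\]
Applying Corollary~\ref{action preserves isop} in the other direction to the cocompact free $H$-action on the contractible complex $\tilde X_H$ yields $f_n^{\mathbb{Z},\tilde X_H}\approx f_n^{\mathbb{Z},H}$, and invoking the bi-Lipschitz equivalence of $M$ with the $\ell^1$-norm on $C_*(\tilde X_H;\mathbb{Z})$ once more converts this into a sub-Euclidean mass-isoperimetric inequality. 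Combined with $(CI_n)$ this gives rank-$n$ hyperbolicity of $\tilde X_H$, and hence of $H$.

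The main point that requires care is the bookkeeping between the two parallel formalisms: the ``mass'' framework used to define higher rank hyperbolicity on $\tilde X_G,\tilde X_H$ versus the $\ell^1$/filling-norm framework in which Theorem~\ref{iso bounds for subgroups} and Corollary~\ref{action preserves isop} are stated. One must check that for a free cocompact simplicial action with finitely many isometry types of cells, $M$ and $|\cdot|_{\mathrm{abs}}$ are bi-Lipschitz equivalent on each $C_k$, so that the exponent $(n+1)/n$ has the same meaning in both worlds and the $\preccurlyeq$-relation survives the back-and-forth translation. This is routine but must be stated explicitly; the equivalence of the simplicial and integral-currents viewpoints discussed in the appendix supports the same identification in greater generality. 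Beyond this verification, every step is a direct citation of results already proved earlier in the paper.
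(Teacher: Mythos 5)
Your proposal is correct and follows essentially the same route as the paper: both reduce the corollary to Theorem~\ref{iso bounds for subgroups} applied with $R=\mathbb{Z}$, both observe that the given finiteness hypotheses supply $\FP_{n+1}(\mathbb{Z})$ for $G$ and $H$ and $\operatorname{cd}_{\mathbb{Z}}(G)=n+1$, and both transfer the sub-Euclidean $n$-isoperimetric inequality from the universal cover of $K(G,1)$ through the group-level invariant to the universal cover of $K(H,1)$, then combine with $(CI_n)$ to conclude. The only cosmetic difference is that the paper invokes Lemma~\ref{iso class is independent} to pass between admissible resolutions while you route through Corollary~\ref{action preserves isop}; these are interchangeable here since $C_*(\tilde X_G;\mathbb{Z})$ and $C_*(\tilde X_H;\mathbb{Z})$ are themselves admissible resolutions. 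Your explicit remark that mass $M$ is bi-Lipschitz equivalent to the $\ell^1$-norm on each $C_k$ (because the cocompact action leaves finitely many isometry classes of cells) is exactly the point the paper records just before the definition of $(CI_n)$, so your ``bookkeeping'' caveat is addressed by the paper's setup rather than being a genuine gap.
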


\begin{proof}
    By definition, there exists $X$, the universal cover of a $K(G,1)$ with a finite $(n+1)$-skeleton that satisfies a sub-Euclidean isoperimetric inequality. So the chain complex $C_*(X)$ satisfies a sub-Euclidean isoperimetric inequality in dimension $n$. By \cref{iso class is independent}, every $(n+1)$-admissible $\bbZ G$ resolution of $\bbZ$ satisfies a sub-Euclidean isoperimetric inequality in dimension $n$.

    Let $Y$ be a simplicial metric $K(H,1)$ with a finite $(n+1)$-skeleton such that its universal cover satisfies $(CI_n)$. From \cref{iso bounds for subgroups}, we get that any $n$-admissible $\bbZ H$ resolution of $\bbZ$ satisfies a sub-Euclidean isoperimetric inequality in dimension $n$. In particular, the resolution $C_*(Y)$ does.
\end{proof}
\begin{corollary}
    Let $G$ be a rank-${n}$ hyperbolic group with $\operatorname{cd}G=n+1$. Then if $H<G$ is a subgroup of type $F_{n+1}$ of finite Nagata dimension, then $H$ is rank-$n$ hyperbolic.
\end{corollary}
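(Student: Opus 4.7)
The strategy is to reduce the claim to the preceding \cref{higherranksubgroup}. By that corollary, it is enough to exhibit a simplicial $K(H,1)$ with finite $(n+1)$-skeleton whose universal cover satisfies the coning inequality $(CI_n)$. Since $H$ is of type $F_{n+1}$, there exists a $K(H,1)$ with finite $(n+1)$-skeleton, and by simplicial approximation/subdivision we may take it to be a simplicial complex $Y$. Give each cell of $Y$ its standard Euclidean metric so that $\tilde Y$ becomes a Euclidean simplicial complex in the sense of \cref{sec: higher rank}. The group $H$ acts freely, properly and cocompactly on the $(n+1)$-skeleton $\tilde Y^{(n+1)}$, so by the Milnor--\v Svarc lemma the latter is quasi-isometric to $H$ equipped with the word metric of any finite generating set.

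The hypothesis that $H$ has finite Nagata dimension enters at this point. Since Nagata dimension is a quasi-isometry invariant among cocompact, proper, geodesic metric spaces, the space $\tilde Y^{(n+1)}$ also has finite Nagata dimension. A theorem of Lang and Schlichenmaier asserts that any metric space of finite Nagata dimension satisfies (Euclidean, and in particular) linear coning inequalities in all dimensions for integral Lipschitz cycles. Hence $\tilde Y^{(n+1)}$ satisfies the metric coning inequality through dimension $n$.

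To feed this into \cref{higherranksubgroup}, we have to translate the coning inequality for Lipschitz/current cycles into the combinatorial coning inequality $(CI_n)$ for the simplicial chain complex $C_\ast(\tilde Y)$. This is precisely the content of the equivalence proved in the appendix: for a Euclidean simplicial complex with finitely many isometry types of cells --- which is automatic here, as $\tilde Y$ is the universal cover of a finite simplicial complex through dimension $n+1$ --- coning inequalities for integral currents and coning inequalities for simplicial chains agree up to multiplicative constants. Combined with the previous paragraph, $\tilde Y$ satisfies $(CI_n)$, and \cref{higherranksubgroup} yields that $H$ is rank-$n$ hyperbolic.

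The main obstacle is the translation between the metric/current-theoretic coning inequality produced by the Lang--Schlichenmaier machinery and the simplicial definition of $(CI_n)$ used throughout \cref{sec: higher rank}; this step depends essentially on the appendix, and also on the cocompactness of the $H$-action (which controls the geometry of cells and hence the bounds between the $\ell^1$ mass and the volume mass $M$). The QI-invariance of Nagata dimension is standard in this setting, and the construction of the simplicial model $Y$ from $F_{n+1}$ is routine.
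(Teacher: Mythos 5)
Your argument matches the paper's in substance: both establish that $H$ inherits a sub-Euclidean $n$-isoperimetric inequality from $G$ (via \cref{iso bounds for subgroups}, packaged as \cref{higherranksubgroup}), and both obtain $(CI_n)$ for $H$ from the finite Nagata dimension hypothesis, then combine. Your write-up is considerably more detailed than the paper's two-line proof --- you spell out the construction of a simplicial $K(H,1)$ from type $F_{n+1}$, the Milnor--\v{S}varc identification, the quasi-isometry invariance of Nagata dimension, and the translation between the current-theoretic and simplicial coning inequalities via the appendix --- all of which the paper leaves implicit in its citation.

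One correction on attribution: you credit ``a theorem of Lang and Schlichenmaier'' for the implication that finite Nagata dimension yields coning inequalities for integral cycles. Lang--Schlichenmaier's theorem is really about Lipschitz extension properties (that finite Nagata dimension makes a space Lipschitz $n$-connected / an absolute Lipschitz retract in the relevant range); the passage from there to cone-type filling inequalities in the language of integral currents is later work, and the paper cites \cite[Corollary 1.3]{BasWenYou} precisely for this step. The mathematical content you invoke is available in the literature, so this does not break the argument, but the citation should be to Basso--Wenger--Young rather than to Lang--Schlichenmaier.
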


\begin{proof}
    Similarly to the previous corollary, $H$ satisfies a sub-Euclidean isoperimetric inequality. By \cite[Corollary 1.3]{BasWenYou}, we have that $H$ also satisfies $(CI_n)$.
\end{proof}

In general, one cannot hope that subgroups with sufficiently strong finiteness properties satisfy $(CI_n)$. In \cite[Section 2.5.2]{BRSFinGen} a right-angled Artin group is constructed with a subgroup that is of type $F$ and has a quartic Dehn function. In contrast, groups satisfying $(CI_n)$ necessarily satisfy a quadratic Dehn function \cite{WengerIsoConing}.

We showed that for a group $G$ and a general ring $R$, $\hammfill{1}{R}{G}$ is linear if and only if $FV_1^{\mathbb{Z},G}$ is linear. This is because both are equivalent to the hyperbolicity of $G$. This leads us to ask whether the same holds for higher rank hyperbolicity, namely:
\begin{question}
    Is $\hammfill{n}{R}{G}$ being sub-Euclidean for some $R$ equivalent to $FV_n^{\mathbb{Z},G}$ being sub-Euclidean?
\end{question}

\appendix
\section[Higher rank hyperbolicity: simplicial definitions]{Higher rank hyperbolicity, simplicial definitions \newline\small{By Shaked Bader}} \label{A}

In \cite{KL20} it is noted that for a metric simplicial complex one can define coning inequalities using integral currents or using simplicial chains and that those definitions are equivalent due to a variant of the Federer–Fleming deformation theorem \cite{FedFlem}, but no proof is given. We will give a proof of this fact (under mild assumptions on the complex) using a suitable variant given in the appendix of \cite{BasWenYou}.
We will prove similarly and under the same assumptions that the isoperimetric functions of a Euclidean simplicial complex $X$ with respect to the integral currents chain complex and the simplicial chain complex are equivalent. 
This shows that the equivalent definitions for higher rank hyperbolicity given in \cite{GoLa} are equivalent to the definitions given in \cref{sec: higher rank}.

\subsection{Metric currents}
We refer the reader to \cite{Lang11} for more background regarding metric currents and integral currents. The definition of integral currents given here follows from a theorem in Section 8 of \cite{Lang11}.

Let $X$ be a locally compact metric space. We consider the space of compactly supported Lipschitz functions from $X$ to $\mathbb{R}$, denoted $\mathscr{D}(X)$ and the space of locally Lipschitz maps from $X$ to $\bbR$, denoted $\Liploc(X)$.

\begin{definition}
    An $n$-dimensional metric current on $X$ is a functional $T\colon\mathscr{D}(X)\times [\Liploc(X,\bbR)]^n\to \bbR$ such that:

    \begin{enumerate}
        \item (Multilinearity) $T$ is $n+1$-linear.
        \item (Continuity) $T(f^k,\pi_1^k,..., \pi_n^k) \to T(f,\pi_1,..., \pi_n)$ whenever $\pi_i^k\to_{k\to \infty} \pi_i,f^k\to f$ pointwise on $X$, $\sup_k\Lip(\pi_i^k|_K) < \infty$ for every compact set $K \subseteq X$ and $i\in\set{1,...,n}$ and $\bigcup_k \spt(f,k)$ sits inside some compact subset of $X$.
        \item (Locality) $T(f,\pi_1,...,\pi_n)=0$ if there exists $i$ such that $\pi_i|_{\supp(f)}\equiv c$ for some constant $c$.
    \end{enumerate}
\end{definition}

\begin{definition}[support]
Given an $n$-dimensional current $T$, its \emph{support}, $\supp(T)$, is the intersection of all closed sets $C\subseteq X$ with the property that $T(f,\pi_1,...,\pi_n)=0$ whenever $\supp(f)\cap C=\emptyset$.
\end{definition}

Our motivating example is the following:
\begin{example} \label{standard example}
    Given $u\in L^1(\bbR^n)$ and a bi-Lipschitz map $F\colon\supp(u)\to X$, we can define the $n$-current $F_\sharp[u]$ to be \[F_\sharp[u](f,\pi_1,...,\pi_n)=\intop_{\bbR^n} f\circ F(x_1,...,x_n)\cdot \det(d(\pi_i\circ F))\] when $f\circ F\in L^1(\bbR^n)$ and use the fact that $L^1(\bbR^n)$ is dense in the compactly supported Lipschitz functions, to define $T$ on $\mathscr{D}(X)\times [\Liploc(X,\bbR)]^n$.
\end{example}

We can now define the mass of a current which is a generalization of volume, i.e. in the case of \cref{standard example} when $u=1_W$ we should get the volume of $F(W)$.

\begin{definition}
    Let $T$ be an $n$-current. We define the \emph{mass of $T$}, $M(T)$, to be the supremum of $\sum_i T(f_i,\pi^i_1,...,\pi^i_n)$ where the supremum goes over all finite families of $\set{(f_i,\pi^i_1,...,\pi^i_n)}$ such that $\sum_i |f_i|\leq 1$ and $\pi^i_j|_{\supp(f_i)}$ are 1-Lipschitz.
\end{definition}

We have a boundary map $\partial$ from $n$-currents to $(n-1)$-currents defined by $\partial(T)(f,\pi_1,...,\pi_{n-1})=T(\chi,f,\pi_1,...,\pi_{n-1})$ where $\chi$ is a compactly supported Lipschitz function which is 1 on the support of $f$ (this is well defined by locality). By locality again, we have that $\partial\circ \partial =0$.

We will restrict ourselves to currents with finite mass, and as we want to use a chain complex structure we will restrict ourselves to normal currents:

\begin{definition}
    We define the \emph{compactly supported normal $m$-currents} to be the $m$-currents $T$ such that $M(T)+M(\partial T)<\infty$ and there exists a compact set $K$ such that the value of $T(f,\pi_1,...,\pi_n)$ depends only on the values of $f|_K,\pi_1|_K,...,\pi_n|_K$, i.e. if $(f|_K,\pi_1|_K,...,\pi_m|_K)=(f'|_K,\pi'_1|_K,...,\pi'_m|_K)$ then $T(f,\pi_1,...,\pi_m)=T(f',\pi'_1,...,\pi'_m)$.

    We denote the chain complex of compactly supported normal currents by $N_*(X)$.
\end{definition}

We now define integral currents to be currents you get by ``glueing'' together currents of the form given in \cref{standard example}.

\begin{definition}
    We say a normal compactly supported $n$-current $T$ is \emph{integral} if there exists $u_i\in L^1(\bbR^n,\bbZ)$ and bi-Lipschitz maps $F_i\colon\supp(u_i)\to X$ such that $T=\sum_{i=1}^\infty F^i_\sharp[u_i]$.

    We denote the integral $n$-currents on $X$ by $I_n(X)$.
\end{definition}

It is easily checked that $(I_*(X),\partial)$ is a chain complex, so one can consider the isoperimetric functions with respect to $\partial$ and the mass, that is $$F^{\text{Curr}}_n(l)=\sup_{b\in \image(\partial\colon I_{n+1}(X)\to I_n(X)),M(b)\leq l}\inf\{M(c)\mid c\in I_{n+1}(X),\partial(c)=b\}.$$

We consider the simplicial chain complex $(C_*(X),\partial)$ as sitting inside $(I_*(X), \partial)$ via the embedding of $C_*(X)$ in the Lipschitz singular chain complex and the chain complex map $[\cdot]\colon C_*^\text{Lip}(X)\to I_*(X)$ which is defined to be additive and for $A$ a Lipschitz map from the $n$-simplex $\Delta^n$ to $X$ we define $[A]=A_\sharp[1_{\Delta^n}]$.

\subsection{Isoperimetric and coning inequalities}
Throughout, we assume that $X$ is a locally finite simplicial complex, each cell is endowed with a Euclidean metric and there are finitely many isometry types of cells. In particular, $X$ is finite dimensional. 

We remind the reader that one can consider the simplicial chain complex with $\mathbb{Z}$ coefficients endowed with the $\ell^1$-norm and the isoperimetric function $f_n^X$ derived from it, as in \cref{def isop for spaces}.

\begin{theorem} \label{lin isoper ineq}
    Let $X$ be a simplicial complex of dimension $n$, each cell endowed with a Euclidean metric such that there are finitely many isometry classes of cells. Then the isoperimetric functions $f_n:=f_n^X$ and $F_n^\text{Curr}$ are equivalent.
\end{theorem}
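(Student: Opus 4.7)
The plan is to transfer between simplicial chains and integral currents via the Federer--Fleming-type deformation theorem established in the appendix of \cite{BasWenYou}, using the hypothesis of finitely many isometry classes of cells to link mass with the $\ell^1$-norm. More precisely, finitely many isometry types in each dimension $\leq n+1$ yield constants $c_1,c_2>0$ with $c_1|c|\leq M([c])\leq c_2|c|$ for every simplicial chain $c$, where $[\,\cdot\,]\colon C_*(X)\hookrightarrow I_*(X)$ is the inclusion. This bi-Lipschitz identification is all that the coarse equivalence relation $\approx$ requires on the simplicial side.

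For the direction $f_n\preccurlyeq F_n^{\text{Curr}}$, take a simplicial cycle $z\in Z_n(C_*(X))$ with $|z|\leq l$. Then $[z]\in I_n(X)$ is a cycle with $M([z])\leq c_2 l$, so by definition of $F_n^{\text{Curr}}$ there is a filling $T\in I_{n+1}(X)$ with $\partial T=[z]$ and $M(T)\leq F_n^{\text{Curr}}(c_2 l)+1$. Applying the deformation theorem to $T$ produces a polyhedral $P\in I_{n+1}(X)$ with $M(P)\leq C\cdot M(T)$; crucially, because $\partial T=[z]$ is already polyhedral, the deformation can be arranged so that $\partial P=[z]$ (no residual error term on the boundary). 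Writing $P=[p]$ with $p\in C_{n+1}(X)$ yields $\partial p=z$ and $|p|\leq c_1^{-1}M(P)\leq C'\bigl(F_n^{\text{Curr}}(c_2 l)+1\bigr)$, which is the required estimate.

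For the reverse direction $F_n^{\text{Curr}}\preccurlyeq f_n$, take an integral $n$-cycle $T$ with $M(T)\leq l$. Applying the deformation theorem to $T$, and using that $\partial T=0$, produces a decomposition $T=P+\partial Q$ with $P\in I_n(X)$ polyhedral, $Q\in I_{n+1}(X)$, and $M(P),M(Q)\leq C\cdot M(T)$. Since $\partial P=\partial T=0$, writing $P=[p]$ gives a simplicial $n$-cycle $p$ with $|p|\leq c_1^{-1}C\cdot l$. The simplicial isoperimetric hypothesis produces a filling $c\in C_{n+1}(X)$ with $\partial c=p$ and $|c|\leq f_n(|p|)+1$. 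Then $[c]+Q\in I_{n+1}(X)$ is an integral filling of $T$, and its mass is bounded by $c_2\bigl(f_n(c_1^{-1}Cl)+1\bigr)+C l$, giving $F_n^{\text{Curr}}\preccurlyeq f_n$.

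The main obstacle is securing the precise form of the deformation theorem: one needs a version that operates on a general Euclidean simplicial complex $X$ with finitely many isometry classes of cells (rather than on $\mathbb{R}^N$), provides linear bounds $M(P),M(Q)\leq C\bigl(M(T)+M(\partial T)\bigr)$ with $C$ depending only on these isometry classes, and preserves polyhedrality of the boundary when the input already has polyhedral boundary. The appendix of \cite{BasWenYou} supplies exactly such a statement; once it is in hand, both directions reduce to the computations above.
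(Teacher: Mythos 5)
Your proof is correct and follows essentially the same route as the paper's: both directions hinge on the deformation theorem from the appendix of \cite{BasWenYou} together with the bi-Lipschitz comparability of mass and the $\ell^1$-norm coming from the finiteness of isometry types. The one place you gloss over is the justification for $\partial P=[z]$ when filling a simplicial cycle via currents: rather than appealing to a relative (``rel polyhedral boundary'') refinement of the deformation theorem, the paper derives this directly from the stated version by noting that the remainder term $R$ satisfies $\supp(R)\subseteq\Hull(\supp(\partial T))$, which lies in the $n$-skeleton, so $R$ is an $(n+1)$-current supported in an $n$-dimensional set and hence vanishes, giving $\partial P=\partial T$ with no extra input.
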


To prove this theorem, we will use the fact that every integral current is close to a simplicial current. This is encoded in the following theorem:

\begin{theorem}[{\cite[Theorem A.2]{BasWenYou}}] \label{deformation theorem}
    Let $X$ be a simplicial complex of dimension $n$, each cell endowed with a Euclidean metric such that there are finitely many isometry classes of cells.
    There exists a constant $C>0$ such that for $T\in I_k(X)$, there exists $P$ a $k$-simplicial chain, $R\in I_k(X)$ and $S\in I_{k+1}(X)$ such that 
    $T=P+R+\partial S$, and
        \begin{align*}
                  &  M(P)\leq  CM(T), \\&
                    M(\partial P)\leq  CM(\partial T), \\&
                    M(R)\leq  CM(\partial T)\text{ and} \\&
                    M(S)\leq  CM(T). \\
        \end{align*}
    Furthermore, denoting by $\Hull(A)$ the smallest subcomplex containing $A$, we have \[\supp(R),\supp(\partial P)\subseteq \Hull(\supp(\partial T)),\] and \[\supp(P),\supp(S)\subset\Hull(\supp(T)).\]
\end{theorem}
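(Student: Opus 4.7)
The plan is to carry out a Federer--Fleming style deformation adapted to the simplicial structure of $X$, iterating through the skeleta from the ambient dimension downward until $T$ is pushed into the $k$-skeleton. At each stage, on each cell of the current top dimension meeting the support of $T$, we centrally project from a carefully chosen interior point onto the boundary. The straight-line homotopy from the identity to this projection, pushed forward along $T$, produces the $(k+1)$-current $S$; the final pushforward becomes the simplicial chain $P$; and a boundary-localized residue accounts for $R$.

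The technical heart is the choice of projection centers. For each cell $\sigma$ of the current top dimension $m$, we must find an interior point $p_\sigma$ such that the radial projection $\pi_{p_\sigma}\colon\sigma\setminus\{p_\sigma\}\to\partial\sigma$ satisfies $M\bigl(\pi_{p_\sigma\sharp}(T\llcorner\sigma)\bigr)\leq cM(T\llcorner\sigma)$ together with the analogous bound for $\partial T\llcorner\sigma$. Writing each of these masses as an integral of a Jacobian of $\pi_{p_\sigma}$ against the density of the relevant current, and averaging over $p_\sigma$ in an interior ball of $\sigma$, Fubini's theorem forces the measure of bad centers to be small, and one can in addition avoid the Hausdorff-null set $\supp(\partial T)\cap\sigma$. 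Crucially, because $X$ has only finitely many isometry classes of cells, the constant $c$ and the size of the center-ball may be chosen uniformly in $\sigma$.

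Gluing the maps $\pi_{p_\sigma}$ yields a Lipschitz self-map $\phi_m\colon X\to X^{(m-1)}$ that is the identity on the $(m-1)$-skeleton and on every cell disjoint from $\supp(T)$. The straight-line homotopy from the identity to $\phi_m$ pushed forward along $T$ gives an integral current $S^{(m)}$ with $\partial S^{(m)}=T-\phi_{m\sharp}T-U^{(m)}$, where $U^{(m)}$ is the homotopy acting on $\partial T$ and satisfies $M(U^{(m)})\leq CM(\partial T)$. Iterating with $\phi_m,\phi_{m-1},\dots,\phi_{k+1}$ drives the support into the $k$-skeleton. A normal $k$-current supported in $X^{(k)}$ decomposes, by the constancy theorem applied on each open $k$-cell, as an integer simplicial chain $P$ plus a current supported in $X^{(k-1)}$; this lower-skeletal piece, together with the accumulated $U^{(m)}$ terms, forms $R$. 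Summing the homotopies gives $S$, and the mass bounds accumulate to the stated constants precisely because the finite list of isometry classes keeps the per-step constant uniform.

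The main obstacle is twofold. First, the averaging step must simultaneously supply good centers for $T$ and for $\partial T$; this requires ensuring the set of admissible centers has positive measure, which leverages both the uniform geometry coming from finitely many isometry types and the fact that $\supp(\partial T)$ is lower-dimensional inside $\sigma$. Second, the support conditions demand that every $\phi_m$ act as the identity outside cells actually hit by $\supp(T)$, so that $\supp(P),\supp(S)\subseteq\Hull(\supp(T))$, while the residues contributing to $R$ and $\partial P$ must be traced to arise only from cells touching $\supp(\partial T)$, yielding $\supp(R),\supp(\partial P)\subseteq\Hull(\supp(\partial T))$. Making these localizations compatible with the linear-homotopy decomposition, and propagating them through the downward iteration without losing the uniform constants, is the delicate part of the argument.
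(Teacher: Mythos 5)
The paper does not prove this result; it is quoted verbatim as \cite[Theorem A.2]{BasWenYou} and used as a black box throughout the appendix. Your sketch is a plausible reconstruction of the Federer--Fleming deformation argument that the cited reference carries out, and you correctly isolate where the finite-isometry-type hypothesis enters: it keeps the per-cell deformation constant and the radius of the admissible centre-ball uniform over all cells, so the downward induction through the skeleta accumulates only a bounded factor.

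There is, however, a genuine misstep in the final stage. You claim that after pushing $T$ into the $k$-skeleton the resulting current $T'$ decomposes, by constancy on each open $k$-cell, as an integer simplicial chain plus a $k$-current supported in $X^{(k-1)}$. A $k$-current supported in a $(k-1)$-dimensional set has zero mass, so your decomposition would force $T'$ to already be simplicial; this fails whenever $\partial T'$ meets the interior of a $k$-cell, where the density of $T'$ is only locally constant and jumps across the pushed image of $\supp(\partial T)$. The correct argument applies one further deformation, now to the $(k-1)$-current $\partial T'$, radially projecting it out of the interiors of the $k$-cells into $X^{(k-1)}$. Writing $W = h_\sharp([0,1]\times\partial T')$ for the corresponding homotopy current, the homotopy formula gives $\partial(T'+W)=\phi_{k\sharp}\partial T'$, which is supported in $X^{(k-1)}$; constancy on $k$-cells then applies to $P:=T'+W$ and produces the simplicial chain, while $R=-W$ together with the earlier boundary-homotopy terms gives the residue with $M(R)\leq C\,M(\partial T)$ and $\supp(R)\subseteq\Hull(\supp(\partial T))$. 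This same step is what delivers $M(\partial P)\leq C\,M(\partial T)$ and $\supp(\partial P)\subseteq\Hull(\supp(\partial T))$, since $\partial P=\phi_{k\sharp}\cdots\phi_{n\sharp}(\partial T)$ is itself an iterated pushforward of $\partial T$. With this repair, your outline coincides with the standard proof.
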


\begin{observation}
    If $T$ is a cycle we get that $R=0$.
\end{observation}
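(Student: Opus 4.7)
The plan is to deduce the observation straight from the quantitative bounds in \cref{deformation theorem}, without invoking any further machinery; in fact either of the two conclusions of that theorem suffices on its own.

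The most direct route goes through the mass bound $M(R)\leq CM(\partial T)$. If $T$ is a cycle, then $\partial T=0$, so $M(\partial T)=0$ and hence $M(R)=0$. Since a normal (in particular, integral) current of mass zero must be the zero current---this is immediate from the definition of $M$ as the supremum of evaluations against admissible tuples $(f_i,\pi_1^i,\ldots,\pi_k^i)$, together with multilinearity, since every evaluation is then forced to vanish---we conclude $R=0$.

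The support clause of \cref{deformation theorem} supplies an equally quick alternative: $\supp(R)\subseteq \Hull(\supp(\partial T))$, and the assumption $\partial T=0$ forces $\supp(\partial T)=\emptyset$, so $\Hull(\supp(\partial T))=\emptyset$ and hence $\supp(R)=\emptyset$, whence $R=0$. Either way, the observation is really just a corollary of the principle that the ``error'' term $R$ in the deformation theorem is controlled by the boundary of the input.

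There is no serious obstacle here; the only point worth checking carefully is the standard implication $M(R)=0\Rightarrow R=0$ in the category of integral (equivalently, normal) currents, and that follows directly from the definitions recalled at the start of the appendix.
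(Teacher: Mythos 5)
Your proposal is correct and takes essentially the same route the paper intends: the observation follows immediately from the bound $M(R)\leq CM(\partial T)$ in \cref{deformation theorem} together with the standard fact that a current of vanishing mass is zero. The alternative argument via $\supp(R)\subseteq\Hull(\supp(\partial T))=\emptyset$ is an equally valid one-liner; since the paper states this as an unproved observation, either reading is consistent with it.
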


\begin{proof} [Proof of \cref{lin isoper ineq}]
In this proof, we identify between simplicial chains and their image in the integral currents.
$F_n^\text{Curr}\preccurlyeq f_n$:

Let $T\in I_n(X)$ be a boundary. By \cref{deformation theorem} there exists $P,R,S$ as in the theorem and as $T$ is a cycle $R=0$. Let $V$ be a cellular filling of $P$ such that $M(V)\leq f_n(M(P))$, which exists by the definition of $f_n$. We have that $\partial(V+S)=\partial(V)+\partial(S)=P+\partial S=T$ and $M(V+S)\leq M(V)+M(S)\leq f_n(M(T))+C(M(T))$, so $F_n^\text{Curr}(T)\preccurlyeq f_n(T)$.

$F_n^\text{Curr}\succcurlyeq f_n$:

Let $V$ be an $n$-simplicial boundary. There exists $T\in I_{n+1}(X)$ filling $V$ such that $M(T)\leq F_n^\text{Curr}(M(V))+1$. 
By \cref{deformation theorem}, $T=P+R+\partial S$ as in the theorem. 
We have that $\supp(R)\subseteq \Hull(\supp(\partial T))\subseteq X^{(k)}$, and as $R$ is an $(n+1)$-current this gives us that $R=0$. So $\partial(P)=\partial T=V$ and $M(P)\leq CM(V)\leq C(F_n^\text{Curr}(M(T))+1)$.
\end{proof}

\begin{remark}
   One can define the isoperimetric functions corresponding to the chain complex of Lipschitz maps and, using the version of the deformation theorem in \cite[Chapter 10.3]{Epstein}, prove similarly that the isoperimetric functions corresponding to it are equivalent to the simplicial and current ones.
\end{remark}

In \cite{KK20} it is shown that a $\PD_n$ group is either amenable or satisfies a linear simplicial isoperimetric inequality in dimension $n-1$. Using that and \cref{lin isoper ineq} we get the following:
\begin{corollary}
    A non-amenable $\PD_n$ group with a finite metric $K(G,1)$ whose universal cover satisfies $(CI_n)$ is rank-$(n-1)$ hyperbolic.
\end{corollary}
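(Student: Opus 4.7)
The plan is to verify the definition of rank-$(n-1)$ hyperbolic group directly, by combining the hypothesis $(CI_n)$ with the result of Kielak--Kropholler recalled just before the corollary. First I would observe that the condition $(CI_n)$ is indexed by all $k\in\{1,\dots,n\}$, so it automatically implies $(CI_{n-2})$, which is the coning inequality that appears in the definition of a rank-$(n-1)$ hyperbolic Euclidean simplicial complex (shifting the ``rank-$n$ requires $CI_{n-1}$'' convention down by one). Thus the universal cover $X$ of the given finite metric $K(G,1)$ already meets the coning part of the definition.

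Next, I would apply the cited result from \cite{KK20}: since $G$ is a non-amenable $\PD_n$ group, the simplicial isoperimetric function $f_{n-1}$ computed on any admissible resolution of $\mathbb{Z}$ is linear. Applied to the reduced simplicial chain complex of $X$ (which is admissible because the $K(G,1)$ is finite), this gives a linear bound in the $\ell^1$ norm on simplicial chains. Because there are finitely many isometry classes of cells in $X$, the mass $M$ is bi-Lipschitz equivalent to the $\ell^1$ norm on each $C_k(X)$ (as noted in the paper just after defining $M$), so the bound stays linear when measured in $M$. Linear is certainly sub-Euclidean in dimension $n-1$, since $x\preccurlyeq x^{n/(n-1)}$ for $n\geq 2$; the case $n\geq 2$ is automatic, as $\mathbb{Z}$ is the only $\PD_1$ group and it is amenable.

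Combining the sub-Euclidean bound on $f_{n-1}$ in dimension $n-1$ with the coning inequality $(CI_{n-2})$ gives exactly the definition of rank-$(n-1)$ hyperbolic for the simplicial complex $X$. Since $X$ is the universal cover of a finite metric $K(G,1)$ (in particular with finite $n$-skeleton), this in turn is precisely the definition of rank-$(n-1)$ hyperbolic group applied to $G$. The only real obstacle is bookkeeping: one has to align the dimension shift between the group-level and space-level definitions and translate between the $\ell^1$-norm bound delivered by \cite{KK20} and the mass $M$ used to formulate higher rank hyperbolicity, both of which the paper has already set up explicitly.
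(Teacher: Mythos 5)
Your proof is correct and arrives at the same conclusion, but it sidesteps the key lemma the paper actually cites. The paper's (one-sentence) proof explicitly invokes Theorem~\ref{lin isoper ineq}, the equivalence $f_{n-1}^X \approx F_{n-1}^{\mathrm{Curr}}$ between the simplicial and integral-current isoperimetric functions, in order to transfer the linear bound furnished by \cite{KK20} into the current setting; this is the step that connects the conclusion to the original Kleiner--Lang formulation of higher rank hyperbolicity. You instead work entirely inside the simplicial chain complex: $(CI_n)$ gives $(CI_{n-2})$, the KK20 result (via the independence of the isoperimetric class from the admissible resolution and the fact that $X$ is contractible with cocompact action) gives a linear bound in $\ell^1$-norm, and the bi-Lipschitz comparison $M \sim |\cdot|_{\ell^1}$ on simplicial chains upgrades that to the mass bound demanded by the paper's own simplicial definition of rank-$(n-1)$ hyperbolicity, with the dimension constraint $n\geq 2$ handled by observing $\mathbb{Z}$ is the only $\PD_1$ group. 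Since the whole point of the appendix is precisely that these two formulations agree, your more elementary route is valid; what it buys is directness with respect to the paper's simplicial definition, while what the paper's route buys is an explicit bridge to the integral-current literature in which higher rank hyperbolicity was originally defined.
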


Similarly to \cref{lin isoper ineq}, one can prove that coning inequalities do not depend on the chain complex.

\begin{definition}
    A simplicial complex $X$ satisfies an \emph{$n$-simplicial (respectively $n$-integral) coning inequality} if there exists a constant $c$ such that any two points $x,x'\in X$ can be joined by a curve of length $\leq cd(x,x')$, and for $x_0\in X, r>0, k \in\set{ 1, . . . , n}$ and a $k$-simplicial cycle $R$ (respectively $R\in I_k(X), \partial R=0$) such that $\supp(R)\subset B_r(x_0)$ there exists $S\in C_{k+1}(X)$ (respectively $S\in I_{k+1}(X)$) such that $\partial S=R$ and $M(S) \leq c r M(R)$.
\end{definition}

\begin{theorem}
    Let $X$ be a simplicial complex of dimension $n$, each cell endowed with a Euclidean metric such that there are finitely many isometry classes of cells and finitely many types of links. Then $X$ satisfies an $n$-simplicial coning inequality if and only if it satisfies an $n$-integral coning inequality.
\end{theorem}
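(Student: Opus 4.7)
The strategy closely parallels the proof of Theorem~\ref{lin isoper ineq}: apply the deformation theorem (Theorem~\ref{deformation theorem}) to pass between integral and simplicial chains while carefully tracking support. The metric part of the condition (path length $\leq c\, d(x,x')$) is identical in both definitions, so only the filling condition requires work.

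For the direction integral $\Rightarrow$ simplicial, let $R$ be a $k$-simplicial cycle with $\supp(R) \subseteq B_r(x_0)$, $k \le n$. Viewing $R$ as an integral cycle, apply the integral coning inequality to obtain $T \in I_{k+1}(X)$ with $\partial T = R$ and $M(T) \le c r M(R)$. Decompose $T = P + R' + \partial S$ via the deformation theorem, with $P$ a $(k+1)$-simplicial chain. The support bound $\supp(R') \subseteq \Hull(\supp(\partial T)) = \Hull(\supp(R)) \subseteq X^{(k)}$ combined with the fact that $R'$ is a $(k+1)$-current forces $R' = 0$ for dimensional reasons, exactly as in Theorem~\ref{lin isoper ineq}. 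Hence $R = \partial P$ with $M(P) \le C M(T) \le C c r M(R)$, giving a simplicial coning inequality with constant $Cc$.

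For the direction simplicial $\Rightarrow$ integral, let $T \in I_k(X)$ be an integral cycle with $\supp(T) \subseteq B_r(x_0)$. Decompose $T = P + R' + \partial S$; since $\partial T = 0$, the mass bounds of the deformation theorem force $R' = 0$ and $\partial P = 0$, so $P$ is a simplicial $k$-cycle with $M(P) \le C M(T)$. The finite-isometry-types assumption yields a constant $D$, depending only on the maximal diameter of a cell, with $\Hull(B_r(x_0)) \subseteq B_{r+D}(x_0)$, so $\supp(P) \subseteq B_{r+D}(x_0)$. Apply the simplicial coning inequality to $P$ in the ball of radius $r+D$ to produce $U \in C_{k+1}(X)$ with $\partial U = P$ and $M(U) \le c(r+D) M(P) \le cC(r+D) M(T)$. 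Then $\partial(U + S) = P + \partial S = T$ and $M(U+S) \le [cC(r+D) + C] M(T)$.

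The main obstacle will be absorbing this last expression into a clean bound $c' r M(T)$ uniformly in $r>0$. For $r$ bounded below by any fixed constant this is immediate, but for small $r$ the additive term $CM(T)$ threatens to dominate. My plan to handle this is to use the finiteness of isometry classes of cells and of link types to produce a uniform threshold $r_0 > 0$ such that for $r < r_0$ the ball $B_r(x_0)$ sits inside a controlled neighbourhood in which the local metric is uniformly bi-Lipschitz to a fixed Euclidean model; then a direct cone from $x_0$ over $T$ yields an integral filling of mass $\le c' r M(T)$ by the classical Euclidean coning estimate, and the deformation-theoretic argument above handles the complementary range $r \ge r_0$. Patching the two constants together produces the uniform coning constant.
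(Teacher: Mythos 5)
Your proposal is correct and follows essentially the same route as the paper's proof: deformation theorem plus simplicial coning for $r \geq r_0$, and a local Euclidean cone (the paper cites \cite{KL20} Section~2.7 for this) for $r < r_0$, with the additive constant absorbed into $r\,M(T)$ precisely because $r$ is bounded below on the first range. You are in fact slightly more careful than the paper in one spot: you track that $\supp(P) \subseteq \Hull(B_r(x_0)) \subseteq B_{r+D}(x_0)$ and apply the simplicial coning inequality at radius $r+D$, whereas the paper silently applies it at radius $r$, so your bookkeeping is a small improvement that still absorbs into the same final constant.
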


The proof uses the same trick as the proof of \cref{lin isoper ineq}.

\begin{proof}
Assume that $X$ satisfies an $n$-integral coning inequality with constant $c$ and let $R$ be a $k$-simplicial cycle for some $1\leq k\leq n$ with $\supp(R)\subseteq B_r(x_0)$. Then, identifying $R$ with its image in $I_k(X)$ and using the $n$-integral coning inequality, we get a filling $T\in I_{k+1}(X)$ satisfying $M(T)\leq crM(R)$. By \cref{deformation theorem} and as explained in the proof of the $F_n^\text{Curr}\succcurlyeq f_n$ case, we have $T=P+\partial S$ with $P$ a simplicial $(k+1)$-cycle, and $\partial P=\partial T=R$, $M(P)\leq CM(T)\leq CcrM(R)$, so $X$ satisfies an $n$-simplicial coning inequality with constant $cC$.

Assume that $X$ satisfies an $n$-simplicial coning inequality with constant $c$. As there are finitely many isometry classes of cells, there exists $r_0>0$ such that for every $x\in X,r\leq r_0$ $B_r(x)$ is isometric to the cone over the link of $x$. There exists $L>0$ such that the homotopies from $B_r(x)$ to $x$ along geodesics are $L$-Lipschitz. As explained in \cite{KL20} Section 2.7, there exists $D>0$ such that for every $x\in X$ we have that $B_{r_0}(x)$ satisfies an $n$-integral coning inequality with constant $D$.

Let $T\in I_k(X), \partial T=0$ for some $1\leq k\leq n$ with $\supp(T)\subseteq B_r(x_0)$. Then, if $r<r_0$ we have that there exists $V\in I_{k+1}(X)$ such that $\partial V=T$, $M(V)\leq DrM(T)$; otherwise, by \cref{deformation theorem} and as explained in the proof of the $f_n\succcurlyeq F_n^\text{Curr}$ case, we have $T=P+\partial S$ with $P$ a simplicial $k+1$-cycle. By the $n$-simplicial coning inequality we get a filling of $P$, $V\in I_{k+1}(X)$, satisfying $M(V)\leq crM(P)$. So $\partial (V+S)=T$ and \begin{equation*}
    \begin{split}
        M(V+S) &\leq M(V)+M(S)\leq crM(P)+CM(T) \\& 
        \leq cCrM(T)+CM(T)
        =cCrM(T)+r_0\frac{C}{r_0}M(T) \\&
        \leq cCrM(T)+r\frac{C}{r_0}M(T)=(cC+\frac{C}{r_0})rM(T).
    \end{split}
\end{equation*}
so $X$ satisfies an $n$-integral coning inequality with constant $\max\{cC+\frac{C}{r_0},D\}$.
\end{proof}

\AtNextBibliography{\scriptsize}
\printbibliography

@article {bradyhyperbolic,
    AUTHOR = {Brady, Noel},
     TITLE = {Branched coverings of cubical complexes and subgroups of
              hyperbolic groups},
   JOURNAL = {J. London Math. Soc. (2)},
  FJOURNAL = {Journal of the London Mathematical Society. Second Series},
    VOLUME = {60},
      YEAR = {1999},
    NUMBER = {2},
     PAGES = {461--480},
   MRCLASS = {20F67 (20F05 20F65 57M07)},
  MRNUMBER = {1724853},
MRREVIEWER = {Michel Coornaert},
       DOI = {10.1112/S0024610799007644},
    %ISSN = {0024-6107},
}

@article{PetVan,
AUTHOR = {Petrosyan, Nansen and Vankov, Vladimir},
TITLE = {Hyperbolicity and bounded-valued cohomology},
JOURNAL = {Math. Ann.},
YEAR = {2024},
DOI = {10.1007/s00208-024-02871-3}
}

@article {MA20,
    AUTHOR = {Arora, Shivam and Martínez-Pedroza, Eduardo},
     TITLE = {Subgroups of word hyperbolic groups in rational dimension 2},
   JOURNAL = {Groups Geom. Dyn.},
  FJOURNAL = {Groups, Geometry, and Dynamics},
    VOLUME = {15},
      YEAR = {2021},
    NUMBER = {1},
     PAGES = {83--100},
   MRCLASS = {20F67 (20F65 20J05 57M60 57S30)},
  MRNUMBER = {4235748},
MRREVIEWER = {Wenyuan\ Yang},
       DOI = {10.4171/ggd/592},
       %URL = {https://doi.org/10.4171/ggd/592},
    %ISSN = {1661-7207,1661-7215},
}

@article {Ger,
    AUTHOR = {Gersten, S. M.},
     TITLE = {Subgroups of word hyperbolic groups in dimension {$2$}},
   JOURNAL = {J. London Math. Soc. (2)},
  FJOURNAL = {Journal of the London Mathematical Society. Second Series},
    VOLUME = {54},
      YEAR = {1996},
    NUMBER = {2},
     PAGES = {261--283},
   MRCLASS = {20F32 (20F06 57M07)},
  MRNUMBER = {1405055},
MRREVIEWER = {Athanase\ Papadopoulos},
       DOI = {10.1112/jlms/54.2.261},
       %URL = {https://doi.org/10.1112/jlms/54.2.261},
    %ISSN = {0024-6107,1469-7750},
}

@article {KK20,
    AUTHOR = {Kielak, Dawid and Kropholler, Peter},
     TITLE = {Isoperimetric inequalities for {P}oincar\'{e} duality groups},
   JOURNAL = {Proc. Amer. Math. Soc.},
  FJOURNAL = {Proceedings of the American Mathematical Society},
    VOLUME = {149},
      YEAR = {2021},
    NUMBER = {11},
     PAGES = {4685--4698},
   MRCLASS = {20J06 (57P10)},
  MRNUMBER = {4310095},
MRREVIEWER = {Alexander\ I.\ Suciu},
       DOI = {10.1090/proc/15596},
       %URL = {https://doi.org/10.1090/proc/15596},
    %ISSN = {0002-9939,1088-6826},
}

@incollection {Gromov1987,
    AUTHOR = {Gromov, M.},
     TITLE = {Hyperbolic groups},
 BOOKTITLE = {Essays in group theory},
    SERIES = {Math. Sci. Res. Inst. Publ.},
    VOLUME = {8},
     PAGES = {75--263},
 PUBLISHER = {Springer, New York},
      YEAR = {1987},
   MRCLASS = {20F32 (20F06 20F10 22E40 53C20 57R75 58F17)},
  MRNUMBER = {919829},
MRREVIEWER = {Christopher\ W.\ Stark},
       DOI = {10.1007/978-1-4613-9586-7\_3},
       %URL = {https://doi.org/10.1007/978-1-4613-9586-7_3},
    %ISBN = {0-387-96618-8},
}

@article {Bowditch95,
    AUTHOR = {Bowditch, B. H.},
     TITLE = {A short proof that a subquadratic isoperimetric inequality
              implies a linear one},
   JOURNAL = {Michigan Math. J.},
  FJOURNAL = {Michigan Mathematical Journal},
    VOLUME = {42},
      YEAR = {1995},
    NUMBER = {1},
     PAGES = {103--107},
   MRCLASS = {20F32 (57M50)},
  MRNUMBER = {1322192},
MRREVIEWER = {Nadia\ Benakli},
       DOI = {10.1307/mmj/1029005156},
       %URL = {https://doi.org/10.1307/mmj/1029005156},
    %ISSN = {0026-2285,1945-2365},
}

@book {BridHäf,
    AUTHOR = {Bridson, Martin R. and Haefliger, Andr\'{e}},
     TITLE = {Metric spaces of non-positive curvature},
    SERIES = {Grundlehren der mathematischen Wissenschaften [Fundamental
              Principles of Mathematical Sciences]},
    VOLUME = {319},
 PUBLISHER = {Springer-Verlag, Berlin},
      YEAR = {1999},
     PAGES = {xxii+643},
   MRCLASS = {53C23 (20F65 53C70 57M07)},
  MRNUMBER = {1744486},
MRREVIEWER = {Athanase\ Papadopoulos},
       DOI = {10.1007/978-3-662-12494-9},
       %URL = {https://doi.org/10.1007/978-3-662-12494-9},
    %ISBN = {3-540-64324-9},
}

@article {BasWenYou,
    AUTHOR = {Basso, Giuliano and Wenger, Stefan and Young, Robert},
     TITLE = {Undistorted fillings in subsets of metric spaces},
   JOURNAL = {Adv. Math.},
  FJOURNAL = {Advances in Mathematics},
    VOLUME = {423},
      YEAR = {2023},
     PAGES = {Paper No. 109024, 54},
   MRCLASS = {53C23 (49Q15)},
  MRNUMBER = {4577269},
MRREVIEWER = {Debanjan\ Nandi},
       DOI = {10.1016/j.aim.2023.109024},
       %URL = {https://doi.org/10.1016/j.aim.2023.109024},
    %ISSN = {0001-8708,1090-2082},
}

@article {GoLa,
    AUTHOR = {Goldhirsch, Tommaso and Lang, Urs},
     TITLE = {Characterizations of higher rank hyperbolicity},
   JOURNAL = {Math. Z.},
  FJOURNAL = {Mathematische Zeitschrift},
    VOLUME = {305},
      YEAR = {2023},
    NUMBER = {1},
     PAGES = {Paper No. 13, 26},
   MRCLASS = {53C23 (20F67 51F30)},
  MRNUMBER = {4632957},
MRREVIEWER = {Tushar\ Das},
       DOI = {10.1007/s00209-023-03339-x},
       %URL = {https://doi.org/10.1007/s00209-023-03339-x},
    %ISSN = {0025-5874,1432-1823},
}

@article {Lang11,
    AUTHOR = {Lang, Urs},
     TITLE = {Local currents in metric spaces},
   JOURNAL = {J. Geom. Anal.},
  FJOURNAL = {Journal of Geometric Analysis},
    VOLUME = {21},
      YEAR = {2011},
    NUMBER = {3},
     PAGES = {683--742},
   MRCLASS = {49Q15},
  MRNUMBER = {2810849},
       DOI = {10.1007/s12220-010-9164-x},
       %URL = {https://doi.org/10.1007/s12220-010-9164-x},
    %ISSN = {1050-6926,1559-002X},
}

@article {AlonsoWangPride,
    AUTHOR = {Alonso, J. M. and Wang, X. and Pride, S. J.},
     TITLE = {Higher-dimensional isoperimetric (or {D}ehn) functions of
              groups},
   JOURNAL = {J. Group Theory},
  FJOURNAL = {Journal of Group Theory},
    VOLUME = {2},
      YEAR = {1999},
    NUMBER = {1},
     PAGES = {81--112},
   MRCLASS = {20M20 (20F38 57M07)},
  MRNUMBER = {1670329},
MRREVIEWER = {Wim\ Malfait},
       DOI = {10.1515/jgth.1999.008},
       %URL = {https://doi.org/10.1515/jgth.1999.008},
    %ISSN = {1433-5883,1435-4446},
}

@book {brown,
    AUTHOR = {Brown, Kenneth S.},
     TITLE = {Cohomology of groups},
    SERIES = {Graduate Texts in Mathematics},
    VOLUME = {87},
 PUBLISHER = {Springer-Verlag, New York-Berlin},
      YEAR = {1982},
     PAGES = {x+306},
   MRCLASS = {20-02 (18-01 20F32 20J05 55-01)},
  MRNUMBER = {672956},
MRREVIEWER = {Ross\ Staffeldt},
DOI={https://doi.org/10.1007/978-1-4684-9327-6},
NOTE={2012 reprint.}
%ISBN = {0-387-90688-6},
}

@article {Rips,
    AUTHOR = {Rips, E.},
     TITLE = {Subgroups of small cancellation groups},
   JOURNAL = {Bull. London Math. Soc.},
  FJOURNAL = {The Bulletin of the London Mathematical Society},
    VOLUME = {14},
      YEAR = {1982},
    NUMBER = {1},
     PAGES = {45--47},
   MRCLASS = {20F06 (03D40 20F05)},
  MRNUMBER = {642423},
MRREVIEWER = {N. D. Gupta},
       DOI = {10.1112/blms/14.1.45},
%ISSN = {0024-6093},
}

@article {ClaudioPy,
    AUTHOR = {Llosa Isenrich, Claudio and Py, Pierre},
     TITLE = {Subgroups of hyperbolic groups, finiteness properties and
              complex hyperbolic lattices},
   JOURNAL = {Invent. Math.},
  FJOURNAL = {Inventiones Mathematicae},
    VOLUME = {235},
      YEAR = {2024},
    NUMBER = {1},
     PAGES = {233--254},
   MRCLASS = {20F67 (20F65 20J05 32J27 57M07)},
  MRNUMBER = {4688705},
       DOI = {10.1007/s00222-023-01223-3},
       %URL = {https://doi.org/10.1007/s00222-023-01223-3},
    %ISSN = {0020-9910,1432-1297},
}

@book {Epstein,
    AUTHOR = {Epstein, David B. A. and Cannon, James W. and Holt, Derek F.
              and Levy, Silvio V. F. and Paterson, Michael S. and Thurston,
              William P.},
     TITLE = {Word processing in groups},
 PUBLISHER = {Jones and Bartlett Publishers, Boston, MA},
      YEAR = {1992},
     PAGES = {xii+330},
      ISBN = {0-86720-244-0},
   MRCLASS = {20F10 (03D40 20-02 68Q70)},
  MRNUMBER = {1161694},
MRREVIEWER = {Richard\ M.\ Thomas},
}

@unpublished{isenrich2021hyperbolic,
    author = {Claudio Llosa Isenrich and Bruno Martelli and Pierre Py},
    title = {Hyperbolic groups containing subgroups of type $\mathscr{F}_{3}$ not $\mathscr{F}_{4}$},
    note = {arXiv:2112.06531},
    url = {https://arxiv.org/abs/2112.06531}
}

@article {IMM,
    AUTHOR = {Italiano, Giovanni and Martelli, Bruno and Migliorini, Matteo},
     TITLE = {Hyperbolic 5-manifolds that fiber over {$S^1$}},
   JOURNAL = {Invent. Math.},
  FJOURNAL = {Inventiones Mathematicae},
    VOLUME = {231},
      YEAR = {2023},
    NUMBER = {1},
     PAGES = {1--38},
   MRCLASS = {57K50 (57K32)},
  MRNUMBER = {4526820},
MRREVIEWER = {Jim\ Fowler},
       DOI = {10.1007/s00222-022-01141-w},
       %URL = {https://0-doi-org.pugwash.lib.warwick.ac.uk/10.1007/s00222-022-01141-w},
    %ISSN = {0020-9910,1432-1297},
}

@article {KL20,
    AUTHOR = {Kleiner, Bruce and Lang, Urs},
     TITLE = {Higher rank hyperbolicity},
   JOURNAL = {Invent. Math.},
  FJOURNAL = {Inventiones Mathematicae},
    VOLUME = {221},
      YEAR = {2020},
    NUMBER = {2},
     PAGES = {597--664},
   MRCLASS = {20F65 (53C23 57M50)},
  MRNUMBER = {4121159},
MRREVIEWER = {Loreno\ Heer},
       DOI = {10.1007/s00222-020-00955-w},
       %URL = {https://doi.org/10.1007/s00222-020-00955-w},
    %ISSN = {0020-9910,1432-1297},
}

@article {FedFlem,
    AUTHOR = {Federer, Herbert and Fleming, Wendell H.},
     TITLE = {Normal and integral currents},
   JOURNAL = {Ann. of Math. (2)},
  FJOURNAL = {Annals of Mathematics. Second Series},
    VOLUME = {72},
      YEAR = {1960},
     PAGES = {458--520},
   MRCLASS = {28.80 (53.45)},
  MRNUMBER = {123260},
MRREVIEWER = {L.\ C.\ Young},
       DOI = {10.2307/1970227},
       %URL = {https://doi.org/10.2307/1970227},
    %ISSN = {0003-486X},
}

@article {WiltonLimit,
    AUTHOR = {Wilton, Henry},
     TITLE = {Hall's theorem for limit groups},
   JOURNAL = {Geom. Funct. Anal.},
  FJOURNAL = {Geometric and Functional Analysis},
    VOLUME = {18},
      YEAR = {2008},
    NUMBER = {1},
     PAGES = {271--303},
   MRCLASS = {20F65 (20E05 20E26)},
  MRNUMBER = {2399104},
MRREVIEWER = {Vassilis\ Metaftsis},
       DOI = {10.1007/s00039-008-0657-8},
       %URL = {https://0-doi-org.pugwash.lib.warwick.ac.uk/10.1007/s00039-008-0657-8},
    %ISSN = {1016-443X,1420-8970},
}

@book {BRSFinGen,
    AUTHOR = {Brady, Noel and Riley, Tim and Short, Hamish},
     TITLE = {The geometry of the word problem for finitely generated
              groups},
    SERIES = {Advanced Courses in Mathematics. CRM Barcelona},
      NOTE = {Papers from the Advanced Course held in Barcelona, July 5--15,
              2005},
 PUBLISHER = {Birkh\"{a}user Verlag, Basel},
      YEAR = {2007},
     PAGES = {x+206},
   MRCLASS = {20F65 (20-06 20F67 20J05 57M07)},
  MRNUMBER = {2281936},
}

@article {WengerIsoConing,
    AUTHOR = {Wenger, S.},
     TITLE = {Isoperimetric inequalities of {E}uclidean type in metric
              spaces},
   JOURNAL = {Geom. Funct. Anal.},
  FJOURNAL = {Geometric and Functional Analysis},
    VOLUME = {15},
      YEAR = {2005},
    NUMBER = {2},
     PAGES = {534--554},
   MRCLASS = {53C23},
  MRNUMBER = {2153909},
MRREVIEWER = {St\'{e}phane\ Sabourau},
       DOI = {10.1007/s00039-005-0515-x},
       %URL = {https://0-doi-org.pugwash.lib.warwick.ac.uk/10.1007/s00039-005-0515-x},
    %ISSN = {1016-443X,1420-8970},
}

@article {Canary,
    AUTHOR = {Canary, Richard D.},
     TITLE = {A covering theorem for hyperbolic {$3$}-manifolds and its
              applications},
   JOURNAL = {Topology},
  FJOURNAL = {Topology. An International Journal of Mathematics},
    VOLUME = {35},
      YEAR = {1996},
    NUMBER = {3},
     PAGES = {751--778},
   MRCLASS = {57M50 (30F40 57N10)},
  MRNUMBER = {1396777},
MRREVIEWER = {Ken-ichi\ Ohshika},
       DOI = {10.1016/0040-9383(94)00055-7},
       %URL = {https://0-doi-org.pugwash.lib.warwick.ac.uk/10.1016/0040-9383(94)00055-7},
    %ISSN = {0040-9383},
}

@article {FlemMart,
    AUTHOR = {Fleming, Joshua W. and Martínez-Pedroza, Eduardo},
     TITLE = {Finiteness of homological filling functions},
   JOURNAL = {Involve},
  FJOURNAL = {Involve. A Journal of Mathematics},
    VOLUME = {11},
      YEAR = {2018},
    NUMBER = {4},
     PAGES = {569--583},
   MRCLASS = {20F65 (16P99 20J05 57M07)},
  MRNUMBER = {3778913},
MRREVIEWER = {Philippe\ Malbos},
       DOI = {10.2140/involve.2018.11.569},
       %URL = {https://doi.org/10.2140/involve.2018.11.569},
    %ISSN = {1944-4176,1944-4184},
}
\end{document}